\documentclass[11pt,a4paper,oneside]{article}
\newcommand{\ignore}[1]{}
\usepackage{graphicx} 
\usepackage{amsmath}
\usepackage{amssymb}
\usepackage{amsthm}

\usepackage{color}

\newtheorem{thm}{Theorem}[section]\theoremstyle{plain}
\newtheorem{theorem}[thm]{Theorem}\theoremstyle{plain}
\theoremstyle{plain}
\newtheorem{lemma}[thm]{Lemma}\theoremstyle{plain}
\theoremstyle{plain}

\theoremstyle{plain}
\newtheorem{claim}[thm]{Claim}\theoremstyle{plain}
\newtheorem{corollary}[thm]{Corollary}\theoremstyle{plain}
\theoremstyle{plain}
\theoremstyle{plain}
\theoremstyle{plain}
\theoremstyle{plain}
\newtheorem{conjecture}[thm]{Conjecture}\theoremstyle{plain}
\theoremstyle{plain}

\theoremstyle{definition}
\theoremstyle{plain}

\theoremstyle{definition}
\theoremstyle{plain}

\newcommand{\RR}{{\mathbb R}}

\newcommand{\rank}{{\rm rank }}
\newcommand{\R}{{\mathbb R}}

\newcommand{\glc}{{\rm glc}}
\newcommand {\cR}{{\cal R}}

\title{\bf Characterizing globally linked pairs in 
graphs}

\author{
Tibor Jord{\'a}n\thanks{Department of Operations Research,
ELTE E\"otv\"os Lor\'and University, and the 
HUN-REN-ELTE Egerv\'ary Research Group on Combinatorial Optimization,
P\'azm\'any P\'eter s\'et\'any 1/C, 1117 Budapest, Hungary.
e-mail: {\tt tibor.jordan@ttk.elte.hu}} 
\and
Shin-ichi Tanigawa
\thanks{Department of Mathematical Informatics, Graduate School of Information Science and Technology, University of Tokyo, 7-3-1 Hongo, Bunkyo-ku, 113-8656, Tokyo, Japan.
e-mail: {\tt tanigawa@mist.i.u-tokyo.ac.jp}}
}

\date{March 26, 2026}

\begin{document}

\maketitle

	\begin{abstract}
    A pair $\{u,v\}$ of vertices is said to be globally linked in 
    a $d$-dimensional framework $(G,p)$ if there exists no other
    framework $(G,q)$ with the same edge lengths, in which the
    distance between the points corresponding to $u$ and $v$ 
    is different from that in $(G,p)$. 
    We say that $\{u,v\}$ is globally linked in $G$ in $\R^d$ if
    $\{u,v\}$ is globally linked in every generic $d$-dimensional framework $(G,p)$.

    We give a complete combinatorial characterization of globally linked
    vertex pairs in graphs in $\R^2$, solving a 
    conjecture of Jackson, Jord\'an and Szabadka from 2006 in the affirmative.
    Our result provides a refinement of the characterization of globally rigid graphs in $\R^2$ as well as an efficient algorithm for finding the globally linked pairs in a graph. We can also deduce that globally linked pairs in $\R^2$, globally linked pairs in ${\mathbb C}^2$, and stress-linked pairs in ${\mathbb R}^2$ are all the same,
    settling conjectures of Jackson and Owen, and Garamv\"olgyi, respectively.
    In higher dimensions we determine the 
    globally linked pairs in body-bar graphs in $\R^d$, for all $d\geq 1$, verifying
    a conjecture of Connelly, Jord\'an and Whiteley.
	\end{abstract}
	

\section{Introduction}

We briefly introduce the basic notions of combinatorial rigidity theory. 
Let $d \geq 1$ be an integer. A \emph{(bar-and-joint) framework} in $\RR^d$ is a pair $(G,p)$, where $G = (V,E)$ is a graph and $p : V \rightarrow \RR^d$ is a function that maps the vertices of $G$ into Euclidean space. We also say that $(G,p)$ is a \emph{realization} of $G$ in $\RR^d$. Two realizations $(G,p)$ and $(G,q)$ are \emph{equivalent} if the edge lengths coincide in the two frameworks, that is, if $||p(u) - p(v)|| = ||q(u) - q(v)||$ for every edge $uv \in E$, where $||.||$ denotes the Euclidean norm in $\R^d$. 
The realizations are \emph{congruent} if $||p(u) - p(v)|| = ||q(u) - q(v)||$ holds for all pairs of vertices $u,v \in V$. A framework $(G,p)$ in $\RR^d$ is \emph{globally rigid} if every equivalent framework $(G,q)$ in $\RR^d$ is congruent to $(G,p)$. As a local counterpart, we define $(G,p)$ to be \emph{rigid} if there is some $\varepsilon > 0$ such that every equivalent framework $(G,q)$ in $\RR^d$ such that $||p(v) - q(v)|| < \varepsilon$ for all $v \in V$ is congruent to $(G,p)$. 

A framework $(G,p)$ is \emph{generic} if the (multi)set of coordinates of $p(v),v \in V$ is algebraically independent over $\mathbb{Q}$. It is known that for a given dimension $d \geq 1$, the rigidity and global rigidity of generic realizations of $G$ in $\RR^d$ are determined by $G$ itself, see \cite{AR,Con,GHT}.
We say that $G$ is \emph{rigid} in $\RR^d$ (or {\it $d$-rigid}, for short)
if every, or equivalently, if some generic realization of $G$ in $\RR^d$ is rigid.
Similarly, we say that $G$ is
\emph{globally rigid} in $\RR^d$ (or {\it globally $d$-rigid})
if every, or equivalently, if some generic realization of $G$ in $\RR^d$ is globally rigid.

It follows from the definitions  that globally rigid graphs are rigid. 
The following stronger necessary conditions of global rigidity 
 are due to Hendrickson \cite{hend}.
We say that a graph is \textit{redundantly rigid} in $\mathbb{R}^d$ if it
remains rigid in $\RR^d$ after deleting any edge. A graph is \emph{$k$-connected} for some positive integer $k$ if it has at least $k+1$ vertices and it remains connected after
deleting any set of less than $k$ vertices.

\begin{theorem}\cite{hend}\label{theorem:hendrickson}
Let $G$ be a graph on $n \geq d+2$ vertices for some $d \geq 1$. Suppose that $G$ is globally rigid in $\RR^d$. Then $G$ is $(d+1)$-connected and redundantly rigid in $\RR^d$.
\end{theorem}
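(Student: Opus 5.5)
We prove the contrapositive of each conclusion separately, taking a generic framework $(G,p)$ in $\RR^d$ with $n\ge d+2$ vertices. In each case we build an equivalent but non-congruent framework $(G,q)$. Rigidity facts are only needed through the rigidity matrix and the fact that generic rigidity is a property of the graph.

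\emph{Connectivity.} Suppose $G$ is not $(d+1)$-connected. If $G$ is disconnected, we reflect one component in a hyperplane (or translate it slightly). This changes the distance between two vertices in different components, since $p$ is generic. Otherwise there is a separator $S$ with $|S|\le d$ such that $G-S=A\cup B$ with $A,B\neq\emptyset$ and no edges between $A$ and $B$. The points $p(S)$ lie in some affine hyperplane $H$. We keep $p$ on $S\cup B$ and replace $p$ on $A$ by its mirror image in $H$. Every edge lies inside $S\cup A$ or inside $S\cup B$, and both reflections and the identity are isometries fixing $H$ pointwise, so all edge lengths are preserved. Now choose $a\in A$ and $b\in B$. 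Since $|S|+2\le d+2\le n$ and $p$ is generic, neither $p(a)$ nor $p(b)$ lies on $H$; for $|S|<d$ we choose $H$ through $p(S)$ generically so that this holds. Then $\|p(a)-p(b)\|\neq\|\sigma(p(a))-p(b)\|$, where $\sigma$ is the reflection in $H$. The two values are equal only if $p(b)$ lies on the perpendicular bisector of $p(a)$ and $\sigma(p(a))$, which is $H$ itself. Hence $(G,p)$ is not globally rigid.

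\emph{Redundant rigidity.} We may assume $G$ is rigid, since global rigidity implies rigidity. Suppose some edge $e=uv$ has $G-e$ not rigid. Then, because $p$ is generic, $(G-e,p)$ has a non-trivial finite flex. Consider the configuration space $M$ of realizations of $G-e$ equivalent to $(G,p)$, taken modulo congruences. Since $p$ is a regular point of the edge function of $G-e$, the space $M$ is a smooth manifold of dimension $1$ near $p$. Its compact component $C$ containing $p$ is therefore a closed curve (a circle), and compactness holds because $G-e$ is connected. The function $f(q)=\|q(u)-q(v)\|^2$ is smooth on $C$. Because $e$ is independent of the edges of $G-e$ in the rigidity matrix, $f$ has nonzero derivative at $p$. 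A smooth function on a circle with a regular point attains every value that point attains at least twice. So there is some $q\in C$ with $q\neq p$ modulo congruence and $f(q)=f(p)$. Then $(G,q)$ is equivalent to $(G,p)$ but not congruent to it, contradicting global rigidity.

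The main obstacle is this last step: the regularity and compactness of $C$. We must show that every point of $C$ is regular, not just $p$, so that $C$ is a smooth $1$-manifold. For this we argue that the set of non-regular points is defined over $\mathbb{Q}$ together with the edge lengths. A generic $p$ then ensures that the edge lengths are generic for the image, and a Sard-type argument shows that the level set contains no singular points. This is Hendrickson's original argument, and it needs care to factor out the Euclidean group, for example by pinning a $d$-simplex of vertices.
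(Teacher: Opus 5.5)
Your proof is correct. It is Hendrickson's original argument. For connectivity you reflect through a hyperplane containing a separator of size at most $d$. For redundancy you observe that the configuration space of $G-e$ modulo congruences is a compact $1$-manifold on which the squared length of $e$ has nonzero derivative at $p$, so it takes the value at $p$ a second time.

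The paper does not reprove Theorem \ref{theorem:hendrickson}; it only cites it. The one place it touches the statement is the corollary following Theorem \ref{thm:main}, which recovers the redundant rigidity half for $d=2$ by a different route:
\begin{itemize}
\item Suppose $xy$ is an $\cR_2$-bridge of a $2$-rigid $G$ with $|V|\geq 4$.
\item Then there is a pair $\{u,v\}\neq\{x,y\}$ lying in no common rigid subgraph of $G-xy$, hence in no common $\cR_2$-connected subgraph of $G$.
\item Theorem \ref{thm:main} then yields a generic realization in which $\{u,v\}$ is not globally linked.
\item Since global rigidity is a generic property \cite{Con,GHT}, this contradicts the global rigidity of $G$.
\end{itemize}
That route gives a sharper, pairwise certificate: an explicit pair whose distance can change. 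But it is confined to $d=2$, rests on all the machinery behind Theorem \ref{thm:main} plus the genericity theorem, and does not address $3$-connectivity.

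Your argument works in every dimension and directly for every generic $p$. Its only nontrivial analytic input is the one you flagged: $f_{G-e}(p)$ is a regular value for generic $p$. This is exactly the fact \cite[Proposition 2.32]{GHT} that the paper itself invokes in the proof of Lemma \ref{lem:flex}.

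Two details should be tightened.
\begin{itemize}
\item You need $G-e$ connected for compactness. This follows from your first half: $G$ is $(d+1)$-connected, hence $2$-connected. For $d=1$ this even shows the redundancy case cannot occur.
\item If ``pinning a $d$-simplex'' means fixing the positions of $d+1$ vertices, that is not legitimate. Their mutual distances need not stay constant along the fiber of $G-e$. Instead, observe that every point of the fiber has rank $dn-\binom{d+1}{2}-1$. When $n\geq d+2$ this exceeds the bound $(d-1)n-\binom{d}{2}$ valid for frameworks lying in a hyperplane. So the Euclidean group acts freely and properly on the fiber, and the quotient is a smooth compact $1$-manifold. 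Alternatively, normalize a flag of vertices.
\end{itemize}
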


For $d=1,2$ the conditions of Theorem \ref{theorem:hendrickson} are, in fact, sufficient for global rigidity. 
It is well-known that a graph is globally rigid in $\R^1$ if and only if it is $2$-connected (see e.g. \cite[Theorem 63.2.6]{JW}).
The characterization of $2$-dimensional global rigidity is as follows. 
The notion of ${\cal R}_2$-connectivity (precisely defined in the next section) is central in the theory of global rigidity. 
Roughly speaking, a graph $G$ has this property if 
an associated matroid, defined on the edge set of $G$, cannot be
written as the direct sum of two matroids.

\begin{theorem} \cite{JJconnrig}
\label{theorem:2dimgloballyrigid}
Let $G$ be a graph on at least four vertices.
The following assertions are equivalent.\\
(a) $G$ is globally rigid in $\R^2$,\\
(b) $G$ is $3$-connected and redundantly rigid in $\R^2$,\\
(c)
$G$ is $3$-connected and $\mathcal{R}_2$-connected.
\end{theorem}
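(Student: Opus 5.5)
The plan is to prove the cycle of implications (a)$\Rightarrow$(b)$\Rightarrow$(c)$\Rightarrow$(a). Two of the three steps are soft, and the real content is the sufficiency direction.

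\emph{(a)$\Rightarrow$(b).} This is exactly Theorem \ref{theorem:hendrickson} with $d=2$, since $G$ has at least four vertices.

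\emph{(b)$\Leftrightarrow$(c).} This is a purely matroidal statement about the two-dimensional rigidity matroid ${\cal R}_2$, whose independent sets are characterized by Laman's count $|F|\le 2|V(F)|-3$.

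For (c)$\Rightarrow$(b), suppose $G$ is ${\cal R}_2$-connected. Then every edge lies in a circuit of ${\cal R}_2(G)$, so deleting an edge does not lower the rank. Moreover, a connected matroid on the edges of a graph spans all of its vertices. Using the fact that circuits of ${\cal R}_2$ are rigid, and that the union of two rigid subgraphs sharing at least two vertices is rigid, we can chain overlapping circuits. Together with $3$-connectivity (which ensures the relevant overlaps are large enough), this shows that $G$ has rank $2|V|-3$ and stays rigid after any edge deletion.

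For (b)$\Rightarrow$(c), take the ${\cal R}_2$-components of a redundantly rigid graph. Each is redundantly rigid, hence rigid. If there were two or more of them, then counting ranks over the components, whose pairwise intersections have at most one vertex, would contradict $r(E)=2|V|-3$ whenever the graph is $2$-connected. This is the standard Jackson--Jordán counting argument, and I would cite or reprove it via the rank formula $r = \sum_i (2|V_i|-3)$ over the components.

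\emph{(c)$\Rightarrow$(a).} This is the main obstacle. I would use an inductive construction. First, show that every $3$-connected ${\cal R}_2$-connected graph other than $K_4$ can be reduced to a smaller graph of the same type, either by deleting an edge or by performing an inverse $1$-extension: split off a degree-$3$ vertex and add an edge between two of its neighbours. This combinatorial reduction lemma is the hard part. It requires analyzing ``critical'' sets and showing that some degree-$3$ vertex admits an admissible splitting preserving both $3$-connectivity and ${\cal R}_2$-connectivity, or else that some edge is deletable. It follows that every such graph is obtained from $K_4$ by edge additions and $1$-extensions.

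Second, show that both operations preserve generic global rigidity in $\R^2$. Edge addition preserves it trivially. For $1$-extensions I would invoke Connelly's stress-matrix sufficiency criterion: a generic framework with an equilibrium stress of rank $n-3$ is globally rigid. The extension step then lifts a maximal-rank stress from the smaller framework to the extended one by placing the new vertex on the removed edge and perturbing; this is Connelly's $1$-extension lemma. Since $K_4$ is globally rigid, induction completes the proof.
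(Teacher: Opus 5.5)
Your overall plan matches the standard proof in \cite{JJconnrig}, which the paper cites rather than reproves. That proof uses Hendrickson for (a)$\Rightarrow$(b), a rank count over ${\cal R}_2$-components for (b)$\Leftrightarrow$(c), and, for (c)$\Rightarrow$(a), the Jackson--Jord\'an reduction of $3$-connected ${\cal R}_2$-connected graphs to $K_4$ combined with Connelly's stress results.

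\textbf{The step (b)$\Rightarrow$(c) fails as written.} The rank count does \emph{not} give a contradiction for $2$-connected graphs. Take three copies of $K_4$ on $\{a,c,p,q\}$, $\{a,b,r,s\}$ and $\{b,c,t,w\}$. Their union $G$ is $2$-connected and rigid (three rigid bodies pinned in a triangle), so $r_2(G)=2\cdot 9-3=15=3\cdot 5$. Hence ${\cal R}_2(G)$ is the direct sum of the three $K_4$-matroids, and $G$ is not ${\cal R}_2$-connected. Yet every edge lies in a $K_4$ and $K_4-e$ is rigid, so $G$ is redundantly rigid.

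The missing ingredient is $3$-connectivity. You invoked it in (c)$\Rightarrow$(b), where it is not needed: all circuits through a fixed edge share its two end-vertices, so their union is rigid. You then dropped it exactly where it is essential. Here is the repair. A redundantly rigid graph has no ${\cal R}_2$-bridges. So by Lemma \ref{mconnbasic} its components $H_1,\dots,H_q$ are rigid, have $n_i\geq 4$ vertices, and pairwise share at most one vertex. If $q\geq 2$, $3$-connectivity forces each $H_i$ to have $y_i\geq 3$ vertices of attachment; otherwise these would separate a private vertex of $H_i$ from $V-V(H_i)\neq\emptyset$. Setting $x_i=n_i-y_i$, you get
\[
2|V|-3=r_2(G)=\sum_{i=1}^q(2n_i-3)=\Bigl(2\sum_{i=1}^q x_i+\sum_{i=1}^q y_i\Bigr)+\sum_{i=1}^q y_i-3q\geq 2|V|,
\]
a contradiction. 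This is exactly the computation in the proof of Lemma \ref{3comps}.

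\textbf{The direction (c)$\Rightarrow$(a) is a roadmap, not a proof.} The reduction lemma says that unless $G=K_4$ there is always an edge deletion, or an inverse $1$-extension at a degree-$3$ vertex, preserving both $3$-connectivity and ${\cal R}_2$-connectivity. That lemma is the entire technical content of \cite{JJconnrig}. It is obtained there by a long analysis based on ear decompositions of ${\cal R}_2$-connected graphs and admissible splittings at nodes, and ``analyzing critical sets'' does not supply it.

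If you instead cite it, state the induction invariant correctly. You must carry ``has a generic realization with a stress matrix of rank $|V|-3$'', not mere global rigidity. Connelly's $1$-extension lemma also needs this stress to be nonzero on the edge being split. That holds for a generic stress in the stress space, because every intermediate graph is ${\cal R}_2$-connected, so the split edge lies in an ${\cal R}_2$-circuit.
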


For $d\geq 3$ the conditions of Theorem \ref{theorem:hendrickson}, together, are no longer
sufficent to imply global rigidity and the combinatorial characterization of globally rigid graphs in these dimensions is a major open question. 

In this paper we shall focus on a refinement of global rigidity, defined as follows.
Let $(G,p)$ be a framework in $\RR^d$. 
Following \cite{JJS}, we define a pair of vertices $\{u,v\}$ to be \emph{globally linked} in $(G,p)$ if for every equivalent framework $(G,q)$ in $\RR^d$, we have $||p(u) - p(v)|| = ||q(u) - q(v)||$. Thus, $(G,p)$ is globally rigid if and only if every pair of vertices is globally linked in $(G,p)$. In contrast with global rigidity, this is not, in general, a generic property: it may happen that (in a given dimension $d\geq 2$) $\{u,v\}$ is globally linked in some generic realizations and not globally linked in others, see \cite[Figs. 1 and 2]{JJS}. We define the pair $\{u,v\}$ to be \emph{globally linked in $G$ in $\RR^d$} (or
{\it globally $d$-linked}) if $\{u,v\}$ is globally linked in {\em every} generic realization of $G$ in $\RR^d$. 

One may also consider vertex pairs which are
globally linked in {\em some} generic realization of $G$ in $\R^d$.
These pairs are called {\it weakly globally $d$-linked} in $G$.
A combinatorial characterization of weakly globally 2-linked pairs
can be found  in \cite{JV}.

Global linkedness is well-understood for $d=1$. For a graph $G$ and two vertices $u,v\in V(G)$ we use $\kappa_G(u,v)$ to denote the maximum number of pairwise
internally vertex-disjoint $u$-$v$ paths in $G$. It can be shown that $\{u,v\}$ is globally 1-linked in $G$ if and only if $uv$ is an edge of $G$ or $\kappa_G(u,v) \geq 2$.
Finding a characterization of globally 2-linked pairs (and an efficient algorithm for testing
global 2-linkedness)
has been one of the last remaining major open problems of
combinatorial rigidity theory in $\R^2$. 
We give a complete solution in this paper (Theorem \ref{thm:MAIN} below). Our result
gives an affirmative answer to \cite[Conjecture 5.9]{JJS}
and settles several other conjectures of this area.

Global rigidity and 
globally linked pairs can be defined in a similar manner in complex
frameworks $(G,p)$, where $p:V\to {\mathbb C}^d$ is a complex
realization of $G$. 
It is known that
generic global rigidity in $\R^d$ and ${\mathbb C}^d$ are
the same, see \cite{GT,JO}.
Jackson and Owen \cite{JO} proved that, unlike in the real setting,
global linkedness is a generic property
in ${\mathbb C}^d$.
They also characterized complex global linkedness in ${\cal R}_2$-connected
graphs (where the characterization is the same as in the real case, c.f. Theorem \ref{mconn})
and conjectured that global linkedness in $\R^2$ and ${\mathbb C}^2$
are the same, see \cite[Conjecture 5.4]{JO}. 

The following stress-based analogue of globally linked pairs was introduced 
by Garamv\"olgyi \cite{G}. We refer the reader to \cite{G} for the precise definition.
Roughly speaking, a pair $\{u,v\}$ of vertices of a graph $G$ is \emph{$d$-stress-linked in $G$} 
if for every generic framework $(G,p)$ in $\R^d$ and every framework $(G,q)$ 
for which every equilibrium stress of $(G,p)$ is also an equilibrium stress of $(G,q)$,
we have that every equilibrium stress of $(G+uv,p)$ is an equilibrium stress of $(G+uv,q)$.
In particular, adjacent pairs of vertices are $d$-stress linked in $G$. 
It was shown in \cite[Theorems 4.2 and 4.7]{G} that $d$-stress-linked pairs are globally $d$-linked, and
that a graph $G$ is globally $d$-rigid if and only
if $\{u,v\}$ is $d$-stress linked in $G$ for all pairs
$u,v\in V(G)$. 
Furthermore, a characterization of 2-stress linked pairs was also obtained \cite[Theorem 4.15]{G}.
A conjecture of the same paper \cite[Conjecture 6.1]{G}
suggests that
$d$-stress-linked and globally $d$-linked pairs are
the same in $\R^d$ for all $d\geq 1$.

Our main result is the following theorem, which
shows that the three notions mentioned above are indeed all the same. It
solves each of the corresponding conjectures
as well as three related conjectures, see \cite[Conjectures 5.12, 5.13]{JJS} and
\cite[Conjecture 3.13]{JJS2}.

\begin{theorem}
\label{thm:MAIN} 
Let $G=(V,E)$ be a graph and $u,v\in V$.  Then the following are equivalent:\\
(a) $\{u, v\}$ is globally linked in $G$ in $\R^2$,\\
(b) $\{u,v\}$ is globally  linked in $G$ in ${\mathbb C}^2$,\\
(c) $\{u,v\}$ is 2-stress-linked in $G$,\\
(d) either $uv \in E$ or there is an $\mathcal{R}_2$-connected subgraph $H = (V',E')$ of $G$ with $u,v \in V'$ and $\kappa_H(u,v) \geq 3$.
\end{theorem}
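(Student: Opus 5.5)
The plan is to prove the cycle of implications (d) $\Rightarrow$ (c) $\Rightarrow$ (a) $\Rightarrow$ (d), and then bring (b) in separately.

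\textbf{(d) $\Rightarrow$ (c) and (c) $\Rightarrow$ (a).} For (d) $\Rightarrow$ (c), I would invoke Garamv\"olgyi's characterization of 2-stress-linked pairs \cite[Theorem 4.15]{G}. If that characterization is phrased differently, I would argue directly instead. Adjacent pairs are stress-linked by definition. Otherwise, in the ${\cal R}_2$-connected subgraph $H$ with $\kappa_H(u,v)\ge 3$, the pair $\{u,v\}$ is globally linked in $H$ by the known ${\cal R}_2$-connected case (Theorem \ref{mconn}, due to Jackson, Jord\'an and Szabadka). The stress version of this argument, via the $3$-connected ${\cal R}_2$-connected components in the sense of Theorem \ref{theorem:2dimgloballyrigid}, would transfer the stress from $H$ to $G$. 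The implication (c) $\Rightarrow$ (a) is \cite[Theorem 4.2]{G}.

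\textbf{(a) $\Rightarrow$ (d).} This is the main content and the main obstacle. Suppose $uv\notin E$ and no ${\cal R}_2$-connected subgraph $H\ni u,v$ has $\kappa_H(u,v)\ge 3$. I would show that some generic realization $(G,p)$ admits an equivalent $(G,q)$ with $\|q(u)-q(v)\|\ne\|p(u)-p(v)\|$. Because non-global-linkedness need only be exhibited in one generic realization, the plan is to proceed by induction on $|V|+|E|$ using reductions.

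\begin{itemize}
\item If $u,v$ lie in no common ${\cal R}_2$-component, or are not even linked in the rigidity matroid, then a continuous flex changes the $u$--$v$ distance.
\item Otherwise, take the ${\cal R}_2$-component (or union of components) $H$ containing $u,v$ that is relevant to linkedness. The hypothesis then gives a separator $\{a,b\}$ of $H$ separating $u$ from $v$, in the maximal relevant subgraph.
\item I would reflect one side across the line through $p(a),p(b)$. The difficulty is that the rest of $G$, namely the parts outside $H$ that are attached through other ${\cal R}_2$-components, may obstruct the reflection. I would handle this with a careful structural lemma, which is the crux: after replacing each ${\cal R}_2$-component by a complete graph on its vertex set, a ``$2$-separation'' exists that is compatible with all components.
\item For the remaining cases, where the obstruction comes from rigid but non-redundant parts, I would flex the framework to a new generic position first and then reflect. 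This uses the fact that non-redundant edges allow motion, so the flexed framework stays equivalent while the reflection becomes available.
\end{itemize}

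I expect the hardest part to be this structural decomposition: a minimal counterexample analysis showing that, when no ${\cal R}_2$-connected subgraph has three internally disjoint $u$--$v$ paths, one can find either a flex or a $2$-separation reflection in some generic realization.

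\textbf{Bringing in (b).} For (d) $\Rightarrow$ (b), I would use the Jackson--Owen characterization in ${\cal R}_2$-connected graphs \cite{JO}, together with the fact that global linkedness in a subgraph implies it in the whole graph. For (b) $\Rightarrow$ (a), I would argue that a generic real counterexample $(G,q)$ is also a complex one, since complex global linkedness is a generic property. This closes the equivalence of all four statements.
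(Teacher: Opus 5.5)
Your handling of the cited implications is fine and agrees with the paper. (c)$\Leftrightarrow$(d) is Garamv\"olgyi's result. (d)$\Rightarrow$(b) is Jackson--Owen plus the fact that global linkedness passes to supergraphs. (b)$\Rightarrow$(a) holds because a generic real counterexample is a generic complex one. The gap is in (a)$\Rightarrow$(d), which is the whole new content of the theorem.

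Your first bullet is false. That $u,v$ lie in no common $\cR_2$-component does not give a flex changing their distance, because $G$ may be 2-rigid. Take two disjoint copies of $K_4$ joined by three pairwise disjoint edges. Let $u$ be the vertex of the first copy not incident with a joining edge, and let $v$ be any vertex of the second copy. Then $r_2(G)=13=2\cdot 8-3$ and the joining edges are $\cR_2$-bridges. No $\cR_2$-connected subgraph contains both $u$ and $v$, so (d) fails. Yet every generic realization is rigid and has no flexing at all. The ambiguity here is discrete, and it is created precisely by $\cR_2$-bridges.

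The same phenomenon defeats your reflection plan. Bridges can run across every 2-separation, so the ``compatible 2-separation'' lemma you postulate does not exist in general. ``Flex first, then reflect'' also gives no mechanism for re-satisfying the bridge constraints after the motion. Since the crux is left unspecified, the hard direction is essentially unproved.

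The missing idea is the paper's Theorem~\ref{lem:geo}, used inside a vertex-deletion induction that never reflects anything.

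\begin{enumerate}
\item The paper adds bridges and edges inside $\cR_2$-blocks, which preserves the failure of (d). After this, $G$ is 2-rigid with complete $\cR_2$-blocks.
\item A rank count via Lemma~\ref{mconnbasic}(c) and Lemma~\ref{hinge} yields at least three \emph{reducible} vertices. These lie in exactly one $\cR_2$-block and on at most one bridge, or in no block and on at most three bridges. So one of them, $y$, avoids $u,v$.
\item By induction $\{u,v\}$ is not globally linked in $G-y$, and this lifts to $G$. If $N_G(y)$ is a clique, this is direct. If $y$ lies in no block, use Lemma~\ref{lem0ext} or Theorem~\ref{lem1ext} (1-extension on a bridge). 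If $y$ lies in one block and on one bridge $xy$, use Theorem~\ref{lem:geo}.
\end{enumerate}

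In that last case, pin $N_G(y)-\{x\}$ to the same positions in both equivalent realizations $p,q$ of $G-y$. Because $xy$ is a bridge, some pair $\{w,x\}$ with $w\in N_G(y)-\{x\}$ is not linked in $G-y$. Hence $q$ has a flexing $\phi$ fixing $N_G(y)-\{x\}$ and moving $x$. The perpendicular bisectors of $p(x)$ and $\phi(t)(x)$ sweep an open set. Placing $y$ at a generic point $a^*$ of that set, in both $(G-y,p)$ and $(G-y,\phi(t'))$, gives two equivalent realizations of $G$. The first is generic, and they have different $u$--$v$ distances. This is how a continuous motion of a flexible subframework is turned into a discrete ambiguity of a rigid framework, which is exactly what your sketch needs and lacks.
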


The essential new step, which is the main contribution of this paper is the proof of (a)$\to $(d).
The implications (d)$\to$(a) and (d)$\to$(b) were proved in \cite{JJS} and 
\cite{JO}, respectively, while (b)$\to$(a) follows from the definitions.
Furthermore, the equivalence of (c) and (d) 
was shown in \cite{G}.

We remark that the natural extension of (d)$\to$(a) 
fails in $d \geq 3$ dimensions, as 
there exist $(d+1)$-connected and $\cR_d$-connected graphs which are
not globally $d$-rigid, see \cite{JKT}.
However, it is still possible that the higher dimensional version of the
implication (a)$\to$(d)
holds in these dimensions, see \cite[Conjecture 5.2]{GJ}.

We use Theorem \ref{thm:MAIN} to deduce the characterization of the
globally 2-linked clusters in a graph $G$, which are the maximal vertex sets in which
all pairs of vertices are globally 2-linked in $G$, and point out a link
between these clusters
and a
conjectured characterization of global 3-rigidity.
The new methods developed in this paper can also be used to characterize globally
$d$-linked pairs for all $d\geq 1$ in special families of graphs.
We shall illustrate this by providing a characterization of 
the globally $d$-linked pairs in body-bar graphs. 
This result confirms a conjecture of Connelly, Jord\'an and Whiteley \cite{CJW}.


The structure of the paper is as follows.
Section \ref{sec:pre}
contains the basic definitions and results of rigidity theory
we shall need. Section \ref{sec:mco} contains old and new structural results 
concerning ${\cal R}_2$-connected graphs. Previous results on flexings of frameworks
as well as one of our key results, which gives a new method of using flexings to obtain
equivalent but non-congruent frameworks, are in Section \ref{sec:fle}.
In Section \ref{sec:mai} we prove Theorem \ref{thm:MAIN} by showing that
property (d) is indeed a necessary condition of
global 2-linkedness. We consider the globally 2-linked clusters and the
$d$-dimensional body-bar graphs in Sections \ref{sec:clu} and \ref{sec:bod}, respectively.
Concluding remarks, including a brief discussion of the algorithmic aspects, are in
Section \ref{sec:con}.

\section{Preliminaries}\label{sec:pre}

The rigidity matroid of a graph $G$ is a matroid defined on the edge set
of $G$ which reflects the rigidity properties of all generic realizations of
$G$.
For a general introduction to matroid theory we refer the reader to \cite{oxley}. For a detailed treatment of the $2$-dimensional rigidity matroid, see \cite{Jmemoirs}.

Let $(G,p)$ be a realization of a graph $G=(V,E)$ in $\RR^d$.
The \emph{ rigidity matrix} of the framework $(G,p)$
is the matrix $R(G,p)$ of size
$|E|\times d|V|$, where, for each edge $uv\in E$, in the row
corresponding to $uv$,
the entries in the $d$ columns corresponding to vertices $u$ and $v$ contain
the $d$ coordinates of
$(p(u)-p(v))$ and $(p(v)-p(u))$, respectively,
and the remaining entries
are zeros. 
The rigidity matrix of $(G,p)$ defines
the \emph{rigidity matroid}  of $(G,p)$ on the ground set $E$
by linear independence of rows of the
rigidity matrix. It is known that any pair of generic frameworks
$(G,p)$ and $(G,q)$ have the same rigidity matroid.
We call this the $d$-dimensional \emph{rigidity matroid}
${\cal R}_d(G)$
of the graph $G$, and denote its rank function 
%
%
by $r_d$. For a subgraph $H$ of $G$ we shall use $r_d(H)$ to mean $r_d(E(H))$. 
A graph $G=(V,E)$ is \emph{$\cR_d$-independent} if $r_d(G)=|E|$ and it is an \emph{$\cR_d$-circuit} if it is not $\cR_d$-independent but every proper 
subgraph $G'$ of $G$ is $\cR_d$-independent. 
An edge $e$ of $G$ is an \emph{$\cR_d$-bridge in $G$}
if  $r_d(G-e)=r_d(G)-1$ holds. Equivalently, $e$ is an $\cR_d$-bridge in $G$ if it is not contained in any subgraph of $G$ that is an $\cR_d$-circuit.
A pair $\{u,v\}$ of vertices is \emph{linked} in $G$ in $\R^d$ (or $d$-linked)  if $r_d(G+uv)=r_d(G)$ holds. By basic matroid theory, this is equivalent to the existence of an $\cR_d$-circuit in $G+uv$ containing the edge $uv$.

The following characterization of rigid graphs is due to Gluck.

\begin{theorem}\label{theorem:gluck}
\cite{Gluck}
\label{combrigid}
Let $G=(V,E)$ be a graph with $|V|\geq d+1$. Then $G$ is rigid in $\RR^d$
if and only if $r_d(G)=d|V|-\binom{d+1}{2}$.
\end{theorem}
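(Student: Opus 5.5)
The statement is Gluck's theorem: for $|V|\geq d+1$, $G$ is rigid in $\RR^d$ if and only if $r_d(G)=d|V|-\binom{d+1}{2}$. I plan to prove it in two steps. First I bound the kernel of the rigidity matrix at a generic point. Then I show that, for generic $p$, infinitesimal rigidity and rigidity coincide.

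\textbf{Step 1: the kernel of $R(G,p)$.} Fix a generic $p$. The kernel of $R(G,p)$ is the space of infinitesimal motions. It always contains the infinitesimal isometries $x\mapsto Ax+b$ with $A$ skew-symmetric, that is, the assignments $v\mapsto Ap(v)+b$. When the points $p(v)$ affinely span $\RR^d$, which holds generically once $|V|\geq d+1$, the map $(A,b)\mapsto (Ap(v)+b)_{v\in V}$ is injective. So these isometries form a subspace of dimension $\binom{d}{2}+d=\binom{d+1}{2}$, and
\[
r_d(G)=\rank R(G,p)\leq d|V|-\binom{d+1}{2},
\]
with equality exactly when $(G,p)$ is infinitesimally rigid.

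\textbf{Step 2: generic rigidity equals infinitesimal rigidity.} I use the edge-length map $f_G:\RR^{d|V|}\to\RR^{|E|}$, $f_G(q)=(\|q(u)-q(v)\|^2)_{uv\in E}$, whose Jacobian is $2R(G,q)$. Let $k$ be the maximum rank of $R(G,q)$ over all $q$.

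Since $p$ is generic, $\rank R(G,p)=k$: a minor vanishing at $p$ would be a rational polynomial relation among the coordinates, so it vanishes identically. The set of regular points is open, so the rank is constant and equal to $k$ on a neighbourhood of $p$. The constant rank theorem then shows that, locally, $f_G^{-1}(f_G(p))$ is a manifold of dimension $d|V|-k$.

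The same argument applied to the complete graph $K_V$ shows that, near $p$, the frameworks congruent to $(G,p)$ form a manifold of dimension $\binom{d+1}{2}$. This uses that the $p(v)$ are in general position, so that $K_V$ is infinitesimally rigid at $p$ and its matrix has rank $d|V|-\binom{d+1}{2}$ there.

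The congruent configurations lie inside $f_G^{-1}(f_G(p))$. Hence $(G,p)$ is rigid if and only if the two manifolds coincide near $p$, which happens if and only if their dimensions agree, that is, $k=d|V|-\binom{d+1}{2}$. Combined with Step 1, this says $(G,p)$ is rigid if and only if $r_d(G)=d|V|-\binom{d+1}{2}$.

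\textbf{Main obstacle.} The delicate direction is the implication that a rigid generic framework has full rank. It needs two facts:
\begin{itemize}
\item the congruence orbit is a submanifold of the fibre of dimension exactly $\binom{d+1}{2}$;
\item equal dimensions force local equality of the two manifolds. This is the invariance-of-domain type step: an embedded submanifold of equal dimension is open in the ambient one.
\end{itemize}
If the fibre has larger dimension, every neighbourhood of $p$ contains non-congruent equivalent frameworks, so $(G,p)$ is flexible. Since the rank condition depends only on $G$ and not on the choice of generic $p$, the statement follows.
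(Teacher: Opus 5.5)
Your proof is correct. The paper gives no proof of this statement; it is quoted from Gluck. Your argument is the standard Asimow--Roth regular-point argument. Its Step~2 is exactly the rigidity predictor the paper quotes as Lemma~\ref{roth}, applied at a generic point. Such a point is a regular point of $f_G$ for the same reason the paper gives in the proof of Lemma~\ref{lem:flex}.

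The only ingredient you assert without proof is that $\rank R(K_V,p)=d|V|-\binom{d+1}{2}$ when the points $p(v)$ affinely span $\RR^d$. This is standard and quick to verify. Take $d+1$ affinely independent points $p(u_0),\dots,p(u_d)$. The edge conditions of $K_V$ on these points force the linear map $p(u_i)-p(u_0)\mapsto w(u_i)-w(u_0)$ to be skew-symmetric. So any infinitesimal motion $w$ agrees with a trivial motion on these $d+1$ points. A motion vanishing on $d+1$ affinely independent points vanishes everywhere, which gives the rank claim.
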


We shall need three previous results concerning globally 2-linked pairs.
The first one characterizes globally 2-linked pairs in 
${\cal R}_2$-connected graphs.

\begin{theorem}\cite{JJS}
\label{mconn}
Let $G=(V,E)$ be an ${\cal R}_2$-connected graph and $x,y\in V$.
Then $\{x,y\}$ is globally 2-linked in $G$ if and only if
$\kappa_G(x,y)\geq 3$.
\end{theorem}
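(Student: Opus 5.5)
We prove the two directions of Theorem \ref{mconn} separately.

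\emph{Necessity.} Suppose $\kappa_G(x,y)\le 2$ and $xy\notin E$. Then there is a set $S$ with $|S|\le 2$ separating $x$ from $y$. Write $G=G_1\cup G_2$ with $V(G_1)\cap V(G_2)=S$, $x\in V(G_1)-S$ and $y\in V(G_2)-S$. Take a generic realization $(G,p)$.
\begin{itemize}
\item If $|S|\le 1$, rotate the part $G_2$ about the point $p(s)$, or translate it if $S=\emptyset$. For a generic $p$, a suitable rotation changes $\|p(x)-p(y)\|$.
\item If $|S|=2$, say $S=\{a,b\}$, reflect $p|_{V(G_2)}$ in the line through $p(a)$ and $p(b)$. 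This gives an equivalent framework $(G,q)$. Genericity ensures $p(x)$ is not on that line and $p(y)$ is not at a special position, so $\|q(x)-q(y)\|\ne\|p(x)-p(y)\|$.
\end{itemize}
So $\{x,y\}$ is not globally linked. $\mathcal{R}_2$-connectivity is not needed for this direction.

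\emph{Sufficiency.} This is the substantial part. I would argue by induction on $|V|$, using the ear-decomposition and contraction theory of $\mathcal{R}_2$-connected graphs (equivalently, $\mathcal{R}_2$-circuits) from \cite{JJconnrig}. The argument goes in three steps.

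First, if $G$ is $3$-connected, then Theorem \ref{theorem:2dimgloballyrigid} shows that $G$ is globally rigid, so every pair is globally linked.

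Otherwise $G$ has a $2$-separation $\{a,b\}$. By $\mathcal{R}_2$-connectivity, $ab$ is linked in each side. One then shows that each side $G_i+ab$ is again $\mathcal{R}_2$-connected; this is the standard $2$-sum decomposition of $\mathcal{R}_2$-connected graphs into $3$-connected $\mathcal{R}_2$-connected pieces and cycle-like pieces. Since $\kappa_G(x,y)\ge 3$, the vertices $x$ and $y$ cannot be separated by $\{a,b\}$. So both lie in the same side, say $G_1$ (possibly with one of them equal to $a$ or $b$). Moreover, $\kappa_{G_1+ab}(x,y)\ge 3$, because any path leaving $G_1$ through $a$ and returning through $b$ can be replaced by the edge $ab$.

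By induction, $\{x,y\}$ is globally linked in $G_1+ab$. It remains to transfer this back to $G$. In any generic realization of $G$, the pair $\{a,b\}$ is globally linked in $G_2$. The reason is that $G_2+ab$ is $\mathcal{R}_2$-connected and $\kappa_{G_2+ab}(a,b)\ge 3$ (they are adjacent there), so induction applies with the edge case handled as trivially globally linked. Alternatively, one can use the fact that for $\mathcal{R}_2$-connected $2$-sums the glued edge is recovered. Hence any framework equivalent to $(G,p)$ restricts to one equivalent to $(G_1+ab,p)$, which gives global linkedness of $\{x,y\}$.

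\emph{Main obstacle.} The delicate point is the claim that $\{a,b\}$ is globally linked in $G_2$ when $G_2$ alone is not $\mathcal{R}_2$-connected. The clean way around this is to prove the stronger inductive statement for the pair $\{a,b\}$ via $G_2+ab$: the length of $ab$ is determined by the edges of $G_2$ whenever $ab$ lies in a circuit of $G_2+ab$ and $\kappa_{G_2}(a,b)\ge 2$. This requires care, and I would handle it through the $\mathcal{R}_2$-connected $2$-sum structure, where each piece is globally rigid or a cycle. I expect this transfer step to need the most work.
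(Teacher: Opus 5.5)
The paper does not reprove this theorem; it is quoted from \cite{JJS}, so I assess your argument on its own. Your necessity direction is fine. The reduction in the sufficiency direction is also fine: $x,y$ lie on a common side of the $2$-separator, $G_1+ab$ is $\mathcal{R}_2$-connected by Lemma~\ref{cleaving}, $\kappa_{G_1+ab}(x,y)\geq 3$, and induction applies to $G_1+ab$.

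\textbf{The gap is the transfer back to $G$.} This is not a side issue: when $\{x,y\}=\{a,b\}$ it is the entire statement.
\begin{itemize}
\item Your justification that $\{a,b\}$ is globally linked in $G_2$ is vacuous. Induction on $G_2+ab$ only says something about a pair that is an edge there.
\item The conclusion itself is false in general, and so is your proposed strengthening. Take $G_2=K_4-ab$. Then $G_2+ab=K_4$ is an $\mathcal{R}_2$-circuit and $\kappa_{G_2}(a,b)=2$. Yet reflecting $a$ in the line through the other two vertices gives an equivalent framework with a different $a$--$b$ distance, for every generic realization.
\item So your argument already fails on the $2$-sum of two copies of $K_4$ along $ab$, with $ab$ deleted. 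This is an $\mathcal{R}_2$-circuit with $\kappa(a,b)=4$. Neither side alone fixes $\|p(a)-p(b)\|$, even though both pieces of the decomposition are globally rigid.
\end{itemize}
Incidentally, no cycle-like pieces occur: the 3-blocks of an $\mathcal{R}_2$-connected graph are all 3-connected and $\mathcal{R}_2$-connected.

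\textbf{The missing idea is to use the two sides simultaneously.} You already noted that $\{a,b\}$ is linked in both $G_1$ and $G_2$. What you need is the following lemma: if $V(G_1)\cap V(G_2)=\{a,b\}$ and $\{a,b\}$ is linked in both $G_i$, then $\{a,b\}$ is globally linked in $G_1\cup G_2$.

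Intuitively, each side permits only finitely many values of the $a$--$b$ distance. The spurious values depend on that side's private vertices, so for generic $p$ the two finite sets meet only in the true distance.

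Sketch of a proof:
\begin{itemize}
\item Let $q$ be equivalent to a generic $p$. Linkedness makes $\mu=\|q(a)-q(b)\|^2$ algebraic over $\mathbb{Q}(p(V(G_i)))$ for $i=1,2$.
\item The coordinates of $V(G_1)-\{a,b\}$ and $V(G_2)-\{a,b\}$ are algebraically independent over $\mathbb{Q}(p(a),p(b))$. Hence $\mu$ is algebraic over $\mathbb{Q}(p(a),p(b))$.
\item The polynomial relation between $\mu$ and the edge lengths of $G_1$ then holds identically as the vertices of $V(G_1)-\{a,b\}$ move with $a,b$ fixed.
\item Isometry and scaling invariance turn this into closure of the finite set of admissible squared $ab$-lengths under multiplication by $\mu/\|p(a)-p(b)\|^2$. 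This positive real number must therefore be a root of unity, so $\mu=\|p(a)-p(b)\|^2$.
\end{itemize}
With this lemma, every framework equivalent to $(G,p)$ restricts to one equivalent to $(G_1+ab,p|_{V(G_1)})$, and your induction closes.
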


Let $H=(V,E)$ be a graph. The {\it 0-extension} operation adds a new vertex $z$ to $H$ as
well as two new edges incident with $z$. The {\it 1-extension} operation deletes an edge
$xy$, and adds a new vertex $z$ and three new edges incident with $z$, including $zx$ and $zy$.
The following two lemmas show that these operations preserve the property of
being "not globally 2-linked", at least  in certain cases.

\begin{lemma}\cite{JJS}
\label{lem0ext}
If $\{u,v\}$ is not globally 2-linked in $H$ and $G$ is a
0-extension of $H$, then $\{u,v\}$ is not
globally 2-linked in $G$.
\end{lemma}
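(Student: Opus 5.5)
The plan is to construct, starting from a bad realization of $H$, a generic realization of $G$ in which $\{u,v\}$ is still not globally linked. Let $G$ be obtained from $H$ by adding a new vertex $z$ and two edges $za, zb$ with $a,b\in V(H)$. If $a=b$ the new edges are parallel; the argument below works equally with a single edge, so we may assume $a\ne b$ (and $z\notin\{u,v\}$, since $u,v\in V(H)$).

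\textbf{Starting data.} Since $\{u,v\}$ is not globally 2-linked in $H$, there is a generic realization $(H,p)$ in $\R^2$ and an equivalent realization $(H,q)$ with $\|q(u)-q(v)\|\ne \|p(u)-p(v)\|$.

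\textbf{Placing $z$.} Choose $p(z)$ so that the extended $p$ is generic on $V(G)$. The coordinates of $p(z)$ can then be taken algebraically independent over $\mathbb{Q}(p)$, which is countable, so almost every point works. We need $q(z)\in\R^2$ with
\[
\|q(z)-q(a)\|=\|p(z)-p(a)\| \quad\text{and}\quad \|q(z)-q(b)\|=\|p(z)-p(b)\|,
\]
that is, a real point on the intersection of the circle of radius $r_a=\|p(z)-p(a)\|$ about $q(a)$ and the circle of radius $r_b=\|p(z)-p(b)\|$ about $q(b)$. Such a point exists exactly when
\[
|r_a-r_b|\le \|q(a)-q(b)\|\le r_a+r_b .
\]

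\textbf{The main obstacle.} The difficulty is that $\{a,b\}$ need not be globally linked in $H$, so $\|q(a)-q(b)\|$ may differ from $\|p(a)-p(b)\|$. In $(G,p)$ the triangle inequality only guarantees the condition with $\|p(a)-p(b)\|$ in the middle. The fix is to choose $p(z)$ far away. As $p(z)\to\infty$ along a generic direction, $r_a+r_b\to\infty$. Also, by the triangle inequality, $|r_a-r_b|\le\|p(a)-p(b)\|$, and this difference tends to the projection of $p(a)-p(b)$ onto the direction of $p(z)$.

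Choose the direction of $p(z)$ nearly perpendicular to $p(a)-p(b)$. Then $|r_a-r_b|$ becomes as small as we like, in particular smaller than $\|q(a)-q(b)\|$. This quantity is positive: $q(a)\ne q(b)$ can be arranged, and if $q(a)=q(b)$ one still needs $r_a=r_b$, which is avoided by the following perturbation. Because $(H,p)$ is generic, we may assume the set of $q$ is perturbed so that $q(a)\ne q(b)$; alternatively, handle this degenerate case by replacing $(H,q)$ with a nearby equivalent framework, using that the fibre near $q$ is a real algebraic set.

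So the open set of admissible positions for $p(z)$ (far out, nearly perpendicular to $p(a)-p(b)$) is nonempty. It therefore contains a point generic over $\mathbb{Q}(p)$, and for that point the intersection condition above holds.

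\textbf{Conclusion.} Setting $q(z)$ to be a point of the intersection gives $(G,q)$ equivalent to the generic framework $(G,p)$. Since $u,v\ne z$, the $u$–$v$ distance is unchanged from $(H,q)$, so $\|q(u)-q(v)\|\ne\|p(u)-p(v)\|$. Hence $\{u,v\}$ is not globally 2-linked in $G$.
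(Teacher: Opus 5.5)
The paper does not reprove this lemma; it is quoted from Jackson, Jord\'an and Szabadka. So your argument has to stand on its own, and it has one gap. Your main step is correct. Suppose $q(a)\ne q(b)$. The positions of $p(z)$ with $|r_a-r_b|<\|q(a)-q(b)\|<r_a+r_b$ form an open set. It is nonempty, since it contains far-away points near the perpendicular bisector of $p(a)p(b)$. Hence it contains a point generic over $\mathbb{Q}(p)$, and $q(z)$ can then be placed.

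The gap is the degenerate case $q(a)=q(b)$, which you dismiss with an assertion. This case really occurs. Take $H$ to be the path $a,c,d,b$ and $\{u,v\}=\{a,b\}$. When the three lengths satisfy the triangle inequalities, one admissible witness $q$ folds the path into a triangle, so $q(a)=q(b)$. For such a $q$, no generic choice of $p(z)$ works, since you would need $r_a=r_b$. You must therefore pass to another point of the fibre $F=f_H^{-1}(f_H(p))$.

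The reason you give, that $F$ is a real algebraic set near $q$, does not justify this. A real algebraic set can lie locally inside the linear subspace $L=\{x: x(a)=x(b)\}$. What rules this out is the genericity of $p$. Here is one way to argue it, with $n=|V(H)|$:

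\begin{itemize}
\item Since $f_H(p)$ is a regular value, $F$ is a manifold of dimension $2n-r_2(H)$ near each of its points.
\item The set $F_{\rm bad}=\{x\in F:\|x(u)-x(v)\|\ne\|p(u)-p(v)\|\}$ is a nonempty open subset of $F$.
\item Let $\psi:\R^{2(n-1)}\to L$ be the linear isomorphism identifying $a$ with $b$. Then $F\cap L=\psi(\tilde F)$, where $\tilde F=(f_H\circ\psi)^{-1}(f_H(p))$.
\item Consider the set of values over which $f_H\circ\psi$ has a fibre of dimension larger than $2(n-1)-r_2(H)$. It is semialgebraic and defined over $\mathbb{Q}$. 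By the fibre-dimension formula (Hardt triviality) it has dimension less than $r_2(H)$.
\item $f_H(p)$ cannot lie in this set. Otherwise genericity of $p$ would force all of $f_H(\R^{2n})$ into its Zariski closure, which has dimension less than $r_2(H)$.
\item Hence $\dim(F\cap L)=\dim\tilde F\le \dim F-2$. So $F\cap L$ has empty interior in $F$, and some $q'\in F_{\rm bad}$ satisfies $q'(a)\ne q'(b)$.
\end{itemize}

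Using $q'$ in place of $q$, the rest of your argument goes through unchanged, including the conclusion about the $u$--$v$ distance.
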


\begin{theorem}\cite[Theorem 3.10]{JJS2}
\label{lem1ext}
Let $H=(V,E)$
 be a 2-rigid graph and let $G$ be obtained from $H$
 by a 1-extension on an $\cR_2$-bridge $uw\in E$.
 Suppose that 
 $\{x,y\}$ is not
 globally 2-linked in $H$ for some $x,y\in V$. Then $\{x,y\}$
 is not globally 2-linked in $G$.
\end{theorem}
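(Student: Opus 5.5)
Since $\{x,y\}$ is not globally 2-linked in $H$, there is a generic realization $(H,p)$ and an equivalent realization $(H,q)$ with $\|p(x)-p(y)\|\neq\|q(x)-q(y)\|$. I want to place the new vertex $z$ in both frameworks so that all edge lengths of $G$ agree and $\{x,y\}$ still has different lengths. The edge $uw$ is deleted in $G$, so only the three edges $zu,zw,zt$ need to match. The difficulty is that $q$ need not be generic, and a consistent position for $z$ may not exist. The plan is therefore to replace $(H,p),(H,q)$ by a nearby pair in which the deleted edge $uw$ is allowed to change length.

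\textbf{Step 1: use the bridge to flex.} Since $uw$ is an $\cR_2$-bridge and $H$ is 2-rigid, $H-uw$ is generically flexible with exactly one degree of freedom. Near the generic point $p$, the realization space of $H-uw$ modulo congruences is a smooth 1-dimensional manifold, parametrized locally by the length $\ell=\|p(u)-p(w)\|$. I would need the analogous statement near $q$, which may be non-generic. For this I will use the standard fact (Connelly / Jackson--Jord\'an--Szabadka) that if $p$ is generic and $q$ is equivalent to $p$, then $R(H,q)$ has the same rank as $R(H,p)$. This should follow from the equivalence of the edge-function images at $p$ and $q$, since $p$ generic makes $f_H(p)$ a generic point of the measurement variety. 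Consequently $uw$ is also a bridge in the rigidity matroid of $(H,q)$, and $(H-uw,q)$ also flexes smoothly with $\|q(u)-q(w)\|$ as a local parameter. Varying $\ell$ in a small interval gives two continuous families $p_\ell,q_\ell$ of equivalent realizations of $H-uw$, with $p_\ell$ generic for generic $\ell$. By continuity, $\|p_\ell(x)-p_\ell(y)\|\neq\|q_\ell(x)-q_\ell(y)\|$ for $\ell$ near the original value.

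\textbf{Step 2: attach $z$.} In $(H-uw,p_\ell)$ put $z$ at a generic point $p(z)$, with all three edge lengths $\|p(z)-p(u)\|$, $\|p(z)-p(w)\|$, $\|p(z)-p(t)\|$ prescribed. I must find $q(z)$ and a parameter $\ell'$ such that $(H-uw,q_{\ell'})$ plus $q(z)$ realizes these same three lengths. Count: $q(z)$ has 2 unknowns and $\ell'$ has 1, against 3 equations, so this is a 0-dimensional problem. The real difficulty is ensuring a real solution. I would choose $p(z)$ on the line through $p(u),p(w)$, or use the classical trick from the 1-extension proof of global rigidity: first take $z$ collinear with $u,w$, which forces $\ell$ (the length $uw$) to be determined by $|zu|,|zw|$. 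The matching $q(z)$ then lies on the segment $q(u)q(w)$. Then perturb to general position using the implicit function theorem, whose Jacobian is nonsingular because $uw$ is a bridge and $t\notin\{u,w\}$ contributes an independent row.

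\textbf{Step 3: genericity and conclusion.} After the perturbation, the frameworks $(G,p')$ and $(G,q')$ are equivalent, $(G,p')$ can be chosen generic since the construction is open, and the $xy$-distances still differ by continuity. Hence $\{x,y\}$ is not globally 2-linked in $G$.

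The main obstacle is Step 2, namely ensuring a real placement of $z$ in the $q$-family that matches the lengths from the generic $p$-placement. The collinear-degeneration-plus-implicit-function argument is the step I would try first, checking carefully that the Jacobian is nonsingular at the possibly non-generic $q$, using the rank equality from Step 1.
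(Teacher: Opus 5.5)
The paper does not prove this statement; it quotes it from [JJS2], so I judge your argument on its own.

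\textbf{The gap is in Step 2.} Step 1 is essentially fine: it is the regular-value argument from the proof of Lemma~\ref{lem:flex}, applied to both $H$ and $H-uw$. Step 2, which you rightly single out as the crux, fails as written. The collinear configuration is not a solution of your three equations, so the implicit function theorem has no base point. Put $z$ at $(1-\lambda)u+\lambda w$ in both frameworks, with $\ell'=\ell$, which is what placing $q(z)$ on the segment $q(u)q(w)$ forces. This matches $|zu|$ and $|zw|$, but says nothing about $|zt|$. By Stewart's identity $|zt|^2=(1-\lambda)|ut|^2+\lambda|wt|^2-\lambda(1-\lambda)\ell^2$, the two squared $zt$-lengths differ by $(1-\lambda)A+\lambda B$. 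Here $A$ and $B$ are the differences of $|ut|^2$ and of $|wt|^2$ between $(H,p)$ and $(H,q)$. Since $\{u,t\}$ and $\{w,t\}$ need not be globally linked, this is nonzero for all but at most one $\lambda$, and often no admissible $\lambda$ exists:
\begin{itemize}
\item if $ut\in E(H)$, then $A=0$, so for $B\neq 0$ the only root is $\lambda=0$, i.e.\ $z=u$;
\item if $A=B\neq 0$, there is no root at all.
\end{itemize}
The collinear trick you recall comes from the proof that 1-extensions preserve infinitesimal rigidity. It controls the rank of a Jacobian, and your rank claim at a collinear point is indeed correct when $q(t)$ is off the line $q(u)q(w)$. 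But it does not produce an equivalent placement of $z$.

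\textbf{What is missing is a genuine real base solution plus a transversality argument.} A base solution does exist at $\ell'=\ell$. Move $(H,q)$ by an isometry so that $q(u)=p(u)$ and $q(w)=p(w)$. If $q(t)=p(t)$ for one of the two such isometries, put $z$ at the same generic point in both frameworks and you are done. Otherwise, put $z$ at the same point $a$ of the perpendicular bisector of $p(t)q(t)$ in both frameworks; this matches all three lengths. However, such points $a$ lie on lines determined algebraically by $p$, so the framework $(G,p')$ obtained this way is never generic. You must still show that, as the $q$-side flexes (when $u,w$ can no longer be kept aligned), the admissible positions of $z$ sweep an open set. Equivalently, the $3\times 3$ Jacobian of the system in $(q(z),\ell')$, which involves the flex velocity of $q(t)$, must be nonzero at some base point. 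This is the real content of the statement, and your proposal does not address it.

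You also do not need to flex the $p$-side. Keep $p$ fixed, solve for $(\ell',q(z))$ as a function of $p(z)$, and pick $p(z)$ generic over $\mathbb{Q}(p)$ in the resulting open set. This is how the proof of Theorem~\ref{lem:geo} picks $a^*$ in the ball swept by perpendicular bisectors. There, openness is automatic because the flex keeps the aligned vertices fixed, and that is exactly what fails here.
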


\section{Structural properties of $\cR_2$-connected graphs}
\label{sec:mco}

Theorem \ref{theorem:2dimgloballyrigid} shows that global 2-rigidity and 
${\cal R}_2$-connectivity are closely related. The connectivity properties
of ${\cal R}_2(G)$ are also fundamental in the (proof of the) characterization of 
globally 2-linked pairs. This section is devoted to structural results
concerning ${\cal R}_2$-connected (sub)graphs.

Let ${\cal M}$ be a matroid on ground set $E$. 
We can define a relation on the pairs of elements of $E$ by
saying that $e,f\in E$ are
equivalent if $e=f$ or there is a circuit $C$ of ${\cal M}$
with $\{e,f\}\subseteq C$.
This defines an equivalence relation. The equivalence classes are 
the \emph{connected components} of ${\cal M}$.
Thus the connected components of ${\cal M}$ form a partition of $E$.
The matroid is \emph{connected} if there is only one equivalence class.
A graph $G$ is \emph{$\cR_d$-connected} if ${\cal R}_d(G)$ is connected.
The subgraphs of $G$ induced by the (edges of the) connected components of ${\cal R}_2(G)$
are called the {\it ${\cal R}_2$-components} of $G$.
An ${\cal R}_2$-component $H$ is {\it trivial} if $|E(H)|=1$, or equivalently,
if it corresponds to an ${\cal R}_2$-bridge of $G$. Otherwise it is {\it non-trivial}.
Some basic properties of the ${\cal R}_2$-components
are summarized in the next lemma.



\begin{lemma} \cite{JJconnrig,Jmemoirs}
\label{mconnbasic}
Let $G=(V,E)$ be a graph with ${\cal R}_2$-components $H_1,H_2,\dots,H_q$.
Then\\
(a) if $H_i$ is non-trivial, then it is a redundantly 2-rigid, 2-connected induced subgraph of $G$
with $|V(H_i)|\geq 4$, for $1\leq i\leq t,$\\
(b) $|V(H_i)\cap V(H_j)|\leq 1$ for $1\leq i<j\leq t$, and\\
(c) $r_2(G) = \sum_{i=1}^q r_2(H_i)$.
\end{lemma}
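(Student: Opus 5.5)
Lemma~\ref{mconnbasic} collects standard facts about the connected components of ${\cal R}_2(G)$. I would prove (c) first, then (b), and finish with (a), which is the real work.

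Part (c) is pure matroid theory. A matroid is the direct sum of its connected components, and the rank of a direct sum is the sum of the ranks of the summands. Hence $r_2(E)=\sum_i r_2(E(H_i))$.

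For (b), suppose two components $H_i,H_j$ share two vertices $a,b$. Each non-trivial component is $2$-rigid (see (a)), so $\{a,b\}$ is linked in both $H_i$ and $H_j$. Then, using the count of Theorem~\ref{theorem:gluck}, $r_2(H_i\cup H_j)\le 2|V(H_i)\cup V(H_j)|-3$. On the other hand, the components are matroid separators, so $r_2(H_i\cup H_j)=r_2(H_i)+r_2(H_j)=2|V_i|-3+2|V_j|-3$. With $|V_i\cap V_j|\ge 2$ we get $2|V_i\cup V_j|-3\le 2|V_i|+2|V_j|-7$, which is strictly less than this sum, a contradiction. Trivial components are single edges, which are rigid and need only be linked by their endpoints, so the same count works; alternatively, two parallel structures on the same pair already give the contradiction.

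For (a), let $H$ be a non-trivial component, so $|E(H)|\ge 2$.

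\emph{Rigidity.} Any two edges of $H$ lie in a common circuit, and $\cR_2$-circuits are rigid: a circuit $C$ satisfies $|E(C)|=2|V(C)|-2$, and $C-e$ is independent with $2|V(C)|-3$ edges, so Theorem~\ref{theorem:gluck} applies. Start from one circuit and repeatedly add circuits that share an edge, which therefore share at least two vertices. Rigid graphs glued along two or more vertices stay rigid, so $H$ is rigid.

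\emph{Redundant rigidity.} Every edge of $H$ lies in a circuit, so deleting it does not lower the rank, and $H-e$ stays rigid on the same vertex set.

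\emph{Size.} Circuits have $|V|\ge 4$ (the smallest is $K_4$), giving $|V(H)|\ge 4$.

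\emph{$2$-connectivity.} A cut vertex would split $H$ into two parts that meet in one vertex. The rank is additive over such a split, so no circuit crosses it, contradicting connectivity of the matroid.

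\emph{Inducedness.} Let $e=xy$ be an edge of $G$ with $x,y\in V(H)$. Since $H$ is rigid, $e$ is spanned by $E(H)$, so some circuit inside $E(H)+e$ contains $e$ and at least one edge of $H$. Thus $e$ belongs to the component $H$.

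The step I expect to need the most care is the gluing argument for rigidity of $H$: showing that the union of overlapping circuits is rigid, via the rank count when two rigid graphs share at least two vertices. This is a standard submodularity computation.
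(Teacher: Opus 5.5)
The paper gives no proof of this lemma; it quotes it from \cite{JJconnrig,Jmemoirs}. Your argument is correct and is essentially the standard one behind those references:
\begin{itemize}
\item (c) is the direct-sum decomposition of ${\cal R}_2(G)$ into its components.
\item A non-trivial component is rigid because $\cR_2$-circuits are rigid and every edge lies in a circuit through a fixed edge $e_0$.
\item Redundant rigidity, $|V(H)|\ge 4$, 2-connectivity and inducedness then follow as you describe.
\item Your count for (b), $r_2(H_i\cup H_j)\le 2|V_i\cup V_j|-3\le 2|V_i|+2|V_j|-7<r_2(H_i)+r_2(H_j)$, is a clean way to finish.
\end{itemize}

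Three justifications need repair.

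First, the identity $|E(C)|=2|V(C)|-2$ for an $\cR_2$-circuit comes from Laman's theorem, not from Theorem~\ref{theorem:gluck}. This is the genuinely two-dimensional input: in $\R^3$ there are flexible $\cR_3$-circuits, such as the double banana, and the analogue of (a) fails.

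Second, the gluing step is not a submodularity computation. Submodularity, $r_2(A\cup B)+r_2(A\cap B)\le r_2(A)+r_2(B)$, only bounds $r_2(A\cup B)$ from above, whereas you need $r_2(A\cup B)\ge 2|V_A\cup V_B|-3$. Argue instead as follows. A rigid edge set $A$ spans every pair in $V_A$, because the rank on $V_A$ is at most $2|V_A|-3$. Hence $r_2(A\cup B)=r_2(K(V_A)\cup K(V_B))$. The latter contains the independent set obtained from a base of $K(V_A)$ by attaching each vertex of $V_B-V_A$ to two common vertices $a,b\in V_A\cap V_B$. These are 0-extensions, which preserve independence, and the resulting set has $2|V_A\cup V_B|-3$ edges. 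Alternatively: an infinitesimal motion of a generic realization that is trivial on both pieces agrees at two distinct common points, so it is a single infinitesimal isometry.

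Third, the additivity of rank across a cut vertex, which you use for 2-connectivity, needs a similar one-line argument. Motions of the two sides can be matched at the shared vertex by a translation, so the motion-space dimensions add up to give $r_2(H)=r_2(H_1)+r_2(H_2)$.
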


We shall also use the following properties.

\begin{lemma}
\label{newcomps}
Let $G=(V,E)$ be a graph with ${\cal R}_2$-components $H_1,H_2,\dots,H_q$,
and let $u,v\in V$ be a non-adjacent vertex pair with $u,v\in V(H_1)$.
Then the ${\cal R}_2$-components of $G+uv$ are $H_1+uv,H_2,\dots,H_q$.
In particular, the vertex sets of the ${\cal R}_2$-components of
$G$ and $G+uv$ are the same.
\end{lemma}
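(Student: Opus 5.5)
Since $u$ and $v$ are non-adjacent vertices of $H_1$, I will show that $uv$ is not an $\cR_2$-bridge of $G+uv$ and that adding it merges no components.

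\emph{Step 1: $uv$ lies in a circuit together with edges of $H_1$.} By Lemma \ref{mconnbasic}(a), if $H_1$ is non-trivial it is $2$-rigid with $|V(H_1)|\ge 4$. Hence $r_2(H_1+uv)=r_2(H_1)$ by Theorem \ref{theorem:gluck}, since adding an edge cannot raise the rank above $2|V(H_1)|-3$. So $uv$ is spanned by $E(H_1)$, and there is an $\cR_2$-circuit $C\subseteq E(H_1)+uv$ with $uv\in C$. If $H_1$ were trivial, then it would be the single edge $uv$, contradicting non-adjacency. So $H_1$ is non-trivial, and $uv$ is in the same component of ${\cal R}_2(G+uv)$ as some edge of $H_1$, hence as all of $E(H_1)$. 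Here I use that the components of the restriction ${\cal R}_2(G)$ are refined by those of ${\cal R}_2(G+uv)$: every circuit of ${\cal R}_2(G)$ is also a circuit of ${\cal R}_2(G+uv)$.

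\emph{Step 2: no other merging occurs.} This is the main point. Let $H'_1,\dots$ be the components of $G+uv$. Each $H'_j$ is a union of some of the $H_i$, possibly together with $uv$. I use the rank formula Lemma \ref{mconnbasic}(c) for both graphs. First, $r_2(G+uv)=r_2(G)=\sum_i r_2(H_i)$, because $uv$ is spanned by $E(H_1)$. Second, $r_2(G+uv)=\sum_j r_2(H'_j)$. Subadditivity gives $r_2(H'_j)\le \sum_{H_i\subseteq H'_j} r_2(H_i)$, using $r_2(H_1+uv)=r_2(H_1)$ for the component containing $uv$. The two totals are equal, so equality holds in each subadditivity.

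Now I apply the standard matroid fact. If $X$ and $Y$ are disjoint with $r(X\cup Y)=r(X)+r(Y)$, then no circuit meets both $X$ and $Y$, because the restriction to $X\cup Y$ is the direct sum of the restrictions to $X$ and $Y$. Suppose $H'_j$ contained two of the groups, where one group is $H_1+uv$ (which spans nothing new) and the others are single $H_i$. Rank additivity over the groups would then force ${\cal R}_2(H'_j)$ to be a direct sum of the corresponding restrictions. That contradicts the connectivity of $H'_j$.

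Hence the components of $G+uv$ are exactly $H_1+uv,H_2,\dots,H_q$. Since $u,v\in V(H_1)$, adding $uv$ changes no vertex set, which gives the final claim.
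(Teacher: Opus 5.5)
Your proof is correct, but after the common first step it takes a different route from the paper. Both arguments begin the same way: $H_1$ is non-trivial and $2$-rigid, so $uv$ is spanned by $E(H_1)$ and $H_1+uv$ is $\cR_2$-connected.

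The paper then argues locally with circuits. Suppose an $\cR_2$-circuit $C$ of $G+uv$ through $uv$ met an edge $f$ of some $H_i$ with $i\geq 2$. Strong circuit elimination against a circuit $C'\subseteq E(H_1)+uv$ through $uv$ gives a circuit of $G$ containing $f$ and lying inside $(C\cup C')-uv$. This circuit must meet $E(H_1)$, since it cannot be a proper subset of $C$. That contradicts $H_1$ and $H_i$ being distinct components. Circuits avoiding $uv$ are circuits of $G$, so nothing else can merge.

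You instead argue globally with ranks. You use $r_2(G+uv)=r_2(G)$ and the fact that rank is additive over components. Equality in subadditivity then forces each component of ${\cal R}_2(G+uv)$ to be a direct sum of the groups $E(H_1)+uv, E(H_2),\dots,E(H_q)$ it contains, hence to be a single group. In effect you prove the general fact that if $S$ is a separator of $M\setminus e$ and $e\in {\rm cl}_M(S)$, then $S+e$ is a separator of $M$.

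Your route avoids strong circuit elimination and the minimality observation. The cost is a somewhat longer argument that relies on the rank criterion for separators. You state that criterion for two sets; it extends to several groups by splitting off one group at a time, as you implicitly do.

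One small wording point: the partition of $E$ into components of ${\cal R}_2(G)$ \emph{refines} the partition induced by ${\cal R}_2(G+uv)$, not the other way round. Your justification, that circuits of $G$ are circuits of $G+uv$, already establishes the correct direction.
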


\begin{proof}
Since $H_1$ is 2-rigid by Lemma \ref{mconnbasic}(a), $H_1+uv$ is an ${\cal R}_2$-connected
subgraph of $G+uv$.
It suffices to show that there is no ${\cal R}_2$-circuit $C$ in $G+uv$
with $uv\in E(C)$ and $E(C)\cap E(H_i)\not= \emptyset$ for some
$2\leq i\leq q$. Suppose, for a contradiction, that such an ${\cal R}_2$-circuit 
exists and let $f\in E(C)\cap E(H_i)$.
Let $C'$ be an ${\cal R}_2$-circuit in $H_1+uv$ with $uv\in E(C')$.
Then the strong circuit axiom implies that there exists an ${\cal R}_2$-circuit
$C''$ in $G$ with $f\in E(C'')\subseteq (E(C)\cup E(C'))-uv$.
Since $E(C'')\cap E(H_1)\not= \emptyset$, it contradicts the
assumption that $H_1$ and $H_i$ are different
${\cal R}_2$-components of $G$.
\end{proof}

Let $H=(V,E)$ be an ${\cal R}_2$-connected graph with $|V|\geq 4$.
By Lemma \ref{mconnbasic}(a) $H$ is 2-connected. Let
$a,b\in V$. We say that the pair $\{a,b\}$ is a {\it 2-separator} of $H$
if $H-\{a,b\}$ is disconnected.
We say that two 2-separators $\{a,b\}$ and $\{a',b'\}$ of $H$ are {\it crossing},
if $a$ and $b$ are in different components of $H-\{a',b'\}$.
The next lemma is a corollary of \cite[Lemma 3.6]{JJconnrig}.

\begin{lemma} \cite{JJconnrig}
\label{nocross}
Suppose that $H$ is an ${\cal R}_2$-connected graph. Then
there are no crossing 2-separators in $H$.
\end{lemma}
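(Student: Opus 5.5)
Following the paper's remark, this lemma will be deduced from \cite[Lemma 3.6]{JJconnrig}. We first recall what that lemma gives. If $\{a,b\}$ is a 2-separator of an ${\cal R}_2$-connected graph $H$, then $H$ splits at $\{a,b\}$ into pieces $H_1,\dots,H_k$ ($k\geq 2$), one for each component $C_i$ of $H-\{a,b\}$, with $H_i=H[C_i\cup\{a,b\}]$. Each $H_i+ab$ is ${\cal R}_2$-connected. In particular $H_i+ab$ is 2-connected by Lemma \ref{mconnbasic}(a), so every vertex of $C_i$ lies on an $a$--$b$ path inside $H_i$. The proof then argues by contradiction.

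Suppose $\{a,b\}$ and $\{a',b'\}$ cross, i.e.\ $a$ and $b$ lie in different components of $H-\{a',b'\}$. Then $a,b\notin\{a',b'\}$. Apply the decomposition to the separator $\{a',b'\}$, with $a\in C_1$ and $b\in C_2$, where $C_1$ and $C_2$ are distinct components of $H-\{a',b'\}$. Every $a$--$b$ path in $H$ must pass through $a'$ or $b'$.

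Next we show that $a'$ and $b'$ lie in different components of $H-\{a,b\}$, so crossing is symmetric. Assume not, and let $D$ be a component of $H-\{a,b\}$ containing neither $a'$ nor $b'$. Then $D$ is connected in $H-\{a',b'\}$. Since $H$ is 2-connected, $D$ has neighbours both at $a$ and at $b$. Hence $a$, $D$ and $b$ all lie in a single component of $H-\{a',b'\}$, which contradicts $a\in C_1$ and $b\in C_2$. Therefore $a'\in D_1$ and $b'\in D_2$ for distinct components $D_1,D_2$ of $H-\{a,b\}$, and $H-\{a,b\}$ has exactly these two components.

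It follows that $\{a,a',b,b'\}$ is a 4-cycle cut in which each consecutive pair $\{a,a'\}$, $\{a',b\}$, $\{b,b'\}$, $\{b',a\}$ bounds a region, as in the classical crossing-separator picture. Then $V-\{a,b,a',b'\}$ splits into four parts $X_{aa'}, X_{a'b}, X_{bb'}, X_{b'a}$, where each vertex of $X_{aa'}$ has neighbours only in $X_{aa'}\cup\{a,a'\}$, and similarly for the other parts. We now use ${\cal R}_2$-connectivity, and this is the main step. Consider the piece $H_1+a'b'$ of the decomposition at $\{a',b'\}$ containing $a$. It has the 2-separator $\{a,a'\}$ or, failing that, it has a vertex of degree 2. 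This holds because $a$ is joined to $b'$ and $a'$ only through $X_{aa'}\cup X_{b'a}$.

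The cleanest route is a rank count. By the splitting formula from \cite{JJconnrig},
\[
r_2(H)=\sum_i r_2(H_i+ab)-(k-1).
\]
Applying this simultaneously at both separators, together with submodularity across the four regions, the four regions together with the edges $aa'$, $a'b$, $bb'$, $b'a$ would form a 2-sum of four pieces around a 4-cycle. Such a graph is not ${\cal R}_2$-connected, since the added 4-cycle edges would be ${\cal R}_2$-bridges. I would make this precise by showing that some edge of $H$ fails to be in a circuit with an edge from another region. The obstacle is exactly this bookkeeping. If it proves messy, I will instead directly cite \cite[Lemma 3.6]{JJconnrig}, which asserts that the 2-separators of an ${\cal R}_2$-connected graph are pairwise noncrossing, and which the paper itself invokes.
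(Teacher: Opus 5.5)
Your combinatorial set-up is correct. Crossing is symmetric, each of $H-\{a,b\}$ and $H-\{a',b'\}$ has exactly two components, and $V-\{a,a',b,b'\}$ splits into $X_{aa'},X_{a'b},X_{bb'},X_{b'a}$ with the neighbourhood property you state.

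The gap is the step that is supposed to contradict ${\cal R}_2$-connectivity, which you never carry out. That the piece $H_1+a'b'$ has a 2-separator is no contradiction, since ${\cal R}_2$-connected graphs may well have 2-separators; this is what the 3-block decomposition is about. Your claim that the cycle edges $aa',a'b,bb',b'a$ ``would be ${\cal R}_2$-bridges'' is false in general. If $H[X_{aa'}\cup\{a,a'\}]$ is $K_4$ minus the edge $aa'$, then adding $aa'$ produces $K_4$, which is an ${\cal R}_2$-circuit.

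As written, your argument therefore closes only by falling back on \cite[Lemma 3.6]{JJconnrig}. That is exactly what the paper does: it states the lemma as a corollary of that result and gives no proof. But then everything before the citation is superfluous. Also, what you ``recall'' from Lemma 3.6 at the start is the cleaving statement, which the paper cites separately as \cite[Lemma 3.4]{JJconnrig} (Lemma~\ref{cleaving}).

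The missing idea is a short rank count, and it needs only rigidity. Let $G_1,\dots,G_4$ be the subgraphs of $H$ induced by $X_{aa'}\cup\{a,a'\}$, $X_{a'b}\cup\{a',b\}$, $X_{bb'}\cup\{b,b'\}$ and $X_{b'a}\cup\{b',a\}$. By your neighbourhood property, and since $ab,a'b'\notin E(H)$, every edge of $H$ lies in some $G_i$. Each of $a,a',b,b'$ lies in exactly two of the $G_i$, so $\sum_i |V(G_i)|=|V|+4$. As $|V(G_i)|\geq 2$ for each $i$,
\[
r_2(H)\leq \sum_{i=1}^4 r_2(G_i)\leq \sum_{i=1}^4 \bigl(2|V(G_i)|-3\bigr)=2|V|-4 .
\]
So $H$ is not 2-rigid (it is a four-bar linkage of pieces), contradicting Lemma~\ref{mconnbasic}(a) together with Theorem~\ref{theorem:gluck}. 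With this count your direct route is complete and self-contained. It uses only that $H$ is 2-connected and 2-rigid, and needs neither the splitting formula nor cleaving.
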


Let $\{a,b\}$ be a 2-separator of $H$ and let $X$ be the union of the
vertex sets of some, but not all  components of $H-\{a,b\}$.
We say that the graphs $H_1=H[X\cup \{a,b\}]+ab$ and $H_2=(H-X)+ab$
are obtained from $H$ by {\it cleaving} $H$ {\it along the 2-separator} $\{a,b\}$.
Note that if $ab\in E$, then $H_1,H_2$ contain only one copy of $ab$.

\begin{lemma} \cite[Lemma 3.4]{JJconnrig}
\label{cleaving}
Suppose that $H_1,H_2$ are obtained from $H$ by cleaving along a 2-separator.
If $H$ is ${\cal R}_2$-connected, then $H_1$, $H_2$ are also ${\cal R}_2$-connected.
\end{lemma}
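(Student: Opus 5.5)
We need to prove Lemma \ref{cleaving}: if $H$ is ${\cal R}_2$-connected and $H_1=H[X\cup\{a,b\}]+ab$, $H_2=(H-X)+ab$ are obtained by cleaving along the 2-separator $\{a,b\}$, then $H_1$ and $H_2$ are ${\cal R}_2$-connected. By symmetry it suffices to treat $H_1$. The plan is to use the rank additivity across a 2-separation together with the characterization of ${\cal R}_2$-components via circuits. Write $G_1=H[X\cup\{a,b\}]$ and $G_2=H-X$, so $E(H)=E(G_1)\cup E(G_2)$ (with $ab$ possibly in both if $ab\in E$), and $V(G_1)\cap V(G_2)=\{a,b\}$.

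The key tool is a gluing formula for the 2-dimensional rigidity matroid along two vertices. If $\{a,b\}$ is not linked in $G_2$, then $r_2(H)=r_2(G_1)+r_2(G_2)$ (up to the shared edge $ab$), and I would argue this forces $E(G_1)$ and $E(G_2)$ to be unions of ${\cal R}_2$-components of $H$, i.e., $H$ is not ${\cal R}_2$-connected, provided both sides contain edges besides $ab$. Both do, since $X$ and $V-X-\{a,b\}$ are nonempty and $H$ is 2-connected (Lemma \ref{mconnbasic}(a)), so each side has edges at the separator. The additivity itself follows from the standard fact that the union of two frameworks glued at two points with a non-rigid pair has infinitesimal motions from each side independently: the space of infinitesimal motions of $H$ is the fibre product of those of $G_1$ and $G_2$ over the motions of $\{a,b\}$, and counting dimensions gives the formula. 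Hence $\{a,b\}$ is linked in both $G_1$ and $G_2$, i.e., $r_2(G_i+ab)=r_2(G_i)$.

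Now I show that every edge $e\in E(G_1)$ lies in a common ${\cal R}_2$-circuit with $ab$ in $H_1$; this makes $H_1$ ${\cal R}_2$-connected. Since $H$ is ${\cal R}_2$-connected, pick an edge $f\in E(G_2)-ab$ and an ${\cal R}_2$-circuit $C$ of $H$ containing $e$ and $f$. The circuit $C$ meets both sides, so I split $C$ into $C_1=C\cap E(G_1)$ and $C_2=C\cap E(G_2)$. I claim $C_1+ab$ is a circuit of $H_1$. The argument is again the gluing rank formula. $C_2$ must make $\{a,b\}$ linked, since otherwise the rank splits and $C$ would be a disjoint union of dependent-free parts, contradicting that it is a circuit. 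Hence $C_1+ab$ is dependent, with rank $|C_1|$. Minimality of $C$ shows every proper subset of $C_1+ab$ containing $ab$ is independent: otherwise replace $ab$ by $C_2$ to find a smaller dependent set inside $C$. Subsets avoiding $ab$ lie in $C$ and are independent. Thus $ab$ and $e$ lie in the circuit $C_1+ab$. Connectivity then follows by transitivity through $ab$.

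The main obstacle is rigorously establishing the gluing statement: for $G=G_1\cup G_2$ with $V(G_1)\cap V(G_2)=\{a,b\}$, we have $r_2(G)=r_2(G_1+ab)+r_2(G_2+ab)-1$ when $ab$ is linked appropriately, and pure additivity otherwise. I would prove it via infinitesimal motions in a generic realization, translating each side so that the motions restricted to $a,b$ are compared modulo trivial motions. Everything after that is matroid bookkeeping.
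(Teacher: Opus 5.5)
Your argument is correct. The paper gives no proof of this lemma; it only cites it from Jackson and Jord\'an, so there is nothing internal to compare against. Your route is the standard self-contained one. First you establish the 2-separation rank formula: $r_2(G_1\cup G_2)=r_2(G_1)+r_2(G_2)-1$ if both sides link $\{a,b\}$, and $r_2(G_1)+r_2(G_2)$ otherwise. Then you cleave a circuit that crosses the separator into a circuit through $ab$. Your fibre-product dimension count does prove the formula. The restriction of each side's infinitesimal motion space to $\{a,b\}$ always contains the 3-dimensional space of $ab$-preserving velocities. It equals that space exactly when the side links $\{a,b\}$, and is all of $\R^4$ otherwise. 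So the "main obstacle" you flag is routine.

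There are three small points to tidy.

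First, the sentence ``$C_2$ must make $\{a,b\}$ linked \dots\ Hence $C_1+ab$ is dependent'' uses the wrong side: dependence of $C_1+ab$ requires $C_1$ to link $\{a,b\}$. Your formula gives additivity as soon as either side fails to link. Since $C_1$ and $C_2$ are independent, additivity would make $C$ independent, so in fact both $C_1$ and $C_2$ link $\{a,b\}$. You need both facts: the first for dependence of $C_1+ab$, the second for the minimality step where you replace $ab$ by $C_2$.

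Second, when $ab\in E(H)$ you must rule out $ab\in C$, since otherwise $C_1$ and $C_2$ overlap in $ab$ and the splitting breaks down. This follows from the same formula. Suppose $ab\in C$, and write $C-ab=D_1\cup D_2$ with $e\in D_1$ and $f\in D_2$, both nonempty and independent. If neither $D_i$ links $\{a,b\}$, then $r_2(C)=(|D_1|+1)+(|D_2|+1)-1=|C|$, contradicting dependence. So some $D_i$ links $\{a,b\}$, and then $D_i+ab$ is a proper dependent subset of $C$, which is impossible.

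Third, your opening step, showing that $\{a,b\}$ is linked in $G_1$ and in $G_2$, is never used afterwards and can be dropped.
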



The {\it augmented graph} $\hat H$ is obtained from $H$ by adding an edge $ab$
for all 2-separators $\{a,b\}$ of $H$ with $ab\notin E$.
A maximal 3-connected subgraph of $\hat H$ is called a {\it 3-block}.
It was shown in \cite[Section 3]{JJconnrig} that each 3-block is 
%
${\cal R}_2$-connected, every edge $e=ab$ of $\hat H$ belongs to at least one 3-block,
and if $e$ belongs to two or more 3-blocks, then $\{a,b\}$ is a 2-separator.
%
The 3-blocks can be obtained
from $\hat H$ by recursively cleaving the graph along 
2-separators. Note that $r_2(H)=r_2(\hat H)$ and the 2-separators of $H$ and $\hat H$
are the same by Lemma \ref{mconnbasic}(a) and Lemma \ref{nocross}, respectively.


Let
$J_1,J_2,\dots,J_t$ be the 3-blocks of $H$. 
For each 2-separator $\{a,b\}$ of $H$ 
%
let $h_H(ab)$ denote the number of 3-blocks $J_i$, $1\leq i\leq t$, with
$\{a,b\}\subset V(J_i)$, and let
$k(H)=\sum (h_H(ab)-1)$, where the summation is over all 2-separators $\{a,b\}$ of $H$.
We say that an
ordering $(X_1,X_2,\dots,X_p)$ of $p$ subsets of $V$
is {\it $m$-shellable}, for some integer $m\geq 0$, if
$|(\cup_{i=1}^{j-1} X_i) \cap X_j|\leq m$ for all $2\leq j\leq p$.

\begin{lemma}
\label{hinge}
Let $H=(V,E)$ be an ${\cal R}_2$-connected graph with 
3-blocks
$J_1,J_2,\dots,J_t$.
Then\\
(a) $r_2(H)=\sum_{i=1}^t r_2(J_i) - k(H)$, \\
(b) $t= k(H)+1$, \\
(c) for every $e\in E(H)$ the vertex sets $V(J_i)$, $1\leq i\leq t$, have a 2-shellable
ordering such that $e$ is induced by the first set of the ordering.
\end{lemma}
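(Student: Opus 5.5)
We argue by induction on the number of 2-separators of $H$, or equivalently on the number of cleaving steps needed to decompose $\hat H$ into its 3-blocks, and prove (a), (b) and (c) together. In the base case $H$ has no 2-separator. Then $\hat H=H$ and $H$ is itself 3-connected: it is 2-connected with at least four vertices by Lemma \ref{mconnbasic}(a). So $t=1$ and $k(H)=0$, and all three statements are trivial. For (c) the single set $V(J_1)=V$ contains both endpoints of $e$, so $e$ is induced by it.

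For the inductive step, pick a 2-separator $\{a,b\}$ of $H$ and cleave $\hat H$ (equivalently $H$, after adding $ab$) along it into $H_1$ and $H_2$. By Lemma \ref{cleaving}, both $H_1$ and $H_2$ are $\cR_2$-connected. I would then check the following structural claims, using Lemma \ref{nocross} (no crossing 2-separators):
\begin{itemize}
\item every 2-separator of $H_i$ is a 2-separator of $H$;
\item the 2-separators of $H$ other than $\{a,b\}$ are distributed among $H_1$ and $H_2$;
\item the 3-blocks of $H$ are exactly the disjoint union of the 3-blocks of $H_1$ and those of $H_2$.
\end{itemize}
Granting these, $h_H(ab)=h_{H_1}(ab)+h_{H_2}(ab)$, where $h_{H_i}(ab)$ is read as $1$ if $\{a,b\}$ is not a 2-separator of $H_i$. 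For every other separator $s$ we have $h_H(s)=h_{H_i}(s)$ for the unique $i$ containing it. Hence $k(H)=k(H_1)+k(H_2)+1$, and (b) follows from $t=t_1+t_2$.

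For (a), note that $V(H_1)\cap V(H_2)=\{a,b\}$ and $E(H_1)\cap E(H_2)=\{ab\}$ in $\hat H$. Both $H_1$ and $H_2$ are 2-rigid, so $\hat H$ is the union of two rigid graphs glued along a rigid edge. This gives
\[
r_2(\hat H)=r_2(H_1)+r_2(H_2)-1.
\]
The identity can be seen from Theorem \ref{combrigid}: $2|V|-3=(2|V_1|-3)+(2|V_2|-3)-1$ since $|V|=|V_1|+|V_2|-2$, and $\hat H$ is rigid. Combined with $r_2(H)=r_2(\hat H)$, induction and $k(H)=k(H_1)+k(H_2)+1$, this yields (a).

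For (c), suppose $e\in E(H)$. Then $e$ lies in $H_1$, say. The case $e=ab$, which lies in both, is also fine. By induction there is a 2-shellable ordering of the 3-blocks of $H_1$ whose first set induces $e$. Since $ab$ is an edge of $H_2$, we also get a 2-shellable ordering of the 3-blocks of $H_2$ whose first set contains $\{a,b\}$. Concatenate the two orderings. Each later block of $H_2$ meets the union of everything before it only inside $V(H_2)$, and that union's intersection with $V(H_2)$ is the union of the earlier $H_2$-blocks together with $\{a,b\}$. Because $\{a,b\}$ is already contained in the first $H_2$-block, this adds nothing new, so the bound of 2 is preserved. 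The first $H_2$-block itself meets the earlier sets only in $\{a,b\}$, which has size 2.

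I expect the main obstacle to be the structural bookkeeping in the inductive step: showing that the 2-separators and 3-blocks behave additively under cleaving, in particular computing $h_H(ab)$ correctly when $H-\{a,b\}$ has more than two components. Here I would rely on non-crossing (Lemma \ref{nocross}) and on the fact, quoted from \cite{JJconnrig}, that the 3-blocks arise by recursive cleaving.
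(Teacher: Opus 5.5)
Your argument is correct in substance, but it is organized differently from the paper's.

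The paper inducts on $|V|$ and always cleaves off a leaf. It takes a minimal $X$ with $|N_H(X)|=2$, so that $H_1$ is itself a single 3-block $J_1$ and the 3-blocks of $H_2$ are $J_2,\dots,J_t$. The counting is then just $k(H)=k(H_2)+1$ with one recursive call. For (c) it chooses $X$ so that $e\in E(H_2)$ and appends $V(J_1)$ at the end of the ordering.

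You instead cleave along an arbitrary 2-separator and recurse on both sides. This costs you the general splitting facts: the 3-blocks and 2-separators divide between $H_1$ and $H_2$, and $h_H(ab)=h_{H_1}(ab)+h_{H_2}(ab)$. Your convention for the latter is justified by the quoted fact that an edge lying in two 3-blocks spans a 2-separator. In return, (c) becomes symmetric: concatenating with an $H_2$-ordering anchored at $ab$ removes any need to steer $e$ away from the peeled block.

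The splitting facts do follow from Lemma \ref{nocross}, as you expect.
\begin{itemize}
\item No edge of $\hat H$ joins $X$ to $V-X-\{a,b\}$, so no 3-connected subgraph meets both sides.
\item For a 2-separator $s$ of $H_1$, the edge $ab$ keeps $\{a,b\}-s$ inside one component of $H_1-s$. So some other component lies in $X$, and $s$ cuts it off in $H$ too.
\end{itemize}

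One thing to fix is the induction parameter. The number of 2-separators is not ``equivalent'' to the number of cleaving steps, and it need not decrease. Let $H$ be three copies of $K_4$ sharing the edge $ab$. Then $\{a,b\}$ is its only 2-separator. However you cleave, one side is two copies of $K_4$ sharing $ab$, which still has exactly one 2-separator. Induct on $|V|$ instead, as the paper does, since both sides are strictly smaller. Alternatively, induct on $t$, since $t=t_1+t_2$ with $t_i\geq 1$. The rest of your argument goes through unchanged.
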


\begin{proof}
The proof is by induction on $|V|$. For $|V|=4$ we have $H=K_4$ and the lemma trivially holds.
Suppose $|V|\geq 5$. 
If $H$ has no 2-separators, or equivalently, if $H$ is 3-connected, then
$t=1$, $k(H)=0$, and the
lemma is straightforward.
Let us assume that $H$ is not 3-connected and let 
$X\subset V$ be a minimal subset of vertices satisfying $|N_H(X)|=2$
and $V-X-N_H(X)\not= \emptyset$. Let $N_H(X)=\{a,b\}$.
By Lemma \ref{nocross} we have $N_H(X)=N_{\bar H}(X)$.
Let $H_1,H_2$ be the graphs obtained from $\bar H$
by cleaving along $\{a,b\}$, such that $V(H_1)=X\cup \{a,b\}$.
Note that for every designated edge we can choose $X$ so that $e\in E(H_2)$ holds.
By Lemma \ref{nocross} and the choice of $X$, 
$H_1$ is a 3-block of $H$ (say, $H_1=J_1$), and the
3-blocks of $H_2$ are $J_2,J_3,\dots, J_t$.
Furthermore, both $H_1$ and $H_2$ are ${\cal R}_2$-connected by Lemma \ref{cleaving}.
By induction, $r_2(H_2)=\sum_{i=2}^t r_2(J_i) - k(H_2),$ and $t-1=k(H_2)+1$.
Let $B_1$, $B_2$ be  ${\cal R}_2$-bases of $H_1,H_2$, respectively, with 
$ab\in E(B_1)\cap E(B_2)$. Then 
$B_1\cup B_2$ is an ${\cal R}_2$-base of $H$
by Lemma \ref{cleaving}. Hence $r_2(H)=r_2(H_1)+r_2(H_2)-1$.
Now we can deduce that (a) and (b) hold for $H$ by using that
$k(H)=k(H_2)+1$.
Moreover, the vertex sets of $J_2,J_3,...,J_t$ have a 2-shellable ordering such that
$e$ is induced by the first set of the ordering, by induction. By adding
$V(J_1)$ to the end of this ordering we obtain the ordering as required by (c).
\end{proof}

A 3-block of some non-trivial ${\cal R}_2$-component of a graph $G$
is said to be an {\it ${\cal R}_2$-block} of $G$.

\section{Equivalent realizations and flexings}
\label{sec:fle}

In this section we prove a key result, whose proof is based on
continuous motions of frameworks.

Let $G=(V,E)$ be a graph and let
$(G,p)$ be a $d$-dimensional framework. A {\it flexing} of the framework $(G,p)$ is
a continuous function $\phi:[0,1]\to {\R^{d|V|}}$ such that\\ 
(i) $\phi(0)=p$, \\
(ii) $(G,\phi(t))$ is equivalent to $(G,p)$ for all $t\in [0,1]$, and\\
(iii) $(G,\phi(t))$ is not congruent to $(G,p)$ for all $t\in (0,1]$.

The framework $(G,p)$ is said to be {\it flexible} if it has a flexing, and {\it rigid} otherwise.
Let us fix an ordering of the edges of $G$ and define
the {\it rigidity map} $f_G:{\mathbb R}^{d|V|}\to {\mathbb R}^{|E|}$ of $G$ by
$$f_G(p)= ( \dots, ||p(u)-p(v)||^2, \dots ),$$
where $uv\in E$, and $p(w)\in \R^d$ for $w\in V$.

For a smooth map $f:\R^n\to \R^m$, let $k = \max \{ \rank\ df(x) : x\in \R^n \}$, the maximum of the rank of
the Jacobian of $f$. We say that $x\in \R^n$ is a {\it regular point} of $f$ if $\rank \ df(x) = k$.
The image $f(p)$ is a {\it regular value} if each point in $f^{-1}(f(p))$ is a regular point.
Note that the Jacobian $df_G(p)$ of the rigidity map at some point $p\in \R^{d|V|}$ is given by $2R(G,p)$, where
$R(G,p)$ is the rigidity matrix of $(G,p)$.
The next lemma is a well-known "rigidity predictor".

\begin{lemma} \cite[Proposition 5.1]{R}
\label{roth}
Let $(G,p)$ be a $d$-dimensional framework. Suppose that $p$ is a regular
point of the rigidity map $f_G$ and the framework does not lie in a hyperplane of
$\R^d$. Then $(G,p)$ is rigid if and only if $\rank\ R(G,p)= d|V| - \binom{d+1}{2}$
and $(G,p)$ is flexible if and only if
$\rank\ R(G,p)< d|V| - \binom{d+1}{2}$.
\end{lemma}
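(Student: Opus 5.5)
This is the classical argument from differential topology (the constant rank theorem), applied to the rigidity map $f_G$.

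\emph{Setup.} Let $k=\rank\, R(G,p)$, which is the maximum rank of $df_G$ since $p$ is a regular point. The set of regular points is open, because rank is lower semicontinuous and $k$ is already the maximum. So $df_G$ has constant rank $k$ on a neighbourhood $U$ of $p$.

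\emph{Local structure of the level set.} By the constant rank theorem, $M=f_G^{-1}(f_G(p))\cap U$ is, after shrinking $U$, a smooth embedded submanifold of dimension $d|V|-k$ through $p$. Its tangent space at $p$ is the kernel of $R(G,p)$, i.e.\ the space of infinitesimal motions of $(G,p)$.

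\emph{The congruence orbit.} Let $\mathcal O=\{\sigma\circ p : \sigma \text{ an isometry of } \R^d\}$. It lies in $M$, and it is a smooth submanifold near $p$. Since $p(V)$ affinely spans $\R^d$, the stabilizer of $p$ in the isometry group is trivial. Hence $\mathcal O$ has dimension $\dim \mathrm{Euc}(d)=\binom{d+1}{2}$, and the trivial infinitesimal motions form a $\binom{d+1}{2}$-dimensional subspace of $\ker R(G,p)$. This gives $k\le d|V|-\binom{d+1}{2}$ in general.

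\emph{The two cases.}

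\emph{Case $k=d|V|-\binom{d+1}{2}$.} Here $\dim M=\dim\mathcal O$. Since $\mathcal O$ is a submanifold of $M$ of the same dimension, it is an open neighbourhood of $p$ in $M$. So every framework equivalent to $(G,p)$ and sufficiently close to $p$ is congruent to it, and $(G,p)$ is rigid. Conversely, this shows that rigid forces there to be no flexing.

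\emph{Case $k<d|V|-\binom{d+1}{2}$.} Here $\dim M>\dim\mathcal O$. The plan is as follows.
\begin{itemize}
\item Take a smooth slice $S\subset M$ through $p$ transversal to $\mathcal O$, of positive dimension $d|V|-k-\binom{d+1}{2}$.
\item Pick a smooth curve $\gamma:[0,1]\to S$ with $\gamma(0)=p$ and $\gamma'(0)\ne0$.
\item The points of $S$ near $p$ other than $p$ are not in $\mathcal O$ locally. Since $\mathcal O$ is closed, this extends to: $\gamma(t)$ is not congruent to $p$ for small $t>0$.
\item Reparametrizing $\gamma$ gives a flexing.
\end{itemize}

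\emph{Conclusion.} The two cases are exhaustive and mutually exclusive. Rigid and flexible are complementary, because a flexing yields non-congruent equivalent frameworks arbitrarily close to $p$, and conversely. Together these give both equivalences.

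\emph{Main obstacle.} The hardest step is justifying the claim in the second case that points of $S$ near $p$ are not congruent to $p$. A point of $S$ could in principle be congruent to $p$ via an isometry far from the identity. This is excluded because $\mathcal O$ is an embedded closed submanifold: the map $\mathrm{Euc}(d)\to\mathcal O$ is proper and injective, since the stabilizer is trivial. So near $p$ the set $\mathcal O$ coincides with the image of a small neighbourhood of the identity. Transversality of $S$ then gives $S\cap\mathcal O=\{p\}$ locally.
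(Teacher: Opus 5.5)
Your proposal is correct and is essentially the classical Asimow--Roth dimension-comparison argument, which the paper only cites and does not reprove: near a regular point the frameworks equivalent to $(G,p)$ form a manifold of dimension $d|V|-\rank R(G,p)$, this manifold contains the $\binom{d+1}{2}$-dimensional congruence orbit (the stabilizer is trivial because $p$ affinely spans $\R^d$), and rigidity is exactly equality of these two dimensions. Your construction of a flexing from a transversal slice, with properness of the orbit map ruling out congruences via isometries far from the identity, gives the path version of flexibility directly under the regularity hypothesis, so you never need the general equivalence of the neighbourhood and path definitions of rigidity (which in general requires the curve selection lemma).
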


We shall need the following statement concerning flexings 
that change the
distance between a designated vertex pair.

\begin{lemma}
\label{lem:flex}
Let $(G,p)$ be a generic realization of $G=(V,E)$ in $\R^d$ and
suppose that  $\{x,y\}$ is not $d$-linked in $G$ for some $x,y\in V$.
Let $(G,q)$ be an equivalent
realization with a $d$-dimensional affine span.
Then $(G,q)$ has a flexing $\phi:[0,1]\to \R^{d|V|}$ for which
$||\phi(t)(x)-\phi(t)(y)||\not= ||p(x)-p(y)||$ for all $t\in (0,1]$.
\end{lemma}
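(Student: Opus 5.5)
We apply the rigidity predictor (Lemma \ref{roth}) to the augmented graph $G+xy$ rather than to $G$ itself. The framework $(G+xy,q)$ will be shown to be flexible. Any flexing of it automatically keeps the $x$--$y$ distance constant, and it has to move relative to $q$ in a way that changes that distance. Concretely, let $f=f_G$ and $g=f_{G+xy}$, so that $g=(f,\ell)$ with $\ell(r)=||r(x)-r(y)||^2$. Let $k_G$ and $k_{G+xy}$ be the maximum ranks of the Jacobians. Since $p$ is generic, $k_G=r_d(G)$. Since $\{x,y\}$ is not $d$-linked, $k_{G+xy}=r_d(G+xy)=r_d(G)+1$.

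\textbf{Step 1: regularity at $q$.} Because $p$ is generic and $q$ is equivalent to $p$, the value $f(q)=f(p)$ is a generic point of the image. The standard argument (e.g.\ as in \cite{GHT} or \cite{JJS}) then gives that every point of $f^{-1}(f(p))$ is regular, so $\rank\, R(G,q)=r_d(G)$. We do not need $q$ to be regular for $g$. Instead we work in the fibre $M=f^{-1}(f(p))$. Near $q$ this fibre is a smooth manifold of dimension $d|V|-r_d(G)$, because $q$ is a regular point and the rank is locally constant.

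\textbf{Step 2: $\ell$ is non-constant on the component of $M$ through $q$, modulo congruences.} Suppose instead that $\ell$ were locally constant on $M$ near $q$. Then $\ell$ would be constant on an open subset of the fibre. We want to move this conclusion back to $p$ via genericity. The fibre over the generic value $f(p)$ is defined over $\mathbb{Q}(f(p))$, and the statement ``the irreducible component of $M$ containing $q$ carries $\ell$ as a locally constant function'' would force the differential $d\ell$ to lie in the row space of $R(G,\cdot)$ along that component. This is an algebraic condition. The step I expect to be the main obstacle is transferring the resulting rank drop $\rank\, R(G+xy,\cdot)=r_d(G)$ from $q$ to a generic point, which would contradict $r_d(G+xy)=r_d(G)+1$. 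I would try the following route. The map $g$ restricted to a neighbourhood of $q$ has rank $r_d(G)$ along a whole open subset of $M$. That open subset has dimension $d|V|-r_d(G)$. The set of points where $\rank\, dg\le r_d(G)$ is a $\mathbb{Q}$-variety $W$ containing an open piece of the fibre, for every fibre obtained by varying $p$ generically. Hence $W$ contains a set of full dimension $d|V|$, so $W$ is everything, which is the desired contradiction. Alternatively, one can use the fact that $\mathrm{rank}\,dg$ on $M$ near $q$ equals $r_d(G)$ plus the rank of $d\ell|_{TM}$. Then $d\ell|_{TM}\neq 0$ at generic-type points of $M$ near $q$, and this is an open condition.

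\textbf{Step 3: constructing the flexing.} Using Step 2, choose a point $q'\in M$ arbitrarily near $q$ with $d(\ell|_M)(q')\neq 0$, then follow a smooth curve in $M$. The cleanest option is to take an analytic curve $\gamma$ in $M$ starting at $q$ along which $\ell$ is non-constant; such a curve exists by the curve selection lemma applied to the semialgebraic set $\{r\in M:\ell(r)\neq\ell(q)\}$, which has $q$ in its closure. Set $\phi(t)=\gamma(t)$. Condition (ii) holds because $\gamma$ stays in $M$. Because $\ell\circ\gamma$ is analytic and non-constant, it differs from $\ell(q)=||p(x)-p(y)||^2$ for all small $t>0$, and we reparametrize so that this holds on $(0,1]$. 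This immediately gives condition (iii): non-congruence follows from the changed $x$--$y$ distance. The $d$-dimensional affine span of $q$ is used in Lemma \ref{roth}-type arguments to ensure that trivial motions account for exactly $\binom{d+1}{2}$ dimensions. Consequently, the non-constancy in Step 2 is a genuine non-trivial motion and is not an artefact of degenerate placements.
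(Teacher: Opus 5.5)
Your argument is sound in substance but takes a different route from the paper, and it contains one false identification.

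\textbf{Comparison with the paper.} The paper never works on the fibre directly. From the fact that $f_H(p)$ is a regular value for every graph $H$ on $V$, it reads off $\rank R(G,q)=r_d(G)$ and $\rank R(G+xy,q)=\rank R(G,q)+1$. It then gets the flexing from the rigidity predictor (Lemma \ref{roth}), applied to a supergraph $\bar G$ that is one rank short of rigid and becomes rigid when $xy$ is added. That is where the $d$-dimensional affine span of $q$ enters. You instead treat $M=f_G^{-1}(f_G(p))$ as a manifold near $q$, show that $\ell|_M$ is not locally constant at $q$, and obtain the curve by curve selection. This needs neither Lemma \ref{roth} nor the affine-span hypothesis, because non-congruence is witnessed by the changing $x$--$y$ distance. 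Your closing remark about trivial motions is therefore superfluous.

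\textbf{The slip.} In Step 3 you write $\ell(q)=\|p(x)-p(y)\|^2$. Since $xy\notin E$, equivalence under $G$ gives no such thing. It is harmless once you split cases.
If $\ell(q)\neq\ell(p)$, any flexing of $(G,q)$, shortened so that by continuity $\ell$ stays away from $\ell(p)$, already gives the stated conclusion, and your $\gamma$ is one.
If $\ell(q)=\ell(p)$, then $q\in f_{G+xy}^{-1}(f_{G+xy}(p))$. The regular-value fact you cite in Step 1 then also gives $\rank R(G+xy,q)=r_d(G)+1$, i.e.\ $d\ell$ does not vanish on $T_qM$, and Step 2 is immediate. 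This is exactly the rank statement the paper uses.

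So for the lemma as stated, your genericity transfer is unnecessary. What it buys is the stronger conclusion $\|\phi(t)(x)-\phi(t)(y)\|\neq\|q(x)-q(y)\|$ with no assumption on $\ell(q)$. That is the form invoked in the proof of Theorem \ref{lem:geo}. There the rank argument is unavailable, because $q$ may be a critical point of $\ell|_M$: for example, $G$ a $4$-cycle $xayb$ in $\R^2$ with $q(x),q(a),q(y)$ collinear, where $\rank R(G+xy,q)=\rank R(G,q)$.

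\textbf{Making Step 2 precise.} If you keep Step 2, write the transfer out as follows. Under your contradiction hypothesis, the set $T$ of values $c$ with $\dim\bigl(W\cap f^{-1}(c)\bigr)\ge d|V|-r_d(G)$ contains $f(p)$. $T$ is semialgebraic and definable over $\mathbb{Q}$. By genericity of $p$, $f(p)$ lies in no proper algebraic subset, defined over $\overline{\mathbb{Q}}$, of the Zariski closure of $f(\R^{d|V|})$. Hence $T\cap f(\R^{d|V|})$ has dimension $r_d(G)$, so $\dim W\ge d|V|$ and $W=\R^{d|V|}$, contradicting $r_d(G+xy)=r_d(G)+1$. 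Your alternative via ``generic-type points of $M$'' is circular and should be dropped.
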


\begin{proof}
Let $H$ be a graph on vertex set $V$.
Since $p$ is generic and $f_H$ is a polynomial map between manifolds,
basic results of algebraic geometry imply that $f_H(p)$ is a regular value, 
see, e.g., \cite[Proposition 2.32]{GHT} or \cite[Theorem 9.6.2]{book}.
In particular,
$q$ is a regular point of $f_G$ and $\{x,y\}$ is not linked in $(G,q)$.
Thus
$\rank\ R(G,q) < d|V|-\binom{d+1}{2}$ and $\rank\ R(G+xy,q)=\rank\ R(G,q)+1$. Moreover,
since $(G,q)$ has a $d$-dimensional affine span, $G$ has a supergraph $\bar G$
for which $\rank\ R(\bar G,p)=d|V|-\binom{d+1}{2}-1$ and $\rank\ R(\bar G+xy,p)=d|V|-\binom{d+1}{2}$.
Hence $(G,q)$ has the desired flexing by Lemma \ref{roth}.
\end{proof}

In a graph $G=(V,E)$ the degree of a vertex $v\in V$ (resp. the set of neighbours of $v$)
is denoted by $\deg_G(v)$ (resp. $N_G(v)$). Hence we have $\deg_G(v)=|N_G(v)|$ if $G$ contains no
parallel edges.

The next theorem is the main result of this section.

\begin{theorem}
\label{lem:geo}
Let $G=(V,E)$ be a $d$-rigid graph, let $xy\in E$ be
an $\cR_d$-bridge in $G$
with $\deg_G(y)\geq d+2$,
and suppose that
the vertices in $N_G(y)-\{x\}$ are
pairwise globally $d$-linked in $G-y$.
Let $\{u,v\}$ be a pair of vertices with
$y\notin \{u,v\}$, such that
$\{u,v\}$ is not globally $d$-linked in $G-y$. 
Then $\{u,v\}$ is not globally $d$-linked in $G$.
\end{theorem}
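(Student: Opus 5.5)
The plan is to build, from a generic realization of $G-y$ witnessing that $\{u,v\}$ is not globally $d$-linked, a family of pairs of equivalent realizations of $G$ in which the $u$--$v$ distances differ. A dimension count then shows that this family reaches generic realizations of $G$. Throughout let $S=N_G(y)-\{x\}$, so $|S|\geq d+1$, and $n=|V|$.

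\emph{Step 1 (rank bookkeeping).} Since $G$ is $d$-rigid and $xy$ is an $\cR_d$-bridge, $r_d(G-xy)=d n-\binom{d+1}{2}-1$. The vertex $y$ has at least $d+1$ neighbours in $G-xy$, so $r_d(G-xy)=r_d(G-y)+d$. Hence $r_d(G-y)=d(n-1)-\binom{d+1}{2}-1$. By Lemma \ref{roth}, generic realizations of $G-y$ are therefore flexible with exactly one degree of freedom modulo congruence. Because $\{u,v\}$ is not linked in $G-y$ (otherwise it would be globally linked, or at least this is where the flex comes from), Lemma \ref{lem:flex} shows that $\|q(u)-q(v)\|$ is non-constant along the fibre.

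\emph{Step 2 (extending to $y$).} Fix a generic $p_0$ of $G-y$ and an equivalent $q_0$ with $\|q_0(u)-q_0(v)\|\neq\|p_0(u)-p_0(v)\|$. By the regularity argument of Lemma \ref{lem:flex}, the same holds for all generic $p_0$ in a neighbourhood, with $q_0$ depending continuously on $p_0$. Since the vertices of $S$ are pairwise globally linked in $G-y$, $q_0|_S$ is congruent to $p_0|_S$, so there is an isometry $T$ of $\R^d$ with $q_0|_S=T\circ p_0|_S$. For a point $z\in\R^d$, put $p=p_0\cup\{y\mapsto z\}$ and $q=q_0\cup\{y\mapsto Tz\}$. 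These are equivalent as realizations of $G-xy$, and the $u$--$v$ distances still differ. They are equivalent as realizations of $G$ exactly when $\|p_0(x)-z\|=\|T^{-1}q_0(x)-z\|$. This means $z$ lies on the bisecting hyperplane of $p_0(x)$ and $T^{-1}q_0(x)$, which is a genuine hyperplane provided $T^{-1}q_0(x)\neq p_0(x)$. If instead $T^{-1}q_0(x)=p_0(x)$, then any generic $z$ works directly and we are done.

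\emph{Step 3 (dimension count, the main obstacle).} The resulting realizations $p$ of $G$ are not generic, since $z$ is confined to a hyperplane. Consider the set $W$ of all such $p$, with $p_0$ ranging over an open set of generic realizations of $G-y$ and $z$ over the bisector. If this were all, $W$ would have dimension $d(n-1)+(d-1)=dn-1$, one too few. The missing dimension comes from the one-dimensional flex of $q_0$ in the fibre of $G-y$ (Step 1). Moving $q_0$ along its flex moves $T^{-1}q_0(x)$, and hence moves the bisector; note that the edge lengths are unchanged while doing so. Alternatively, one can flex $p_0$ itself inside its fibre.

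The plan is to show that the map $(p_0,\text{flex parameter},z)\mapsto p$ has rank $dn$ at some point, and then invoke the standard fact that a semialgebraic set of full dimension $dn$ defined over $\mathbb{Q}$-closure contains generic points. The rank condition holds if the bisector genuinely rotates or translates non-trivially as the flex parameter varies. I would verify this by showing that $T^{-1}q_t(x)$ is non-constant along the flex. If it were constant, $x$ would behave like a vertex globally linked to $S$ along the whole curve, and I would derive a contradiction using that $\{x,y\}$ is not linked in $G-xy$ and applying Lemma \ref{lem:flex} to $G-xy$ with the pair $\{x,y\}$. The inequality of $u$--$v$ distances is an open condition, so it survives the perturbation to a generic $p$. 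This yields a generic realization $(G,p)$ and an equivalent $(G,q)$ with a different $u$--$v$ distance, proving that $\{u,v\}$ is not globally $d$-linked in $G$.
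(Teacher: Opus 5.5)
Your proposal is essentially the paper's proof: normalize $q$ on $S=N_G(y)-\{x\}$ using pairwise global linkedness, reduce equivalence on $G$ to the condition that the position of $y$ lies on the bisector of $p(x)$ and the normalized $q(x)$, and use that $xy$ is an $\cR_d$-bridge together with Lemma~\ref{lem:flex} to flex $q$ so that $x$ moves relative to $S$ and the bisectors sweep an open set (the paper applies the lemma to a pair $\{x,w\}$ with $w\in S$ in $G-y$, and you apply it to $\{x,y\}$ in $G-xy$), the only real difference being that the paper keeps the generic $p$ fixed and takes the position $a^*$ of $y$ generic over the coordinates of $p$ inside the swept ball, which makes your variation of $p_0$, the continuous choice of $q_0$, the Jacobian count and the degree-of-freedom bookkeeping of Step~1 unnecessary. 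You should drop the aside in Step~1 that $\{u,v\}$ is not linked in $G-y$: it is false in general (linked pairs need not be globally linked), but nothing depends on it, since the $u$--$v$ inequality comes from the chosen $q_0$ and persists by continuity; likewise $r_d(G-xy)=r_d(G-y)+d$ holds because $S$ is a rigid cluster in $G-y$, not merely because $|S|\geq d+1$.
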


\begin{proof}
Let $H=G-y$ and let $(H,p)$ be 
a generic $d$-dimensional realization of $H$
in which $\{u,v\}$ is not globally linked.
Then there exists an equivalent realization $(H,q)$
for which $|p(u)-p(v)|\not= |q(u)-q(v)|$.
Since the vertices in $N_G(y)-\{x\}$ are
pairwise globally $d$-linked in $H$, we may suppose, by applying an isometry, if necessary, that
$p(v)=q(v)$ for all $v\in N_G(y)-\{x\}$.

\begin{claim}
\label{clflex}
There exists
a vertex $w\in N_G(y)-\{x\}$ for which $\{w,x\}$ is not
$d$-linked in $H$.
\end{claim}

\begin{proof}
Suppose that $\{w,x\}$ is $d$-linked in $H$ for all $w\in N_G(y)-\{x\}$.
The fact that the vertices in $N_G(y)-\{x\}$ are pairwise globally $d$-linked in $H$
implies that they are also pairwise $d$-linked in $H$.
Thus the vertex set $N_G(y)$ forms a $d$-rigid cluster of size at least $d+1$ in $H$.
It follows that the edge $xy$ is induced by a $d$-rigid cluster in $G-xy$.
Hence $xy$ is $d$-linked in $G-xy$,
contradicting the assumption that $xy$ is an ${\cal R}_d$-bridge of $G$.
\end{proof}

The facts that $\deg_G(y)\geq d+2$ and
the subframework of $(H,q)$ on vertex set $N_G(y)-\{x\}$ is congruent to that
of $(H,p)$ shows that $(H,q)$ does not
lie
in a hyperplane of $\R^d$.
Thus we can apply Lemma \ref{lem:flex} to $(H,p)$ and $(H,q)$ to deduce that
$(H,q)$ has a flexing $\phi:[0,1]\to \R^{d|V|}$ such that for some $w\in N_G(y)-\{x\}$ we have
$||\phi(t)(x)-\phi(t)(w)||\not= ||q(x)-q(w)||$ for all $t\in (0,1]$.
We may assume, by continuity, and scaling the flexing, that 
$||p(u)-p(v)||\not= ||\phi(t)(u)-\phi(t)(v)||$ for all $t\in [0,1]$.
Since the vertices in $N_G(y)$ are pairwise $d$-linked in $H$, and $q$ is a regular point,
they are pairwise linked in $(H,q)$.
Hence we may assume that 
$\phi(t)(w)=q(w)$ for all $w\in N_G(y)-\{x\}$ and $t\in [0,1]$.
It follows that the flexing changes the position of $x$.
We shall compare $\phi(t)(x)$ to a "fixed" point $p(x)\in \R^d$, the position of $x$ in $(H,p)$.

For each $t\in [0,1]$ 
the points $a\in \R^d$ with
$||a-p(x)||=||a-\phi(t)(x)||$ lie on a hyperplane. As $\phi(t)(x)$ moves during the flexing, the union of
these hyperplanes contains 
an open ball $B\subset \R^d$. 
Let us choose a point $a^*\in B$ for which the multiset of the coordinates
of $p(v)$, $v\in V-\{y\}$, together with the coordinates of $a^*$, are
algebraically independent over the rationals.
Let $t'\in [0,1]$ such that 
$|a-p(x)|=|a-\phi(t')(x)|$. With these points in hand we are ready to define two
frameworks that verify the statement of the theorem.

Let $(G,p_1)$ be obtained from $(H,p)$ by adding vertex $y$
and putting $p_1(y)=a^*$. Let $(G,p_2)$ be obtained from
$(H,\phi(t'))$ by adding vertex $y$ and putting $p_2(y)=a^*$.
It follows from the choice of $a^*$ that $(G,p_1)$ is generic.
Since the subframeworks of $(G,p_1)$ and $(G,p_2)$ on vertex set
$N_G(y)-\{x\}$ are identical, the choice of $a^*$ implies that
$(G,p_1)$ and $(G,p_2)$  are equivalent.
As we have
$||p(u)-p(v)||\not= ||\phi(t')(u)-\phi(t')(v)||$,
we obtain that 
$\{u,v\}$ is not globally $d$-linked in $G$, as required.
\end{proof}

\section{Globally linked pairs in $\R^2$}
\label{sec:mai}

In this section we complete the proof of 
Theorem \ref{thm:MAIN}. As we discussed
in the Introduction, the theorem follows from
the next statement.

\begin{theorem}
\label{thm:main}
Let $G=(V,E)$ be a graph and let $u,v\in V$ be a pair of non-adjacent vertices.
If 
$\{u,v\}$ is globally 2-linked in $G$, then
there is an $\cR_2$-component $H$ of $G$ with
$u,v\in V(H)$ and $\kappa_H(u,v)\geq 3$.
\end{theorem}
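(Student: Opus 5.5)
We argue by contradiction, using induction on $|V|+|E|$. Let $G$ be a minimal counterexample: $\{u,v\}$ is a non-adjacent globally 2-linked pair, but no $\mathcal{R}_2$-component $H$ of $G$ contains both $u,v$ with $\kappa_H(u,v)\ge 3$. We first reduce to a convenient structure.

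Global linkedness implies linkedness: a flexing changing $\|x-y\|$ would otherwise exist by Lemma \ref{lem:flex}. Hence $\{u,v\}$ is 2-linked. Since $\{u,v\}$ is globally 2-linked in $G$, it is also globally 2-linked in the rigid component containing both, and we may assume $G$ is 2-rigid.

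Next we remove a vertex whenever possible. If some vertex $y\notin\{u,v\}$ has degree 2, then $G$ is a 0-extension of $G-y$. By Lemma \ref{lem0ext} the pair remains globally linked in $G-y$. The $\mathcal{R}_2$-components of $G-y$ are those of $G$ minus two bridges, so minimality gives a contradiction. The same argument applies to any $\mathcal{R}_2$-bridge whose deletion keeps $\{u,v\}$ globally linked.

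The main structure comes from the $\mathcal{R}_2$-components. Since $r_2$ is additive over components (Lemma \ref{mconnbasic}(c)) and $G$ is rigid, $G$ is assembled from its nontrivial components and its bridges. Using Lemma \ref{hinge}, we refine the nontrivial components into $\mathcal{R}_2$-blocks, which are 3-connected and $\mathcal{R}_2$-connected, hence globally rigid by Theorem \ref{theorem:2dimgloballyrigid}. The key step is to find a vertex $y\notin\{u,v\}$ satisfying three conditions:
\begin{itemize}
\item $y$ is incident to an $\mathcal{R}_2$-bridge $xy$;
\item $\deg(y)\ge 4$;
\item $N(y)-\{x\}$ lies inside a globally rigid part of $G-y$, so that these neighbours are pairwise globally linked.
\end{itemize}
Given such a $y$, Theorem \ref{lem:geo} shows that $\{u,v\}$ is globally linked in $G-y$, contradicting minimality; we still have to check that the condition on components is inherited by $G-y$.

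When no such removable vertex exists, we fall back on bridge 1-extensions. If $y$ has degree 3 and is incident to a bridge, we reverse the 1-extension: delete $y$ and add an edge between two of its neighbours that keeps $\{u,v\}$ in the right position. Theorem \ref{lem1ext} then transfers "not globally linked" from the smaller graph to $G$.

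The terminal case is when $G$ has no bridges. Then $G$ is $\mathcal{R}_2$-connected, and Theorem \ref{mconn} gives $\kappa_G(u,v)\ge3$, a contradiction. A variant of this handles the case where $u,v$ lie in a common component $H$ with $\kappa_H(u,v)\le2$. There $H$ has a 2-separator $\{a,b\}$ separating $u$ from $v$. We cleave along it (Lemma \ref{cleaving}) and realise a reflection across the line $ab$, using the bridge structure outside $H$ to make it consistent.

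The main obstacle is the selection step: guaranteeing that a vertex $y$ with the hypotheses of Theorem \ref{lem:geo}, or a reducible degree-3 vertex, always exists outside $\{u,v\}$. The first approach to try is an ear-type or shelling argument. Use the 2-shellable ordering of Lemma \ref{hinge}(c) on the blocks, together with an analogous ordering of the bridges and blocks across the whole rigid graph, and choose $y$ in the last piece added. That piece meets the rest in at most two vertices, or is attached by bridges. Its neighbours other than the bridge end then lie in a single 3-block, which is globally rigid and therefore gives the pairwise global linkedness required.
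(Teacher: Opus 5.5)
\textbf{There is a genuine gap.} Your outline uses the right local tools: Lemma \ref{lem0ext}, Theorem \ref{lem1ext}, Theorem \ref{lem:geo}, and induction by deleting a vertex outside $\{u,v\}$. But the step you flag as the main obstacle is the heart of the proof, and the shelling idea does not resolve it. Choosing $y$ in the ``last'' block gives no control over the number of $\mathcal{R}_2$-bridges at $y$. Every vertex of that block may carry two or more bridges to other parts of $G$. Then $N(y)-\{x\}$ contains another bridge end, which need not be globally linked to the others in $G-y$, so Theorem \ref{lem:geo} is unavailable. Nor is there any available shelling of blocks together with bridges across a whole rigid graph.

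The missing idea is a global rank count. First enlarge $G$; this is the correct direction, since failure of global linkedness in a supergraph implies it in $G$, and it means you should induct on $|V|$ alone rather than $|V|+|E|$. Add new edges other than $uv$, each an $\mathcal{R}_2$-bridge of the enlarged graph, until $G$ is 2-rigid, and then make every $\mathcal{R}_2$-block complete; both steps preserve condition (\ref{no}). Now $2|V|-3=r_2(G)=\sum_i r_2(H_i)+|F|$, with $F$ the set of bridges. Expand $r_2(H_i)$ by Lemma \ref{hinge}(a),(b). Bound $|F|$ from below by degrees in $(V,F)$, using deficiencies $c(x)=\max\{2-d_F(x),0\}$ for vertices in exactly one block and $c(z)=\max\{4-d_F(z),0\}$ for vertices in no block. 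This yields $\sum c\geq 6$ with every $c\leq 2$. Hence there are at least three reducible vertices (in a unique block with at most one incident bridge, or in no block with at most three), and one of them avoids $\{u,v\}$.

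Your case analysis also fails where it matters. You require $y$ to be incident with a bridge and treat the bridgeless situation as terminal, asserting that $G$ is then $\mathcal{R}_2$-connected. This is false: three copies of $K_4$ glued cyclically at single vertices form a rigid, bridgeless graph with three $\mathcal{R}_2$-components. The proposed reflection across a 2-separator ``made consistent by the bridge structure'' is essentially the whole difficulty, not a routine step. The paper needs neither, and handles each reducible type directly:
\begin{itemize}
\item A reducible vertex in a unique block with no incident bridge has a clique neighbourhood after completion. So a realization of $G-y$ equivalent to a generic one, but with a different $u$--$v$ distance, extends to $G$ by placing $y$ at the same point in both.
\item With one incident bridge $xy$ you are exactly in Theorem \ref{lem:geo}. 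Note, however, that its hypothesis is pairwise global linkedness of $N(y)-\{x\}$ in $G-y$, and a 3-block containing $y$ need not remain globally rigid once $y$ is deleted. Completing the blocks is what makes $N(y)-\{x\}$ a clique in $G-y$ and gives $\deg(y)\geq 4$.
\item A vertex in no block has degree 2 (Lemma \ref{lem0ext}) or 3. In the degree-3 case you need the argument of Claim \ref{claim2}: some pair $x_1,x_2$ of neighbours is not 2-linked in $G-y$, since otherwise the edges at $y$ would not be bridges. Then $G-y+x_1x_2$ is 2-rigid with $x_1x_2$ a bridge, as Theorem \ref{lem1ext} requires.
\end{itemize}
Finally, your reduction to a 2-rigid $G$ by restricting to a rigid component goes the wrong way. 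Nothing available lets global linkedness pass down to a subgraph, whereas adding bridges is harmless.
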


\begin{proof}
Let $\{u,v\}$ be a non-adjacent vertex pair in $G$.
We shall prove,
by
induction on $|V|$,
that if
\begin{equation}
\label{no}
\hbox{there is no}\ {\cal R}_2\hbox{-component}\ H \ \hbox{with}
\ u,v\in V(H)\ \hbox{and}\ \kappa_H(u,v)\geq 3,
\end{equation}
in $G$, then $\{u,v\}$ is not globally 2-linked in $G$.
The cases $|V|\leq 3$ are trivial, so we may
assume that $|V|\geq 4$.
%
First we show that we can add new edges to $G$ so that
the resulting graph is 2-rigid, each of its ${\cal R}_2$-blocks is a complete
subgraph, $u$ and $v$
remain non-adjacent, and (\ref{no}) is preserved.
To prove this first observe that, since 
$K_{|V|}$ minus an edge is 2-rigid for $|V|\geq 4$, there exists a set $B$ of new edges,
$uv\notin B$, such that $G+B$ is 2-rigid and each edge of $B$ is an
${\cal R}_2$-bridge in $G+B$.
Thus the addition of $B$ makes the graph 2-rigid and preserves (\ref{no}).
Next observe that 
adding an edge that connects a pair of non-adjacent vertices
of an $\cR_2$-block does not change the vertex sets of the
${\cal R}_2$-blocks and also preserves (\ref{no}). 
Hence we can make all the 
${\cal R}_2$-blocks complete subgraphs. 
Since
it suffices to show that
$\{u,v\}$ is not globally 2-linked in a supergraph of $G$, in the rest of the proof we may assume
that $G$ is 2-rigid and each ${\cal R}_2$-block is a complete
subgraph of $G$.

We say that a vertex $y\in V$ is {\it reducible} if either 
$y$ belongs to a unique $\cR_2$-block of $G$ and $y$ is incident with
at most one $\cR_2$-bridge, or $y$ belongs to no $\cR_2$-block of $G$ and
$y$ is incident with at most three $\cR_2$-bridges of $G$.

\begin{claim}
\label{claim}
$G$ has at least three reducible vertices.
\end{claim}

\begin{proof}
Let $H_1,H_2,...,H_q$ be the non-trivial $\cR_2$-components and
let $J_1,J_2,...,J_t$ be the $\cR_2$-blocks of $G$. Let
$n_i=|V(J_i)|$ for $1\leq i\leq t$.
We have $n_i\geq 4$ for all $1\leq i\leq t$.
By Lemma \ref{hinge} we have
\begin{equation}
\label{hingerank}
r_2(H_i)=\sum_{J_i\subseteq H_i} (2n_i-3)-k(H_i),
\end{equation}
for $1\leq i\leq q$.

Let $G'$ be the subgraph of $G$ induced by the
non-trivial $\cR_2$-components and 
let $F\subseteq E$ be the set of $\cR_2$-bridges in $G$.
For each $\cR_2$-block $J_i$ let $X_i$ be the set, and $x_i$ be the number of  vertices in $J_i$ that belong to no other ${\cal R}_2$-block, let $Y_i=V(J_i)-X_i$,
and $y_i=|Y_i|$. Then
we have $n_i=x_i+y_i$, $1\leq i\leq t$.
Let $X=\cup_{i=1}^t X_i$, $Y=\cup_{i=1}^t Y_i$, and let
$Z=V-(X\cup Y)$ be the set of vertices that belong to no non-trivial $\cR_2$-component in $G$.
Note that $\sum_{i=1}^t y_i\geq 2|Y|$, since each vertex of $Y$
contributes to the left hand side by at least two.

We shall prove the claim by a counting argument, focusing on
the vertex set $X\cup Z$ in the subgraph $G_F$, where $G_F=(V,F)$.
For each $x\in X$ let $c(x)=\max \{2-d_{G_F}(x), 0\}$ and for each
$z\in Z$ let $c(z)=\max \{4-d_{G_F}(z), 0\}$.
Since $G$ is 2-rigid with $|V|\geq 4$, we have
$c(v)\leq 2$ for all $v\in X\cup Z$.
By counting degrees in $G_F$ we obtain
\begin{equation}
\label{count}
|F|\geq \frac{\sum_{i=1}^t 2x_i + 4|Z|-\sum_{v\in X\cup Z} c(v)}{2} = \sum_{i=1}^t x_i + 2|Z| - \frac{\sum_{v\in X\cup Z} c(v)}{2}.
\end{equation}

\noindent
Let us define $C=\frac{\sum_{v\in X\cup Z} c(v)}{2}$ for simplicity.
Then
\begin{align*}
    2|V|-3&=r_2(G) \qquad (\text{by the 2-rigidity of $G$ }) \\
    &=\sum_{i=1}^q r_2(H_i) + |F|\qquad (\text{by Lemma~\ref{mconnbasic}}(c)) \\
    &\geq \sum_{i=1}^t (2n_i-3) - \sum_{i=1}^q k(H_i) + \sum_{i=1}^t x_i +2|Z| 
- C \qquad (\text{by (\ref{hingerank}) and (\ref{count})}) \\
&=2\sum_{i=1}^t x_i + \sum_{i=1}^t y_i + \sum_{i=1}^t (x_i+y_i) - 3t - \sum_{i=1}^q k(H_i) + 2|Z| - C  \qquad \text{(by $n_i=x_i+y_i$)}\\
&\geq 2|V|+4t - 3t - \sum_{i=1}^q k(H_i) - C \quad \text{(by $x_i+y_i\geq 4$ and $\sum_{i=1}^t 2x_i+\sum_{i=1}^t y_i+2|Z|\geq 2|V|$)}\\
&=2|V| + q - C \qquad \text{(by $t+\sum_{i=1}^q k(H_i)=q$ by Lemma~\ref{hinge}(b)).}
\end{align*}
As $q\geq 0$, we must have $C \geq 3$.
Since $c(v)\leq 2$ for each $v\in X\cup Z$, this implies that
there exist at least three vertices $v$ in $X\cup Z$
with $c(v)\geq 1$. The claim follows, as
these vertices are all reducible.
\end{proof}

By Claim \ref{claim} there exists a reducible vertex $y\in V$
with $y\notin \{u,v\}$.
First suppose that 
$y$ belongs to a unique $\cR_2$-block $J$ of $G$ and $v$ is incident with
no $\cR_2$-bridges. Then $G[N_G(y)]$ is a complete graph on at least three
vertices. Furthermore, 
$G-y$ satisfies (\ref{no}).
We can now deduce
by induction, that $\{u,v\}$ is not globally 2-linked in $G-y$.
Since the neighbour set of $y$ in $G$ is complete, this implies
that $\{u,v\}$ is not globally 2-linked in $G$.

Next suppose that 
$y$ belongs to a unique $\cR_2$-block $J$ of $G$ and $y$ is incident with
one $\cR_2$-bridge, say $xy$. Then $G[N_G(y)-\{x\}]$ is a complete graph on at least three
vertices. Thus
Lemma \ref{lem:geo} implies that
$\{u,v\}$ is not globally 2-linked in $G$.

Finally, suppose that  
$y$ belongs to no $\cR_2$-block of $G$ and
$y$ is incident with at most three $\cR_2$-bridges of $G$.
Since $G$ is 2-rigid, we have $2\leq d_G(v)\leq 3$.
If $\deg_G(y)=2$, then Lemma \ref{lem0ext} implies that 
$\{u,v\}$ is not globally 2-linked in $G$, so we may assume that
$\deg_G(v)=3$.

\begin{claim}
\label{claim2}
Let $N_G(y)=\{x_1,x_2,x_3\}$. Then 
$\{x_i,x_j\}$ is not 2-linked in $G-y$ for some 
$1\leq i<j\leq 3$.
\end{claim}

\begin{proof}
Suppose that each pair of vertices in $N_G(y)$ is 2-linked
in $G-y$. Then the well-known fact that the 0-extension operation preserves
2-rigidity implies that 
$G-yx_i$ is 2-rigid for $1\leq i\leq 3$. This 
contradicts the assumption that the edges incident with
$y$ are $\cR_2$-bridges in $G$.
\end{proof}

By Claim \ref{claim2}
we may suppose, by relabelling the neighbours of $y$, if necessary, that
$\{x_1,x_2\}$ is not 2-linked in $G-y$.
Then
$G'=G-y+x_1x_2$ is 2-rigid, and
$x_1x_2$ is an $\cR_2$-bridge in $G'$.
Since $G$ can be obtained from $G'$ by a 1-extension on edge $x_1x_2$,
Lemma \ref{lem1ext} implies that $\{u,v\}$ is not globally 2-linked in $G$.
This completes the proof of the theorem.
\end{proof}

Theorem \ref{thm:main} implies the 
following statement, which is a weaker version of the
2-dimensional case
of Theorem \ref{theorem:hendrickson}.

\begin{corollary}
Let $G=(V,E)$ be a graph with $|V|\geq 4$ and suppose that
every generic realization of $G$ in $\R^2$ is globally rigid.
Then $G$ is redundantly 2-rigid.
\end{corollary}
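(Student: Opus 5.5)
We derive the corollary from Theorem \ref{thm:main} by contraposition, together with Theorem \ref{theorem:2dimgloballyrigid}-type matroid facts. Suppose every generic realization of $G$ in $\R^2$ is globally rigid. Then every pair $\{u,v\}$ of vertices is globally 2-linked in $G$. Our aim is to show that $G$ is $\cR_2$-connected; then redundant 2-rigidity follows.

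\textbf{Step 1: a non-adjacent pair forces a spanning component.} If $G$ is complete on $|V|\geq 4$ vertices, it is redundantly 2-rigid and we are done. Otherwise choose a non-adjacent pair $\{u,v\}$. By Theorem \ref{thm:main} there is an $\cR_2$-component $H$ with $u,v\in V(H)$. Since $H$ contains the non-adjacent pair $\{u,v\}$ in its vertex set, it has at least three vertices. So $H$ is non-trivial, and it is 2-rigid by Lemma \ref{mconnbasic}(a).

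\textbf{Step 2: every edge lies in that same component.} Take any edge $e=ab$ with $e\notin E(H)$; assume first that $\{a,b\}\not\subseteq V(H)$, say $a\notin V(H)$.
\begin{itemize}
\item If some vertex $w$ is non-adjacent to $a$, Theorem \ref{thm:main} gives a non-trivial component $H_a$ containing $a$ and $w$. We must then show $H_a=H$, and this is where care is needed. Non-trivial components pairwise share at most one vertex by Lemma \ref{mconnbasic}(b).
\item The cleaner route is to fix one vertex $u_0$ of $H$ and apply Theorem \ref{thm:main} to every vertex non-adjacent to $u_0$. All of these then lie in a component together with $u_0$.
\end{itemize}

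\textbf{Step 3 (main obstacle): showing $V(H)=V$.} Write $W=V-V(H)$; suppose $W\neq\emptyset$ and pick $w\in W$.
\begin{itemize}
\item If $w$ is non-adjacent to some $h\in V(H)$, Theorem \ref{thm:main} puts $w,h$ in a non-trivial component $H'\neq H$.
\item If instead $w$ is adjacent to all of $V(H)$, then $|V(H)|\geq 4$ and the rigidity of $H$ make the edges from $w$ to $V(H)$ lie in circuits together with $E(H)$. Hence $w\in V(H)$, a contradiction.
\end{itemize}
So in the remaining case we have two distinct non-trivial components $H$ and $H'$, sharing at most one vertex. We then pick a vertex $x\in V(H)-V(H')$ and a vertex $x'\in V(H')-V(H)$ that are non-adjacent. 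Such a pair exists unless all these pairs are edges. If all are edges, the complete bipartite connections between two rigid graphs with at least $3$ vertices on each side create circuits merging $H$ and $H'$, a contradiction. Given the non-adjacent pair, Theorem \ref{thm:main} yields a component containing both $x$ and $x'$ with $\kappa\geq 3$. Combined with Lemma \ref{mconnbasic}(b) and the 2-rigidity of the components, this should merge components and contradict maximality.

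\textbf{Step 4: conclusion.} Once $V(H)=V$, each remaining edge joins two vertices of the 2-rigid graph $H$. Such an edge is therefore in $\mathrm{cl}(E(H))$ and hence lies in $H$. So $G=H$ is $\cR_2$-connected, and by Lemma \ref{mconnbasic}(a) it is redundantly 2-rigid.

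The delicate point is Step 3, the merging argument. I expect it to reduce to the following fact: two 2-rigid subgraphs sharing at most one vertex, linked by the extra edges forced above, have rank strictly less than the sum of their ranks plus the number of connecting edges, which forces those edges into circuits with both subgraphs.
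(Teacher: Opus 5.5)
Your overall strategy (show that $G$ is $\cR_2$-connected, then apply Lemma \ref{mconnbasic}(a)) can work, but the decisive step is missing. Step~2 only lists two possible routes and never shows that the components produced by Theorem \ref{thm:main} coincide with $H$. Different vertices non-adjacent to $u_0$ may each lie in a different component through $u_0$.

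Step~3 ends with ``this should merge components and contradict maximality'', and this does not follow. Applying Theorem \ref{thm:main} to the non-adjacent pair $x\in V(H)-V(H')$, $x'\in V(H')-V(H)$ gives a \emph{third} component $H''$. It meets $H$ and $H'$ in at most one vertex each, and it forces no edges between $H$ and $H'$. So the rank fact you hope to use at the end has nothing to act on. Locally there is no contradiction at all: three non-trivial components can pairwise share a single vertex. For example, three copies of $K_4$, each two sharing one vertex with three distinct shared vertices, form a rigid graph of rank $15=3\cdot 5$. Hence they are three distinct $\cR_2$-components, and any contradiction must come from a global argument.

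Here is a global argument that closes the gap and replaces your Steps 2--4. Every pair of vertices lies in a common $\cR_2$-component of $G$: for an edge this is its own component, and for a non-adjacent pair it is Theorem \ref{thm:main}. Now add all missing edges inside the vertex sets of the components. By repeated use of Lemma \ref{newcomps}, this changes neither the number nor the vertex sets of the components. The resulting graph is $K_{|V|}$, which is $\cR_2$-connected for $|V|\geq 4$, since any two edges lie in a common $K_4$, an $\cR_2$-circuit. Hence $G$ has a single non-trivial $\cR_2$-component, and Lemma \ref{mconnbasic}(a) finishes.

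The paper takes a shorter route that avoids $\cR_2$-connectivity altogether. Suppose $xy$ were an $\cR_2$-bridge of the rigid graph $G$. Pick a pair $\{u,v\}\neq\{x,y\}$ not contained in a common rigid subgraph of $G-xy$. Since $xy$ is a bridge, no $\cR_2$-connected subgraph of $G$ contains both $u$ and $v$, which contradicts Theorem \ref{thm:main}.
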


\begin{proof}
We may assume that $G$ is 2-rigid. We shall prove that
$G$ has no ${\cal R}_2$-bridges.
For a contradiction suppose that
$G-xy$ is not 2-rigid for some $xy\in E$. 
Since $|V|\geq 4$, there exists a pair $\{u,v\}(\not= \{x,y\})$ such that
$u$ and $v$ do not belong to the same 2-rigid subgraph of $G-xy$.
The fact that ${\cal R}_2$-connected graphs are 2-rigid implies that there is no ${\cal R}_2$-connected subgraph of $G-xy$
which contains both $u$ and $v$. 
As $xy$ is an ${\cal R}_2$-bridge in $G$, the same holds in $G$, too.
By Theorem \ref{thm:main} we obtain that $\{u,v\}$ is not globally 2-linked in $G$, which means that there exists a generic realization $(G,p)$ of $G$ in which
$\{u,v\}$ is not globally 2-linked.
Thus $(G,p)$ is not globally rigid in $\R^2$, a contradiction.
\end{proof}

By using that global rigidity is a generic property, we
obtain the original statement of Theorem \ref{theorem:hendrickson}
in the $d=2$ case. 

\section{The globally linked clusters in $\R^2$}
\label{sec:clu}

The 2-dimensional {\it globally linked closure}, denoted by {\em $\glc_2(G)$}, is the graph obtained from $G$
by adding an edge $uv$ for all pairs $\{u,v\}$ of non-adjacent globally 2-linked vertices of $G$. The
{\it globally 2-linked clusters} of $G$ are the vertex sets of the
maximal complete subgraphs in $\glc_2(G)$.

Theorem \ref{thm:MAIN} and Lemma \ref{newcomps} give the following characterization.

\begin{lemma}
\label{glclusters}
Let $G=(V,E)$ be a graph.
Then
the globally 2-linked clusters of $G$ of size at least four are precisely
the vertex sets of the $\cR_2$-blocks
of $G$.
Furthermore, 
%
an edge $e\in E(\glc_2(G))$ is not induced by a globally 2-linked cluster of
size at least four if and only if $e$ is an ${\cal R}_2$-bridge of $G$.
%
%
\end{lemma}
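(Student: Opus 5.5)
The plan is to read off the edges of $\glc_2(G)$ directly from Theorem \ref{thm:MAIN}(d), and then compare its maximal cliques with the ${\cal R}_2$-blocks. Two facts do most of the work.

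\emph{Step 1.} Condition (d) can be localized to ${\cal R}_2$-components. If $H$ is an ${\cal R}_2$-connected subgraph of $G$, then $E(H)$ lies in a single connected component of ${\cal R}_2(G)$. So, when $uv\notin E$, $\{u,v\}$ is globally 2-linked if and only if some ${\cal R}_2$-component $H_j$ contains $u,v$ with $\kappa_{H_j}(u,v)\geq 3$. This uses monotonicity of $\kappa$ under taking supergraphs.

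\emph{Step 2.} Inside a non-trivial component $H_j$, I will show that $\kappa_{H_j}(u,v)\geq 3$ holds if and only if $u,v$ lie in a common 3-block of $H_j$, or $uv\in E$. If $u,v$ lie in a common 3-block $J$, then $J$ is 3-connected in $\hat H_j$. Each augmented edge $ab$ of $J$ can be replaced by an $a$--$b$ path in $H_j$ through a component of $H_j-\{a,b\}$ not meeting $J$. These replacement paths are internally disjoint for distinct separators, because the separators do not cross (Lemma \ref{nocross}). This gives three internally disjoint $u$--$v$ paths in $H_j$.

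Conversely, suppose no 3-block contains both $u$ and $v$. Then along the tree-like decomposition from Lemma \ref{hinge}, obtained by recursive cleaving, some 2-separator $\{a,b\}$ with $\{u,v\}\neq\{a,b\}$ separates them. Hence $\kappa_{H_j}(u,v)\leq 2$.

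\emph{Step 3.} Combining Steps 1 and 2, a non-adjacent pair is globally 2-linked if and only if it lies in a common ${\cal R}_2$-block. Adjacent pairs are edges of $\glc_2(G)$ by definition. Every ${\cal R}_2$-block $J$ is therefore a clique in $\glc_2(G)$, and $|V(J)|\geq 4$ by Lemma \ref{mconnbasic}(a).

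Now let $K$ be a clique in $\glc_2(G)$ with $|K|\geq 4$. I want to show that $K$ lies inside a single ${\cal R}_2$-block. Each edge of $\glc_2(G)[K]$ is one of two kinds: it lies in a common ${\cal R}_2$-block, or it is an edge of $G$ lying in no ${\cal R}_2$-block. An edge of the second kind is either an ${\cal R}_2$-bridge, or an edge $ab$ of a component lying in several 3-blocks; the latter are still in a common block, so it is of the first kind.

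This is the main obstacle. I must rule out a clique on $\geq 4$ vertices that is spread over several blocks and bridges. The approach is as follows:
\begin{enumerate}
\item Take three vertices of $K$ that are pairwise in common blocks but not all in one block. Using Lemma \ref{mconnbasic}(b), distinct components share at most one vertex. Using the 2-shellable tree structure, 3-blocks share at most a separator pair. Together these force a cyclic pattern of blocks, which would merge into one ${\cal R}_2$-connected piece. Gluing rigid pieces cyclically creates a circuit, which is a contradiction.
\item Triangles formed only of bridges, or mixing bridges and blocks, are impossible. A triangle is rigid, so its three edges would be 2-linked around a cycle of rigid bodies, and the edges would not be bridges.
\end{enumerate}
A cleaner route may be to use Lemma \ref{newcomps}. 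Adding all globally linked non-edges one by one keeps the vertex sets of the ${\cal R}_2$-components unchanged, so $\glc_2(G)$ has the same components with the blocks completed. Any $K_4$ in $\glc_2(G)$ is ${\cal R}_2$-connected, hence lies in one component, and being 3-connected it lies in one 3-block. This gives maximality, and it shows that cliques of size $\geq 4$ are exactly the ${\cal R}_2$-blocks.

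\emph{Step 4.} The second statement follows from the same picture. By Lemma \ref{newcomps}, the ${\cal R}_2$-bridges of $G$ remain bridges in $\glc_2(G)$ and lie in no $K_4$. Every other edge of $\glc_2(G)$ lies in some block, since edges of non-trivial components belong to 3-blocks.
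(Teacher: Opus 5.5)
Your proposal is correct and takes essentially the paper's route. The paper only cites Theorem \ref{thm:MAIN} and Lemma \ref{newcomps}. Your ``cleaner route'' is that argument with the implicit details filled in: localize (d) to an ${\cal R}_2$-component, identify $\kappa\geq 3$ with lying in a common 3-block, then use Lemma \ref{newcomps} to see that $\glc_2(G)$ has the same components and 3-blocks, now complete. Drop item 2 of your first sketch, though: three ${\cal R}_2$-bridges can form a triangle (e.g.\ $G=K_3$), so ``a triangle is rigid'' proves nothing. It is the fourth vertex of the clique, since $K_4$ is an ${\cal R}_2$-circuit, that excludes bridges.
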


The main result of this section, Theorem \ref{cover} below, is motivated by a conjectured characterization of
global 3-rigidity, see \cite[Conjecture 4.9]{survey}.
The truth of this conjecture would imply that if a graph $G$ (which is not a copy of $K_{5,5}$)
is not globally 3-rigid, then 
this fact can be certified by a set $F\subseteq E(G)$ and a family of subsets of $V(G)$ of size at least five
which forms a 4-shellable "non-trivial tight cover" of $G-F$, see \cite{survey}. 
The theorem implies that there is a similar certificate of being not globally 2-rigid,
and it can be obtained from the globally 2-linked clusters of the graph. It is conceivable that
a similar phenomenon holds in $\R^3$.

We shall need one more lemma on ${\cal R}_2$-components in the proof.
For a graph $G=(V,E)$ and its subgraph $H$ we call $(N_G(V-V(H))\cap V(H)$
the {\it vertices of attachment} of $H$.

\begin{lemma}
\label{3comps}
Let $G=(V,E)$ be a graph
with at least three ${\cal R}_2$-components. Then
$G$ has at least three ${\cal R}_2$-components
with at most two vertices of attachment.
%
%
\end{lemma}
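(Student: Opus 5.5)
We prove Lemma~\ref{3comps} by contracting the $\mathcal{R}_2$-components into a tree-like auxiliary structure and then counting leaves. Trivial components (single $\mathcal{R}_2$-bridges) count as components too, and a single edge has at most two vertices of attachment automatically.

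First we reduce to the connected case. If $G$ is disconnected, we apply the argument to each connected component of $G$; a component of $G$ that consists of a single $\mathcal{R}_2$-component has no vertices of attachment at all. Summing the counts over the components of $G$ then gives the required three.

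So assume $G$ is connected. We build the bipartite incidence graph $T$ whose nodes are the $\mathcal{R}_2$-components $H_1,\dots,H_q$ together with the vertices of $G$ that belong to at least two components. A component node is joined to a vertex node whenever the vertex lies in that component. By Lemma~\ref{mconnbasic}(b), two components share at most one vertex. The number of vertices of attachment of $H_i$ equals its degree in $T$, because every edge of $G$ leaving $V(H_i)$ lies in another component, and that component meets $H_i$ in an attachment vertex.

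The graph $T$ need not be a tree: a cycle of components, for example a cycle of bridges forming a polygon, may create cycles in $T$. The key step is therefore the following bound. The number of components of degree at least three in $T$ is controlled by the number of components of degree at most two in $T$, and the latter is at least three whenever $q\ge 3$. For this step I will reuse the rank-counting idea from Claim~\ref{claim}.

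I first try an inequality of the form
\[
r_2(G)=\sum_i r_2(H_i)\le 2|V|-3 ,
\]
where $r_2(H_i)=2|V(H_i)|-3$ if $H_i$ is rigid, which holds for every non-trivial component and for every bridge. Writing $|V|=\sum_i |V(H_i)|-\sum_{w}(m_w-1)$, where $m_w$ is the number of components containing $w$, the inequality becomes
\[
\sum_w 2(m_w-1)\le 3(q-1),
\]
after first making $G$ rigid by adding bridges, which preserves the structure we need. If every component except at most two had at least three attachment vertices, then counting the incidences $\sum_w m_w$ would give $\sum_w 2(m_w-1)>3(q-1)$. The resulting contradiction is the expected main obstacle, because it requires care with the vertices $w$ with $m_w\ge 2$ and with the degree-$\ge 3$ components. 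A cleaner alternative is to pass to a maximal rigid supergraph: adding bridges preserves the components, and in a rigid graph the degree count mirrors the inequality chain in Claim~\ref{claim} with $C\ge 3$.

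If the counting turns out too loose, my fallback is to take a minimal counterexample and contract a component with exactly one attachment vertex, which is a leaf of $T$. Induction on $q$ then preserves the leaves elsewhere, while the base cases $q=3,4$ are checked by hand using rigidity.
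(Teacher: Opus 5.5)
\textbf{There is a genuine gap.} Your count is the paper's count. The inequality $\sum_w 2(m_w-1)\le 3(q-1)$ is exactly its rank inequality. Since $2(m_w-1)\ge m_w$ whenever $m_w\ge 2$, it gives $\sum_i y_i\le 3q-3$, where $y_i$ is the number of attachment vertices of $H_i$.

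This contradicts ``all but at most two components have $y_i\ge 3$'' only if the two exceptional components also have $y_i\ge 2$, i.e.\ only if \emph{every} component has at least two attachment vertices. That needs $G$ to be 2-connected, but your reduction only reaches connected graphs. In a connected graph with a cut vertex the global count on $G$ is too weak. For example, the profile $y=(1,1,3,\dots,3)$ gives $\sum_i y_i=3q-4\le 3q-3$, so no contradiction arises. Such profiles are in fact impossible, but seeing this requires applying the count to a 2-connected piece of $G$ separately. You flag this as the main obstacle yourself and leave it open, so the proof is incomplete at its crucial step.

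Neither workaround closes the gap.
\begin{itemize}
\item Making $G$ rigid by adding bridges is unnecessary, because $r_2(G)\le 2|V|-3$ holds for every graph. It is also harmful. Each added bridge is a new trivial $\mathcal{R}_2$-component and can create new attachment vertices of the original components. So the three low-attachment components found in the augmented graph could all be the new bridges, which tells you nothing about $G$.
\item The leaf-removal induction does not apply when $G$ has a cut vertex but no component with a single attachment vertex, for instance when a leaf block contains several components. Its base cases are also left unspecified.
\end{itemize}

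The missing step is the paper's reduction: induct on $|V|$ and split off the disconnected and cut-vertex cases using Lemma~\ref{mconnbasic}. In that splitting you must note two things. A cut vertex can add an attachment vertex to at most one component on each side. A side may contain only one or two $\mathcal{R}_2$-components, so the induction hypothesis does not apply to it directly; your disconnected reduction glosses over this as well. In the remaining 2-connected case, no component spans $V$ when $q\ge 3$. A component with $y_i\le 1$ would therefore give a cut vertex or a disconnection. Hence $y_i\ge 2$ for all $i$, and your count then finishes the argument.
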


\begin{proof}
The proof is by induction on $|V|$.
For $|V|=3$ we have $G=K_3$, in which case the statement is clear.
Suppose $|V|\geq 4$.
If $G$ is disconnected, or has a cut-vertex, then the lemma follows easily by induction,
using Lemma \ref{mconnbasic}.
So we may assume
that $G$ is $2$-connected.
Let $H_1,H_2,\dots,H_q$ be the ${\cal R}_2$-components of $G$,
let $n_i=|V(H_i)|$, and let $y_i$ be the number of attachment vertices
of $H_i$. Let $x_i=n_i-y_i$.
Since $G$ is $2$-connected, we have $y_i\geq 2$ for all $1\leq i\leq q$.
For a contradiction 
suppose that $n_i\geq y_i\geq 3$ for all but at most
two ${\cal R}_2$-components of $G$.
Since each $H_i$ is 2-rigid, we can use Lemma \ref{mconnbasic}(c) to deduce that
$2|V|-3\geq r_2(G)=\sum_{i=1}^q r_2(H_i) = \sum_{i=1}^q (2n_i-3) = 2\sum_{i=1}^q n_i - 3q=$
$=(2\sum_{i=1}^q x_i + \sum_{i=1}^q y_i) + \sum_{i=1}^q y_i - 3q \geq 2|V| + 3q - 2 - 3q \geq 2|V|-2,$
a contradiction.
\end{proof}

Let $G=(V,E)$ be a graph and let ${\cal X}$ be a family of subsets of $V$.
For a pair $u,v\in V$ let $h_{\cal X}(uv)$ denote the number of sets $X\in \cal X$
with $u,v\in X$. Let $H({\cal X})=\{ \{u,v\}: h_{{\cal X}}(uv)\geq 2\}$.

\begin{theorem}\label{cover}
Let $G=(V,E)$ be a graph, let ${\cal C}=\{C_1,C_2,...,C_s\}$ be the  
globally 2-linked clusters of $G$ of size at least four, and let $F\subseteq E$ be the set of edges of $G$ not induced by the members of ${\cal C}$.  
Then 
\begin{equation}
\label{tight}
r_2(G) = |F| + \sum_{i=1}^s (2|C_i|-3) - \sum_{\{u,v\}\in H({\cal C})} (h_{\cal C}(uv)-1).
\end{equation}
Furthermore, ${\cal C}$ has a 3-shellable ordering.
%
\end{theorem}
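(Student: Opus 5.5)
By Lemma \ref{glclusters}, the members of ${\cal C}$ are exactly the vertex sets of the $\cR_2$-blocks of $G$. Moreover, the edges not induced by any member of ${\cal C}$ are exactly the $\cR_2$-bridges: every edge of a non-trivial $\cR_2$-component lies in some 3-block of its augmented graph, and hence is induced by an $\cR_2$-block. So $F$ is the set of $\cR_2$-bridges. Let $H_1,\dots,H_q$ be the non-trivial $\cR_2$-components. By Lemma \ref{mconnbasic}(c), $r_2(G)=|F|+\sum_{j=1}^q r_2(H_j)$. Lemma \ref{hinge}(a) gives $r_2(H_j)=\sum_{J\subseteq H_j} r_2(J)-k(H_j)$, and each 3-block $J$ is $\cR_2$-connected, hence 2-rigid with at least four vertices, so $r_2(J)=2|V(J)|-3$ by Theorem \ref{combrigid}.

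To obtain (\ref{tight}) it then suffices to show $\sum_j k(H_j)=\sum_{\{u,v\}\in H({\cal C})}(h_{\cal C}(uv)-1)$. The plan is to identify the pairs with $h_{\cal C}(uv)\ge 2$. If $u,v$ lie in two blocks from different components $H_i\neq H_j$, then $|V(H_i)\cap V(H_j)|\ge 2$, which contradicts Lemma \ref{mconnbasic}(b). So all blocks containing $\{u,v\}$ lie in a single component $H$, and $h_{\cal C}(uv)=h_H(uv)$ if $\{u,v\}$ is a 2-separator of $H$. It remains to check the converse: two distinct 3-blocks of $H$ sharing $u,v$ force $\{u,v\}$ to be a 2-separator. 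The 3-blocks arise by recursive cleaving along 2-separators, and any two 3-blocks meet in at most a separator pair, which is then a 2-separator of $H$ by Lemma \ref{nocross}. Alternatively, one can use the fact that $uv$ lies in $\hat H$ in both blocks, and an edge of $\hat H$ lying in two 3-blocks spans a 2-separator. Summing over 2-separators then gives the identity.

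For the 3-shellable ordering, Lemma \ref{hinge}(c) gives a 2-shellable ordering of the block vertex sets within each component $H_j$. The components must then be glued. The plan is to use Lemma \ref{3comps}, or a direct induction, to order the non-trivial components $H_1,\dots,H_q$ so that each $V(H_j)$ meets $\bigcup_{i<j}V(H_i)$ in at most one vertex. Then, when the blocks of $H_j$ are appended in its internal 2-shellable order, each block meets the earlier sets in at most $2+1=3$ vertices.

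The main obstacle is producing this component ordering. Components pairwise share at most one vertex, but a component can touch several earlier components in distinct vertices, so the "at most one vertex" property need not hold. I would therefore weaken the requirement and aim for the bound $|V(H_j)\cap\bigcup_{i<j}V(H_i)|\le 3$ only after the within-component choice of the first block. The idea is to choose the last component by a counting argument like the proof of Lemma \ref{3comps}, applied to the graph after deleting bridges and adding rigidity. Since $G$ can be made 2-rigid by adding bridges without changing ${\cal C}$, rank counting shows that some component has at most two vertices of attachment to the rest. We remove it, order the rest by induction, and append its blocks last. Using Lemma \ref{hinge}(c), the first appended block contains both attachment vertices, provided they are joined by an edge (which can be added without changing the blocks, by Lemma \ref{newcomps}). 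That block then meets earlier sets in at most 2 vertices, and later blocks of the component meet earlier sets in at most $2+1\le 3$ vertices. This gives the required 3-shellable ordering.
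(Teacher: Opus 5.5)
\textbf{The overall route matches the paper's.} You prove the rank identity from Lemma \ref{glclusters}, Lemma \ref{mconnbasic}(c), Lemma \ref{hinge}(a) and the 2-rigidity of the blocks. You then build the shelling by induction on the number $q$ of non-trivial $\cR_2$-components: remove one component with at most two vertices of attachment, and append its blocks in a 2-shellable order from Lemma \ref{hinge}(c).

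\textbf{One step of the shelling argument is false.} You cannot add the edge $xx'$ joining the two attachment vertices ``without changing the blocks, by Lemma \ref{newcomps}''. That lemma only preserves the vertex sets of the $\cR_2$-components. It says nothing about 3-blocks, and adding an edge between vertices that lie in no common 3-block destroys 2-separators and merges 3-blocks. For example, let $H_q$ be two copies of $K_4$ sharing an edge $ab$, with $x,x'$ vertices of different copies outside $\{a,b\}$. Then $H_q+xx'$ is 3-connected, so it has a single 3-block and ${\cal C}$ would change. In this situation no member of ${\cal C}$ contains both attachment vertices, so ``the first appended block contains both attachment vertices'' cannot be arranged in general.

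\textbf{The repair is the paper's, and your count already fits it.} Apply Lemma \ref{hinge}(c) with $e$ an edge of $H_q$ incident with just one attachment vertex $x$. The first block of $H_q$ meets the earlier sets only in attachment vertices, so in at most two. Every later block meets the earlier blocks of $H_q$ in at most two vertices, and can additionally contain only the other attachment vertex $x'$. That gives at most $2+1=3$; in the example above the second $K_4$ meets the earlier sets in exactly $\{a,b,x'\}$.

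\textbf{Smaller points.}
\begin{itemize}
\item The detour through making $G$ 2-rigid by adding bridges is unnecessary. If those bridges are kept as $\cR_2$-components it is useless, since every bridge trivially has at most two vertices of attachment. Simply apply Lemma \ref{3comps} to $G-F$ when $q\ge 3$. For $q=2$ the two components share at most one vertex, so either one can be taken last.
\item In the first part, your ``alternative'' argument is circular. Two vertices in a common 3-block need not be adjacent in $\hat H$; adjacency follows only after you know $\{u,v\}$ is a 2-separator. Use the cleaving argument instead: the 2-separator of a cleaved piece separating two distinct 3-blocks is a 2-separator of $H$ directly from the cleaving construction. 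This step does not need Lemma \ref{nocross}.
\end{itemize}
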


\begin{proof}
We may assume that $G={\rm glc}_2(G)$.
By Lemma \ref{glclusters} $F$ is the set of ${\cal R}_2$-bridges of $G$.
Let $H_1,H_2,...,H_q$ be the non-trivial ${\cal R}_2$-components of $G$.
By Lemma \ref{mconnbasic}(b) $H({\cal C})$ consists of the vertex pairs of the
2-separators $\{a,b\}$ of the ${\cal R}_2$-components, and $h_{\cal C}(ab)=h_{H_i}(ab)$, where
$a,b\in V(H_i)$ and $h_{H_i}(ab)$ denotes the number of ${\cal R}_2$-blocks in $H_i$
that contain both $a$ and $b$ (as defined in  Section \ref{sec:mco}). We can now use Lemma \ref{mconnbasic}(c), Lemma \ref{hinge}(a), and the fact that
each ${\cal R}_2$-block is 2-rigid to deduce that
$$r_2(G)= |F| + \sum_{i=1}^q r_2(H_i)= |F|+\sum_{i=1}^q \ (\sum \{r_2(J) : J\ \hbox{is an}\ {\cal R}_2\hbox{-block of}\ H_i\}) - k(H_i))=$$
$$= |F| + \sum_{i=1}^s (2|C_i|-3) - \sum_{u,v\in H({\cal C})} (h_{\cal C}(uv)-1),$$
which proves the first part of the statement.

We prove the second part by induction on $q$. For $q=1$ we are done by
Lemma \ref{hinge}(c).
Suppose that $q\geq 2$. By Lemma \ref{3comps}, applied to $G-F$, there is a non-trivial ${\cal R}_2$-component, say $H_q$, of $G$ with at most
two vertices of attachment.
Let $G'$ be obtained from $G$ deleting the non-attachment vertices and the edges of $H_q$.
The non-trivial ${\cal R}_2$-components of $G'$ are $H_1,H_2,...,H_{q-1}$. 
By induction, 
the globally 2-linked clusters of $G'$
have a 3-shellable ordering.
We can extend this 
to
a 3-shellable ordering of $\cal C$ by adding a 2-shellable ordering of
the vertex sets of the ${\cal R}_2$-blocks of $H_q$ in such a way that if there
exists an attachment vertex $x\in V(H_q)$, then we choose such an ordering in which the first ${\cal R}_2$-block
contains 
an edge of $H_q$ incident with $x$. Such an ordering exists by Lemma \ref{hinge}(c).
Since $H_q$ has at most two vertices of attachment, its first
${\cal R}_2$-block has at most two vertices in common with the preceeding sets.
Furthermore, the choice of $x$ and the 2-shellability within $H_q$ imply that 
the extended ordering is 3-shellable.
\end{proof}

In Theorem \ref{cover}
we cannot replace 3-shellable by 2-shellable, see
Figure~\ref{fig:example}.

\begin{figure}[t]
\centering
\includegraphics[scale=0.7]{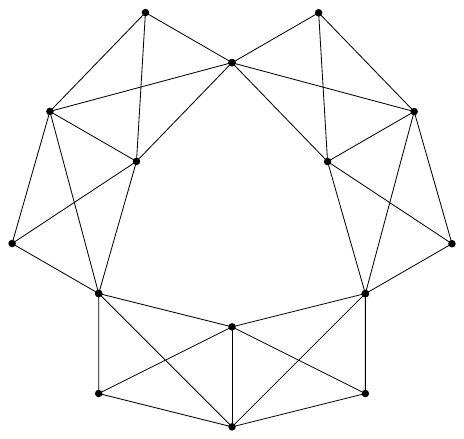}
\caption{The globally 2-linked clusters in this graph are the vertex sets of the
six copies of $K_4$. No ordering of these sets is 2-shellable.}
\label{fig:example}
\end{figure}

\section{Globally linked pairs in $d$-dimensional body-bar graphs}
\label{sec:bod}

Let $H=(V,E)$ be a loopless multigraph.
The {\it body–bar graph induced
by $H$}, denoted by $G_H$, is the graph obtained from $H$ by replacing each vertex $w\in V$ by a complete
graph $B_w$ (the ‘body’ of $w$) on $\deg_H(w)$ vertices and replacing each edge $wz$ by an edge (a ‘bar’)
between $B_w$ and $B_z$ in such a way that the bars are pairwise disjoint.
The $d$-rigidity and the global $d$-rigidity of body-bar graphs have been characterized in terms of
the "tree-connectivity"
of the underlying 
multigraph $H=(V,E)$.
For a partition $\cal P$ of $V$ let 
$e_H({\cal P})$ denote the
number of edges of $H$ that connect distinct parts of $\cal P$. We say that $H$ is 
{\it $k$-tree-connected}, for some integer $k\geq 1$, if
\begin{equation}
\label{htc}
e_H({\cal P}) \geq  k(t-1)
\end{equation}
for all partitions ${\cal P}$ of $V$ into $t\geq 1$ parts.
We call $H$
{\it highly $k$-tree-connected} 
%
if (\ref{htc}) holds with strict inequality 
whenever $t\geq 2$. Note that a single vertex, $K_1$, is highly $k$-tree connected for
all $k\geq 1$.
The following theorem is due to Tay.

\begin{theorem} \cite{tay}
\label{thm:tay}
Let $H=(V,E)$ be a multigraph with $|V|\geq  2$ and $|E|\geq  2$ and let $G_H$ be the body–bar graph
induced by $H$. Let $d\geq 1$ be an integer. Then 
$G_H$ is $d$-rigid 
if and only if 
$H$ is $\binom{d+1}{2}$-tree-connected.
\end{theorem}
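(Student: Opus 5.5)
The plan is to compare the rigidity matroid of $G_H$ with a count on $H$. First I will compute the rank of $G_H$ in the $d$-dimensional rigidity matroid. Each body $B_w$ is a complete graph on $\deg_H(w)$ vertices. I will enlarge each body by adding dummy vertices, or by padding with a rigid cluster, so that every body becomes a rigid body with $\binom{d+1}{2}$ degrees of freedom. Then I will use the standard body-bar count, via Tay's theorem on $H$ and its subgraphs:
$$r_d(G_H)=\sum_{w}r_d(B_w)+\min\Bigl\{\,\sum_{P\in\mathcal P}\bigl(\text{edges inside }P\text{ used}\bigr)\,\Bigr\},$$
which reduces to
$$D(|V|-1)\ \text{ bars contributing exactly when } e_H(\mathcal P)\ge D(t-1),\qquad D=\tbinom{d+1}{2}.$$
The cleaner statement I actually need is this: $G_H$ is $d$-rigid if and only if $e_H(\mathcal P)\ge D(t-1)$ for every partition $\mathcal P$.

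\emph{Forward direction.} Suppose $G_H$ is $d$-rigid. Then the vertices of each body $B_w$ span at least $d+1$ points, or else $B_w$ is small. Contract each body to a rigid body. An infinitesimal motion of $(G_H,p)$ then corresponds to an assignment of a screw $S_w\in\R^{D}$ to each $w\in V$, subject to one linear constraint $\langle S_w-S_z,\ell_e\rangle=0$ per bar $e=wz$, where $\ell_e$ is the Plücker line of the bar. Rigidity means that the only solutions are $S_w$ constant. Fix a partition $\mathcal P$ with $t$ parts. Restrict to motions that are constant on each part; these form a $Dt$-dimensional space. Only the inter-part bars impose constraints, so rigidity needs $e_H(\mathcal P)\ge D(t-1)$. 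The hypotheses $|V|\ge2$ and $|E|\ge2$ ensure that small bodies (with $\deg_H(w)<d$) do not create degenerate cases. If they occur, I would treat the at most $\deg(w)$ points of $B_w$ as a rigid body in general position, which is still a body with $D$ degrees of freedom once it has been touched by the bars.

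\emph{Reverse direction (the main obstacle).} Assuming $H$ is $D$-tree-connected, I must produce a generic $p$ for which the body-bar constraint matrix has rank $D(|V|-1)$. By Nash-Williams--Tutte, $H$ contains $D$ edge-disjoint spanning trees $T_1,\dots,T_D$. I will specialize the bars so that every bar in $T_i$ has the Plücker coordinate vector $\ell_e$ equal, or close, to the $i$-th standard basis vector $e_i$ of $\R^D$. The constraint matrix then becomes block diagonal, with $D$ incidence matrices of spanning trees, each of rank $|V|-1$. So the rank is $D(|V|-1)$ at this special position, and by lower semicontinuity it is at least this at generic positions. The delicate point is that Plücker vectors must satisfy the Grassmann relation. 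Since $e_i$ may not be a decomposable line, I will instead choose $D$ linearly independent actual lines $L_1,\dots,L_D$ (they exist, as lines span $\R^D$) and assign $\ell_e=L_i$ for $e\in T_i$. The resulting matrix is then the block matrix transformed by an invertible change of basis, so its rank is preserved. Finally, I have to realise this by a point configuration in which the bar endpoints lie on the prescribed lines while each body's points are placed freely. This is possible because the bars are pairwise disjoint, so each bar's two endpoints can be placed independently on $L_i$. I would then pass to a generic perturbation.

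At the end I will translate body rigidity back to bar-and-joint rigidity of $G_H$. Each $B_w$ is complete, hence rigid, and a framework built from rigid bodies linked by bars is rigid exactly when the body-bar system is. This gives the equivalence.
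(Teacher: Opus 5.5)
The paper gives no proof of this statement; it quotes it from Tay. Your outline is the classical screw-coordinate argument, and the sufficiency half is sound. The body-bar constraint matrix depends only on the lines of the bars. Realising every bar of $T_i$ on the line $L_i$ therefore gives rank $D(|V|-1)$, where $D=\binom{d+1}{2}$, and lower semicontinuity transfers this rank to a generic $p$. At a generic $p$, every infinitesimal motion of the complete graph $B_w$ is the restriction of a screw, so every infinitesimal motion of $(G_H,p)$ is trivial. Your remark that the body points are \emph{placed freely} is inaccurate: every vertex of $G_H$ is a bar endpoint and so lies on some $L_i$. Nothing depends on it, though, because the special configuration is only used to bound the rank of the body-bar matrix.

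The gap is in necessity, in your treatment of bodies with $\deg_H(w)<d$. The hypotheses $|V|\geq 2$ and $|E|\geq 2$ do not exclude such bodies; take $d=3$ and a vertex of degree $3$. Such a body also does not carry $D$ degrees of freedom, with or without bars. Its $m<d$ generic points span an $(m-1)$-flat, and the nonzero infinitesimal rotations about that flat fix every point of $B_w$. Every bar at $w$ is attached at a point of $B_w$, so these screws also satisfy all bar constraints. The screw system then has non-constant solutions that are not motions of $G_H$. Your inference from rigidity of $G_H$ to \emph{the only solutions are constant} is therefore unjustified until small bodies have been ruled out.

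The missing step is short. A $d$-rigid graph on at least $d+1$ vertices has minimum degree at least $d$. Each vertex of $B_w$ has degree exactly $\deg_H(w)$ in $G_H$, so rigidity of $G_H$ itself forces $|V(B_w)|\geq d$ for every $w$. Then each $S_w$ is determined by the velocities on $B_w$, and your partition count goes through. If instead $2|E|\leq d$, rigidity would force $G_H$ to be complete. That is impossible for $|E|\geq 2$, since each vertex of $G_H$ has exactly one neighbour outside its body.

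The statement, and your count, also tacitly require $H$ to have no isolated vertices, because their bodies are empty. Finally, drop the padding idea from your first paragraph: adding vertices to a body changes the graph whose rigidity is in question.
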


Globally $d$-rigidity for body-bar graphs turns out to be the same as redundant $d$-rigidity, in the following sense. 

\begin{theorem} \cite{CJW}
\label{thm:cjw}
Let $H=(V,E)$ be a multigraph with $|V|\geq  2$ and $|E|\geq  2$ and let $G_H$ be the body–bar graph
induced by $H$. Let $d\geq 1$ be an integer. Then 
$G_H$ is globally $d$-rigid 
if and only if 
$H$ is highly $\binom{d+1}{2}$-tree-connected.
\end{theorem}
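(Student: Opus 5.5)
The plan is to prove the two directions separately. Throughout, write $D=\binom{d+1}{2}$.

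\emph{Necessity.} Suppose $G_H$ is globally $d$-rigid. When $G_H$ has at least $d+2$ vertices, Theorem \ref{theorem:hendrickson} gives that $G_H$ is redundantly $d$-rigid. The small cases, where $G_H$ has at most $d+1$ vertices, should be checked directly.
\begin{itemize}
\item Take any bar $e$, corresponding to an edge $e$ of $H$. Then $G_H-e$ is $d$-rigid.
\item The graph $G_H-e$ is the body-bar graph $G_{H-e}$ with one extra vertex added in each of the two bodies incident with $e$. Each extra vertex is joined to a complete body, so it does not affect $d$-rigidity, provided the body is large enough. Hence $G_{H-e}$ is $d$-rigid.
\item By Theorem \ref{thm:tay} (after handling the degenerate cases $|E|\le 2$ separately), $H-e$ is $D$-tree-connected for every edge $e$.
\item Now suppose $H$ were not highly $D$-tree-connected. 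Then some partition ${\cal P}$ into $t\ge 2$ parts has $e_H({\cal P})=D(t-1)\ge 1$. So there is a crossing edge $e$, and $H-e$ violates (\ref{htc}) for ${\cal P}$, a contradiction.
\end{itemize}
Conversely, the same reasoning shows that high $D$-tree-connectivity of $H$ is equivalent to $H-e$ being $D$-tree-connected for every $e$. Since body edges are never bridges once the bars are redundant, this is equivalent to $G_H$ being redundantly $d$-rigid.

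\emph{Sufficiency.} Suppose $H$ is highly $D$-tree-connected. I would show that a generic realization $(G_H,p)$ has an equilibrium stress $\omega$ whose stress matrix has rank $|V(G_H)|-d-1$. By Connelly's sufficient condition for generic global rigidity, this gives global rigidity.
\begin{itemize}
\item By the equivalence above, every bar lies in an $\cR_d$-circuit of $G_H$. Each such circuit carries a nonzero stress that is nonzero on the bar. A generic linear combination of these stresses is nonzero on every bar.
\item The complete bodies $B_w$ can absorb arbitrary stress on their internal edges.
\item I would argue that the stress can be chosen so that its kernel consists only of affine images of $p$. Let $\Omega$ be the stress matrix and take a vector $z$ in the kernel of $\Omega$. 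Restricted to each body, $z$ must be affine: add a generic full-rank stress of the complete body $B_w$, which exists when $|B_w|\ge d+2$, and pad small bodies otherwise. Then the nonzero stresses on the bars force the affine maps on adjacent bodies to agree along enough independent directions.
\end{itemize}

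\emph{Expected obstacle.} The main difficulty is turning ``the affine maps on adjacent bodies agree along the bars'' into ``all the affine maps coincide''. This is where tree-connectivity (via Nash-Williams--Tutte) must be used in a stress, rather than motion, formulation.

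If the stress argument stalls, my fallback is induction on $|E(H)|$ using operations that preserve high $D$-tree-connectivity. These would be adding a bar, and splitting off or pinching edges at a vertex, in the style of known constructive characterizations of highly $k$-tree-connected multigraphs. Global rigidity would be preserved at each step via 1-extension-type arguments, and the base case would be a single body, a complete graph, which is globally rigid.
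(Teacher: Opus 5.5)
The paper gives no proof of this statement. It is imported from \cite{CJW} and used as a black box in Section~\ref{sec:bod}, so your argument has to stand on its own. Your necessity direction does. Theorem~\ref{theorem:hendrickson} gives redundant rigidity. The case $|V(G_H)|\le d+1$ is vacuous: for $|E|\ge 2$ the graph $G_H$ has $2|E|\ge 4$ vertices and is never complete, so it cannot be globally rigid on at most $d+1$ vertices. Deleting the bar-less vertex of a body preserves rigidity because its neighbourhood is a clique, and no size condition is needed in this direction. Theorem~\ref{thm:tay} then makes every $H-e$ $\binom{d+1}{2}$-tree-connected, and deleting a crossing edge of a tight partition gives the contradiction.

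The sufficiency direction, however, is not proved. The step ``restricted to each body, $z$ must be affine'' does not follow. Your matrix is $\Omega=\Omega_{\rm c}+\sum_w\Omega_w$, where $\Omega_{\rm c}$ comes from the circuit stresses, is indefinite, and also loads body edges. So $\ker\Omega$ need not be the intersection of the kernels of the summands, and nothing forces $\Omega_w z=0$.

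What can be salvaged is only a reduction. Take each $\Omega_w$ positive semidefinite of rank $|B_w|-d-1$ and scale the body part up. Then it would suffice that the compression of $\Omega_{\rm c}$ to the $(d+1)|V|$-dimensional space of bodywise affine vectors has only the globally affine ones in its kernel. For such a vector, with $z=\ell_w\circ p$ on $B_w$, equilibrium gives $(\Omega_{\rm c}z)_i=\omega_{ii'}\,(\ell_w-\ell_{w'})(p(i'))$ for the bar $ii'$ with $i\in B_w$, $i'\in B_{w'}$. What remains is therefore a rank statement for a body-level quadratic form built from the bar stresses. That is the entire content of the theorem, and the place where high $\binom{d+1}{2}$-tree-connectivity has to be used. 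You leave it as the ``expected obstacle''. A stress that is nonzero on every bar is just redundant rigidity restated and yields no rank bound.

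The fallback is likewise only a plan, with both ingredients missing:
\begin{enumerate}
\item a constructive characterization of highly $\binom{d+1}{2}$-tree-connected multigraphs by edge additions and pinchings;
\item a proof that the body-bar counterpart of a pinching preserves generic global $d$-rigidity. A pinching inserts a whole new body rather than one vertex of degree $d+1$, so 1-extension results do not apply verbatim.
\end{enumerate}
Its base case is also wrong. A loopless one-vertex $H$ has an empty body. The smallest relevant multigraph is two vertices joined by $\binom{d+1}{2}+1$ parallel edges. It gives two copies of $K_{\binom{d+1}{2}+1}$ joined by a perfect matching, whose global $d$-rigidity must itself be proved.
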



It was conjectured in \cite{CJW} that
a non-adjacent pair $\{u,v\}$ is globally $d$-linked in $G_H$
if and only if there is a globally $d$-rigid subgraph of $G_H$ that contains both $u$ and $v$.
In the proof of this conjecture we need some further notions and  
structural results.

Let $k\geq 1$ be an integer.
A maximal $k$-tree-connected 
subgraph of a
multigraph $H=(V,E)$ is called a 
{\it $k$-superbrick} of $H$.
It was shown in \cite{JJbrick} that the vertex sets of the 
superbricks of $H$ form a partition of $V$. 
Let ${\cal M}_k(H)$ be the matroid
union of $k$ copies of the cycle matroid of $H$.
The $k$-superbricks correspond to the non-trivial connected components of this matroid:

\begin{lemma} \cite[Lemma 2.11]{JJbrick}
\label{jjbridge}
Let $H=(V,E)$ be a multigraph and let $k\geq 1$ be an integer. 
Let $F\subseteq E$ be the set of bridges in ${\cal M}_k(H)$.
Then the $k$-superbricks of $G$ are the connected components of
the graph $(V,E-F)$.
\end{lemma}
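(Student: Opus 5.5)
The plan is to work with the matroid ${\cal M}_k(H)$ through its circuits and the Tutte--Nash-Williams theorem. I will use two standard facts: a set $X\subseteq E$ is independent in ${\cal M}_k(H)$ if and only if it can be covered by $k$ forests, and a multigraph is $k$-tree-connected if and only if it contains $k$ edge-disjoint spanning trees. Combining these with Nash-Williams' covering theorem, $X$ is independent if and only if $|Y|\leq k(|V(Y)|-1)$ for every nonempty $Y\subseteq X$.

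One caveat about the reading of the statement. If $H$ consists of exactly $k$ edge-disjoint spanning trees, then every edge is a bridge of ${\cal M}_k(H)$, yet $V$ is $k$-tree-connected. So the identification must be understood as the paper phrases it just before the lemma: the superbricks with at least two vertices (as in \cite{JJbrick}) correspond to the non-trivial components of ${\cal M}_k(H)$, that is, to the maximal \emph{highly} $k$-tree-connected subgraphs. The remaining vertices are singleton parts. I will prove the statement in this form: the components of $(V,E-F)$ having at least one edge are exactly the maximal highly $k$-tree-connected subgraphs.

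Step 1 shows that every circuit $C$ of ${\cal M}_k(H)$ spans a highly $k$-tree-connected subgraph. By the count above, a circuit satisfies $|C|=k(|V(C)|-1)+1$. Moreover, $C-e$ is independent for each $e\in C$, so $C-e$ is $k$ forests with $k(|V(C)|-1)$ edges in total, i.e.\ $k$ edge-disjoint spanning trees of $V(C)$. Hence $C-e$ is $k$-tree-connected for every $e\in C$. For a partition ${\cal P}$ of $V(C)$ into $t\geq 2$ parts, choose $e$ crossing ${\cal P}$ (such an edge exists since $C$ is connected). Then $e_C({\cal P})\geq k(t-1)+1$, so $C$ is highly $k$-tree-connected.

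Step 2 is the converse. Let $K$ be highly $k$-tree-connected and $e\in E(K)$. Then $K-e$ is still $k$-tree-connected: for partitions not crossed by $e$ nothing changes, and for crossed partitions we lose one edge from a strict inequality. So $K-e$ contains $k$ edge-disjoint spanning trees $T_1,\dots,T_k$ of $V(K)$. The set $T_1\cup\dots\cup T_k\cup\{e\}$ has $k(|V(K)|-1)+1$ edges on $V(K)$, so it is dependent, and the circuit it contains must use $e$. Hence no edge of a highly $k$-tree-connected subgraph lies in $F$.

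Step 3 uses a gluing fact: the union of two highly $k$-tree-connected subgraphs sharing a vertex is highly $k$-tree-connected. This follows by restricting a partition of the union to each piece and counting crossing edges, which is routine. The argument then concludes as follows. By Step 1, each edge of $E-F$ lies in a highly $k$-tree-connected circuit, and by gluing along shared vertices each component of $(V,E-F)$ is highly $k$-tree-connected. By Step 2, any highly $k$-tree-connected subgraph uses only edges of $E-F$ and is connected, so it lies inside one component. This gives maximality and the bijection.

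I expect the main obstacle to be the gluing fact together with the exact bookkeeping of the degenerate case. One subtlety in the gluing is that restricting a partition of the union to each piece can produce a trivial restriction on one side, and the strict inequality must still come out of the other side. The degenerate case is the identification of singleton superbricks, which is where the definition in \cite{JJbrick} has to be matched precisely.
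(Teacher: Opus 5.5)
The paper gives no proof of this lemma; it is quoted from Jackson and Jord\'an's brick-partition paper, so there is no argument to compare yours with. Judged on its own, your proof is sound apart from one step.

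Your caveat is right. Under the paper's stated definition (maximal $k$-tree-connected subgraph) the lemma is false. If $H$ is the union of $k$ edge-disjoint spanning trees on at least two vertices, every edge is a coloop of $\mathcal M_k(H)$, so $(V,E-F)$ has only singleton components, although $H$ itself is $k$-tree-connected. The paper's own use of the lemma in the proof of Theorem~\ref{thm:mainbbh} (applying Theorem~\ref{thm:cjw} to $G_S$, and the minimum degree bound) needs superbricks to be maximal \emph{highly} $k$-tree-connected subgraphs.

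Do drop the phrase equating these with the non-trivial components of $\mathcal M_k(H)$. For $k=1$, two cycles sharing a vertex form one maximal highly $1$-tree-connected subgraph but two components of the cycle matroid. This is harmless, because what you actually prove is the statement about components of $(V,E-F)$. Steps 1 and 2 and the final maximality argument are correct.

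The step that does not go through as written is the gluing. The circuits you glue will in general share edges. If you restrict a partition $\mathcal P$ of $V(A)\cup V(B)$ into $t\geq 2$ parts to $V(A)$ and to $V(B)$ separately and add the two crossing counts, edges common to $A$ and $B$ that cross $\mathcal P$ are counted twice. Taking $A=B$ shows this ``proves'' a false bound. The trivial-restriction issue you flag is not the real difficulty.

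Restrict asymmetrically instead:
\begin{itemize}
\item Let $\mathcal P_A$ be the restriction of $\mathcal P$ to $V(A)$, with $t_A$ parts.
\item Let $\mathcal Q$ be the partition of $V(B)$ obtained by merging all parts of $\mathcal P$ that meet $V(A)$ into a single part. This part is nonempty on $V(B)$ thanks to the common vertex, so $|\mathcal Q|=t-t_A+1$.
\item No edge of $A$ crosses $\mathcal Q$. Hence the edges of $A$ crossing $\mathcal P_A$ and the edges of $B$ crossing $\mathcal Q$ are disjoint sets of edges crossing $\mathcal P$.
\item Therefore $e_{A\cup B}(\mathcal P)\geq k(t_A-1)+k(t-t_A)+1$. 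The extra $1$ comes from $A$ if $t_A\geq 2$, and from $B$ if $t_A=1$ (then $|\mathcal Q|=t\geq 2$).
\end{itemize}
With this version of the gluing fact, your argument is a complete proof.
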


We are ready to prove the 
main result of this section
and confirm the above mentioned
conjecture of Connelly, Jord\'an, and Whiteley
\cite[Section 6.2]{CJW} on globally $d$-linked pairs, which has been unsolved 
even for $d=2$.
For a multigraph $H=(V,E)$ and $X\subseteq V$ let 
$B_X=\cup_{w\in X}V(B_w)$ be the union of the vertex sets of the corresponding bodies in $G_H$.

\begin{theorem}
\label{thm:mainbbh}
Let $H=(V,E)$ be a multigraph with $|V|\geq 2$ and $|E|\geq 2$ and let
$G_H$ be the body-bar graph induced by $H$. Let $u,v\in V(G_H)$ be a non-adjacent pair and
let $d\geq 1$.
Then $\{u,v\}$ is globally $d$-linked in $G_H$
if and only if 
there exists a $\binom{d+1}{2}$-superbrick $S$ of $H$ with $u,v\in B_{V(S)}$.
%
\end{theorem}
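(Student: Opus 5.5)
Write $k=\binom{d+1}{2}$ and prove the two directions separately.

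\emph{Sufficiency.} Suppose $u,v\in B_{V(S)}$ for a $k$-superbrick $S$. I would show that the subgraph of $G_H$ induced by $B_{V(S)}$ contains a globally $d$-rigid subgraph through $u$ and $v$, and then use that globally rigid subgraphs make every pair they contain globally linked. If $|V(S)|=1$, then $u,v$ lie in a single body, which is complete, so they are adjacent; this case does not arise. Otherwise $S$ is $k$-tree-connected. Any bar of $S$ that is a bridge of ${\cal M}_k(S)$ can be removed while keeping $k$-tree-connectivity of the pieces. I would instead argue via Lemma \ref{jjbridge}: the $k$-superbricks are the connected components of $(V,E-F)$, and inside a superbrick every edge lies in a circuit of ${\cal M}_k$. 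I would then check that a $k$-tree-connected graph with no ${\cal M}_k$-bridge is highly $k$-tree-connected. The reason is that equality $e({\cal P})=k(t-1)$ for a partition with $t\ge2$ forces the cross edges to be bridges of the matroid union. Applying Theorem \ref{thm:cjw} to the body-bar graph of $S$ (bodies realised as complete graphs on the vertices of $B_w$, i.e.\ the sub-body-bar graph plus possibly extra body vertices, which are harmless since bodies are complete) gives global $d$-rigidity. Hence $\{u,v\}$ is globally $d$-linked. Small degenerate cases, such as $|E(S)|<2$, are excluded because $k\ge1$ and $t=2$ force at least two edges when $k\ge2$. For $d=1$ the case $k=1$ needs two parallel edges, which holds.

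\emph{Necessity.} This is the main step, and I would mimic the proof of Theorem \ref{thm:main}, using Theorem \ref{lem:geo} as the reduction tool. Assume $u,v$ are in bodies of different superbricks, and argue by induction on $|V|+|E|$. First, add edges between bodies of the same superbrick, i.e.\ make each $B_{V(S)}$ complete, and add bars among different superbricks until $H$ is $k$-tree-connected with the inter-superbrick bars still being ${\cal M}_k$-bridges. Tay's theorem (Theorem \ref{thm:tay}) then makes $G_H$ $d$-rigid, and the bridges of ${\cal M}_k(H)$ correspond to $\cR_d$-bridges of $G_H$. Next, contract the superbrick structure into a tree-like multigraph whose edges are bridges. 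Counting, as in Claim \ref{claim}, with rank $k(|V|-1)$ split into superbrick ranks plus bridges, yields a superbrick $S$ (not containing the bodies of $u,v$) incident with few bridges.

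The reduction then removes one bar endpoint $y\in B_{V(S)}$ incident with a bridge $xy$. Its other neighbours form a complete set of size at least $d+1$, hence they are pairwise globally $d$-linked. After enlarging the complete block so that $\deg(y)\ge d+2$, Theorem \ref{lem:geo} transfers non-global-linkedness from $G-y$ to $G$. Repeating this strips off bridges until $S$ is a pendant complete piece. That piece can be deleted entirely, since a complete piece attached with no bridges would be disconnected; a component attached by bridges is handled vertex by vertex in the same way.

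The base case is where $u$ and $v$ are separated across a bridge cut in a small graph. There, non-global-linkedness can be witnessed by a flex or by reflecting one side, since the cut is not rigid.

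The main obstacle is the counting step guaranteeing a reducible superbrick avoiding $u$ and $v$, together with ensuring that the degree condition $\deg(y)\ge d+2$ in Theorem \ref{lem:geo} can be arranged. Single-vertex superbricks with few incident bars play the role of the vertices in $Z$ in Claim \ref{claim}. For these I would instead use that a body of size at most $d$ can be treated through 0-extensions (Lemma \ref{lem0ext}), generalised to $d$ dimensions in the obvious way.
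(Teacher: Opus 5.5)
\textbf{Sufficiency.} This matches the paper's argument. A superbrick $S$ with $|V(S)|\ge 2$ is highly $\binom{d+1}{2}$-tree-connected, Theorem~\ref{thm:cjw} makes $G_S$ globally $d$-rigid, and the remaining body vertices can then be added. You should say why the extra body vertices are harmless: each body of $G_S$ has $\deg_S(w)\ge\binom{d+1}{2}+1\ge d+1$ vertices. That bodies are complete is not enough on its own.

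\textbf{Necessity has a genuine gap.} Theorem~\ref{lem:geo} requires the graph containing $y$ to be $d$-rigid. Once you delete a bar endpoint $y$ whose bar is an $\mathcal{R}_d$-bridge, $G_H-y$ is no longer $d$-rigid. So you cannot invoke the theorem again to strip off bridges one after another. Restoring rigidity requires adding a new bar in place of the one you removed, so the graph does not get smaller. The augmented graph minus $y$ can be larger than the graph you started from, and your induction on $|V|+|E|$ is therefore not well-founded.

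The counting step you single out as the main obstacle is left unproved. It is also beside the point: in a body-bar graph every bar endpoint lies on exactly one bar, and its other neighbours form a clique. Every such vertex already satisfies the local hypotheses of Theorem~\ref{lem:geo}. What is actually missing is a reason why $\{u,v\}$ fails to be globally $d$-linked in $G_H-y$. Your base case (``witnessed by a flex \dots\ since the cut is not rigid'') gestures at this, but it is never reached by valid steps.

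\textbf{The missing idea.} The paper gets this from a single well-chosen $y$, with no induction and no counting.
\begin{itemize}
\item Augment once so that $H$ is $\binom{d+1}{2}$-tree-connected. Then $G_H$ is $d$-rigid by Theorem~\ref{thm:tay}, and every bar between different superbricks is an $\mathcal{R}_d$-bridge by Lemma~\ref{jjbridge} and Theorem~\ref{thm:tay}.
\item Take an $\mathcal{R}_d$-circuit $J$ of $G_H+uv$ containing $uv$. Since $J$ is $2$-edge-connected and $u,v$ lie in different sets $B_{V(C_i)}$, $J$ contains a second edge $xy$ joining different sets. This edge is such a bar, and you may label it so that $y\notin\{u,v\}$.
\item Because $xy$ and $uv$ lie on a common circuit, $G_H-xy+uv$ is $d$-rigid while $G_H-xy$ is not. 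So $\{u,v\}$ is not even $d$-linked in $G_H-xy$, hence not in its subgraph $G_H-y$. By a flex, it is therefore not globally $d$-linked in $G_H-y$.
\item A single application of Theorem~\ref{lem:geo} to $y$ then gives the result.
\end{itemize}
Your device of enlarging the body of $y$ by extra clique vertices to secure $\deg(y)\ge d+2$ is compatible with this argument.
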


\begin{proof}
%
Let $S$ be a $\binom{d+1}{2}$-superbrick of $H$ with $u,v\in B_{V(S)}$.
Since $u$ and $v$ are non-adjacent, we have $|V(S)|\geq 2$.
Theorem \ref{thm:cjw} implies that $G_{S}$, which is a body-bar subgraph of $G_H$,
is globally $d$-rigid.
This subgraph may properly intersect some bodies in $G_H$. However, by using
that
$S$ has minimum degree at least $\binom{d+1}{2}+1\geq d+1$, it follows that
these intersections have cardinality at least $d+1$. 
Thus $G_H[B_{V(S)}]$ is also globally $d$-rigid.
This proves sufficiency, and shows that the
$\binom{d+1}{2}$-superbricks of $H$ induce a partition of $G_H$
into globally $d$-rigid subgraphs.

Let us consider necessity. Our goal is to show that if the vertices of a non-adjacent pair $\{u,v\}$ of $G_H$ belong to
different members of this partition,
then they are not globally $d$-linked.
By adding edges to $H$ without introducing new $\binom{d+1}{2}$-superbricks (and hence adding new vertices and edges to $G_H$)
we may assume that $H$ is $\binom{d+1}{2}$-tree-connected, and hence $G_H$ is $d$-rigid
by Theorem \ref{thm:tay}. 
Let $C_1,C_2,\dots, C_q$ be the ${d+1\choose 2}$-superbricks
of $H$,
and let $B_i=B_{V(C_i)}$ for $1\leq i\leq q$.
Then
${\cal B}=\{B_1,B_2,...,B_q\}$ is a partition of $V(G_H)$,
and by our assumption, $u$ and $v$ belong to different
members of ${\cal B}$.

Since $G_H$ is $d$-rigid
and $uv\notin E(G_H)$, $G_H+uv$ contains an ${\cal R}_d$-circuit $J$ with $uv\in E(J)$.
Since $J$ is 2-edge-connected, there is an edge $xy\in E(J)$,
different from $uv$,
such that $x$ and $y$ belong to different 
members of ${\cal B}$.
By symmetry we may assume that $y\notin \{u,v\}$.
Hence, by the construction of the body-bar graph, $H$ has a 
unique edge $f\in E$ corresponding to $xy$,
and $f\notin E(C_i)$ for $1\leq i\leq q$.
Therefore, $f$ is a bridge in ${\cal M}_{d+1 \choose 2}(H)$
by Lemma \ref{jjbridge}, hence
$xy$ is an ${\cal R}_d$-bridge in $G_H$ by Theorem \ref{thm:tay}. It follows that $G_H-xy$ is not $d$-rigid.

Furthermore, the existence of the ${\cal R}_d$-circuit
$J$ with $xy,uv\in E(J)$ implies that
$G_H-xy+uv$ is $d$-rigid. By using that $G_H-xy$ is not $d$-rigid, we obtain that $\{u,v\}$ is not $d$-linked in $G_H-xy$.
Thus $\{u,v\}$ is not globally $d$-linked in $G_H-y$.

The body-bar structure and the $d$-rigidity of $G_H$, 
together with Theorem \ref{thm:tay}, imply that
$\deg_{G_H}(y)\geq d+2$ and 
the vertices in
$N_{G_H}(y)-\{x\}$ induce a complete subgraph in $G_H$.
%
So the conditions of Theorem \ref{lem:geo} are satisfied, and
we obtain that $\{u,v\}$ is not globally $d$-linked in $G_H$.
This completes the proof.
\end{proof}


We have the following corollary, which gives affirmative answers to 
two conjectures from \cite{G} and
\cite{GJ}, respectively, mentioned in the Introduction, 
in the special case of body-bar graphs. 

\begin{corollary}
\label{coro}
Let $H=(V,E)$ be a multigraph with $|V|\geq 2$ and $|E|\geq 2$, 
and let $G_H$ be the body-bar graph induced by $H$.
Let $u,v\in V(G_H)$ and let $d\geq 1$.
Then\\
(a) $X\subseteq V(G_H)$ is a globally $d$-linked cluster of $G_H$
if and only if 
$X=B_{V(S)}$ for some 
$\binom{d+1}{2}$-superbrick $S$ of $H$, or $X$ is the
end-vertex pair of an edge of $G_H$ not induced by such a cluster,\\
(b) $\{u,v\}$ is globally $d$-linked in $G_H$ if and only if 
either $uv\in E(G_H)$ or there is
an ${\cal R}_d$-connected subgraph $G'$ of $G_H$ with $\kappa_{G'}(u,v)\geq d+1$, and\\
(c) $\{u,v\}$ is $d$-stress-linked in $G_H$ if and only if
$\{u,v\}$ is globally $d$-linked in $G_H$.
\end{corollary}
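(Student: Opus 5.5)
Each of the three parts reduces to Theorem \ref{thm:mainbbh} together with the structure it reveals: the sets $B_{V(S)}$, for the $\binom{d+1}{2}$-superbricks $S$ of $H$, partition $V(G_H)$, and each induces a globally $d$-rigid subgraph $G_H[B_{V(S)}]$.

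\textbf{Part (a).} Two vertices are globally $d$-linked exactly when they are adjacent or lie in a common $B_{V(S)}$. Hence ${\rm glc}_d(G_H)$ is the disjoint union of the complete graphs on the sets $B_{V(S)}$, together with the edges of $G_H$ joining distinct parts. Since the $B_{V(S)}$ partition the vertex set, every maximal clique is of one of two kinds. Either it is one of the sets $B_{V(S)}$, or it meets two parts. A clique meeting two parts can only use edges of $G_H$ between parts. These are bars, and they are pairwise disjoint. So the clique contains at most one vertex from each part, and its cross edges must be bars between the distinct bodies of a triangle. A triangle of bars would need three bars sharing endpoints pairwise, which disjointness forbids. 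So such a clique is just the end-vertex pair of one bar. This bar is maximal as a clique unless both its ends lie in one part, i.e.\ unless it is induced by some $B_{V(S)}$.

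One degenerate case needs checking: a superbrick that is a single vertex $w$. Then $B_{V(S)}=B_w$ is itself a complete graph, so it is still a cluster. This matches the statement, since the theorem's ``non-adjacent'' case never arises inside $B_w$.

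\textbf{Part (b).} Here we use Theorem \ref{thm:mainbbh} in both directions.

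For sufficiency, suppose $G'$ is $\mathcal{R}_d$-connected with $\kappa_{G'}(u,v)\ge d+1$ and $u,v$ non-adjacent. I would argue through $\mathcal{R}_d$-components. Let $K$ be the $\mathcal{R}_d$-component of $G_H$ containing $G'$. From the proof of Theorem \ref{thm:mainbbh}, every edge between distinct parts corresponds to a bridge of $\mathcal{M}_{\binom{d+1}{2}}(H)$, and hence is an $\mathcal{R}_d$-bridge of $G_H$. So $K$ is trivial or lies inside a single $G_H[B_{V(S)}]$. A single edge has $\kappa=1<d+1$, so $K$ is non-trivial and lies inside one part. Thus $u,v$ lie in a common part and are globally linked.

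For necessity, take $G'=G_H[B_{V(S)}]$. It is globally $d$-rigid and, when $|V(S)|\ge2$, has at least $d+2$ vertices. By Hendrickson's Theorem \ref{theorem:hendrickson} it is $(d+1)$-connected and redundantly rigid. I then need $\mathcal{R}_d$-connectivity. Redundant rigidity alone gives this only for $d\le 2$ in general, so here I would use the body-bar structure. Since $S$ is highly tree-connected, every edge of $S$ lies in a circuit of $\mathcal{M}_{\binom{d+1}{2}}(S)$. Tay's correspondence then transfers connectivity of that matroid to $\mathcal{R}_d(G_S)$, with the bodies being complete and linking their edges. I expect this transfer to be the main obstacle; citing \cite{CJW}, where global rigidity of $G_S$ is derived, should supply it.

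\textbf{Part (c).} This follows from (b) combined with Garamv\"olgyi's results \cite{G}. Stress-linked pairs are globally linked, so one direction is immediate. For the converse, a pair in a common part lies in the globally $d$-rigid subgraph $G_H[B_{V(S)}]$. By \cite[Theorem 4.7]{G}, every pair of vertices of that subgraph is stress-linked in it. Stress-linkedness is monotone under adding edges, so the pair is stress-linked in $G_H$. I would verify this monotonicity from the definition in \cite{G}; this is the other point needing care.
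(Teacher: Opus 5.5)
\textbf{The gap is in the forward half of (b).} That direction rests on $G'=G_H[B_{V(S)}]$ being $\mathcal{R}_d$-connected, and you leave this unproved.

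The mechanism you sketch does not work as stated. High $\binom{d+1}{2}$-tree-connectivity of $S$ only says that $\mathcal{M}_{\binom{d+1}{2}}(S)$ has no bridges; it does not say the matroid is connected. For example, two highly $k$-tree-connected multigraphs glued at a single vertex are again highly $k$-tree-connected, but their $\mathcal{M}_k$ is a direct sum. So there is no matroid connectivity to ``transfer''. Also, \cite{CJW} gives global rigidity of $G_S$, not $\mathcal{R}_d$-connectivity.

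The paper closes this step in one line by citing \cite{GGJ}: globally $d$-rigid graphs are $\mathcal{R}_d$-connected. Together with $(d+1)$-connectivity from Theorem \ref{theorem:hendrickson}, this gives the required subgraph.

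Your body-bar route can also be completed directly:
\begin{itemize}
\item For $d=1$, 2-connectivity already is $\mathcal{R}_1$-connectivity.
\item For $d\ge 2$, each body of $G'$ is complete on at least $\binom{d+1}{2}+1\ge d+2$ vertices, hence $\mathcal{R}_d$-connected.
\item Take a bar $xy$ of $G'$ coming from $f\in E(S)$. Then $S-f$ is still $\binom{d+1}{2}$-tree-connected. Theorem \ref{thm:tay}, followed by re-adding the missing body vertices (each has at least $d$ neighbours), shows that $G'-xy$ is $d$-rigid. Hence $xy$ lies in an $\mathcal{R}_d$-circuit.
\item That circuit contains edges of both incident bodies. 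This is because $\mathcal{R}_d$-circuits have minimum degree at least $d+1$, while $x$ and $y$ each carry only one bar.
\item Connectivity of $S$ then merges all edges of $G'$ into one component.
\end{itemize}

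\textbf{The rest is sound.} Parts (a) and (c) follow the paper. Part (a) is read off from Theorem \ref{thm:mainbbh}. Part (c) uses \cite{G} together with the fact that stress-linkedness passes to supergraphs, which is \cite[Lemma 4.9]{G}.

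Your converse of (b) is correct, and it is a direction the paper's proof does not spell out. A non-trivial $\mathcal{R}_d$-connected $G'$ lies in one $\mathcal{R}_d$-component. That component avoids the cross bars, since they are $\mathcal{R}_d$-bridges, and so it stays inside one part. One point to add: the bridge property is proved in Theorem \ref{thm:mainbbh} for the augmented graph. It descends to $G_H$ because a subgraph has no new circuits.

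\textbf{One slip in (a).} Suppose a single-vertex superbrick $w$ has $\deg_H(w)=1$. Then $B_w=\{x\}$ lies in the clique formed by the bar at $x$, so it is not a cluster. The parts are maximal cliques only when they have at least two vertices. This is a degeneracy of the statement itself, which the paper also ignores.
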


\begin{proof} (a) follows directly from Theorem \ref{thm:mainbbh}.

(b) Let us consider a non-adjacent globally $d$-linked pair $\{u,v\}$.
It belongs to a non-trivial globally $d$-linked cluster of $G_H$. By (a) 
this cluster induces 
a globally $d$-rigid subgraph $G'$ of $G_H$. Hence $G'$ is
$(d+1)$-connected. By a result of 
\cite{GGJ} $G'$ is ${\cal R}_d$-connected. Thus (b) holds.

(c) Necessity was proved in \cite[Theorem 4.2]{G}. Suppose that $\{u,v\}$ is a non-adjacent
globally $d$-linked pair in $G_H$. 
It follows from (the proof of) (b) that there is a globally $d$-rigid subgraph $G'$ of $G_H$
with $u,v\in V(G')$. The result in \cite[Proposition 4.3]{G} implies that a graph is globally $d$-rigid
if and only if each pair of its vertices is $d$-stress-linked.
Thus $\{u,v\}$ is $d$-stress-linked in $G'$. By \cite[Lemma 4.9]{G} it is also
$d$-stress-linked in $G_H$.
%
\end{proof}

Corollary \ref{coro}(a)
implies that
the non-trivial globally $d$-linked clusters of $G_H$ are determined 
by an appropriate partition of the vertex set of $H$.
We conjecture that this property holds in body-hinge
graphs, too. The global $d$-rigidity of these graphs, 
which can be described as collections of rigid bodies in which 
some pairs of bodies share $d-1$ vertices, has been
characterized in \cite{JKT}.
Given a multigraph $H$ and $k\geq 1$, the graph $kH$
is obtained from $H$ by replacing every edge $e$ by $k$
copies of $e$.

\begin{conjecture}
Let $G_H$ be the body-hinge graph induced by multigraph $H$
and let $d\geq 3$. Then
a pair $\{u,v\}$ is globally $d$-linked in 
$G_H$ if and only if there is a 
$\binom{d+1}{2}$-superbrick $S$ of $(\binom{d+1}{2}-1)H$
which contains the vertices of the bodies of $u$ and $v$.
\end{conjecture}

\section{Concluding remarks}
\label{sec:con}

\subsection{Algorithms}

The combinatorial characterizations given in
Theorems \ref{thm:MAIN} and \ref{thm:mainbbh} imply
that the globally 2-linked pairs in a graph $G$ and the
globally $d$-linked pairs in a body-bar graph $G_H$ can be found in
polynomial time. These corollaries follow from the fact that
the ${\cal R}_2$-components, the  pairs $\{u,v\}$ of vertices with $\kappa_G(u,v)\geq 3$,
and the $k$-superbricks of a multigraph can be found efficiently, see \cite{JJbrick,Jmemoirs}.

Another algorithmic implication is concerned with the family of
$d$-joined graphs, defined in \cite{GJ}.
A graph $G=(V,E)$ is said to be 
{\it $d$-joined}, for some $d\geq 1$, if
$G$ is $d$-rigid, and for all $u,v\in V$ the pair $\{u,v\}$
is globally $d$-linked in $G$ if and only if $uv\in E$ or 
$\kappa_G(u,v)\geq d+1$. For example, ${\cal R}_2$-connected graphs are
$2$-joined by Theorem \ref{mconn}. It was pointed out in \cite{GJ} that $d$-joined
graphs have various interesting properties, but it remained an open
problem to develop an algorithm for testing whether a graph is $d$-joined, for $d\geq 2$. Theorem \ref{thm:MAIN} gives rise to such an algorithm for $d=2$. It also
provides an affirmative answer to the ($2$-dimensional version of) \cite[Conjecture 5.7]{GJ},
which has further algorithmic consequences, see \cite{GJ}.

\subsection{Uniquely localizable vertices}

The theory of globally rigid graphs and globally linked pairs 
has several applications, for example, in 
localization
problems of wireless sensor networks, see, e.g., \cite{JJchapter}.
The following definition in \cite{JJS} was motivated by this context.

Let us assume $d=2$ and let
$(G,p)$ be a generic framework with a designated set $P\subseteq V(G)$
of vertices. We say that a vertex $v\in V(G)$ is {\it uniquely localizable}
with respect to $P$
if whenever
$(G,q)$ is equivalent to $(G,p)$ and
$p(b)=q(b)$ for all vertices $b\in P$, then we also have $p(v)=q(v)$.
We call a vertex $v$ {\it uniquely localizable} in graph $G$
with respect to $P\subseteq V(G)$ if $v$ is uniquely localizable with respect to $P$ in all generic frameworks $(G,p)$.
Let $G+K(P)$ denote the graph obtained from $G$ by adding all edges $bb'$
for which $bb'\notin E(G)$ and $b,b'\in P$.

Theorem \ref{thm:MAIN} implies the following characterization of uniquely localizable vertices, confirming \cite[Conjecture 6.3]{JJS}.

\begin{theorem}
    Let $G=(V,E)$ be a graph, $P\subseteq V$, $v\in V-P$. Then $v$ is uniquely localizable in $G$ with respect to $P$ in $\R^2$ if and only if $|P|\geq 3$ and
    there is an ${\cal R}_2$-component $H$ in $G+K(P)$ with $P+v\subseteq V(H)$
    and $\kappa_H(v,b)\geq 3$ for all $b\in P$.
\end{theorem}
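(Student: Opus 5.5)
The plan is to reduce unique localizability to global linkedness in the augmented graph $G'=G+K(P)$ and then apply Theorem \ref{thm:MAIN}. The key observation is that for a generic $(G,p)$, $v$ is uniquely localizable with respect to $P$ if and only if, in $(G',p)$, every pair $\{v,b\}$ with $b\in P$ is globally linked and $|P|\geq 3$. Indeed, an equivalent $(G',q)$ has $P$ congruent to $p|_P$; after an isometry we get $q|_P=p|_P$. When $|P|\geq 3$ and $p$ is generic, $P$ is affinely spanning, so the position of $v$ is determined by its distances to three affinely independent points of $P$. Conversely, if $p(b)=q(b)$ for all $b\in P$ is forced to give $p(v)=q(v)$, then the distances from $v$ to $P$ are preserved. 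If $|P|\leq 2$, reflecting in the line through $P$ (or rotating about the single point) moves a generic $v$, so $|P|\geq 3$ is necessary. The equivalences between frameworks of $G$ fixing $P$ and frameworks of $G'$ (up to isometry) are routine, since adding the edges of $K(P)$ exactly records that $P$ stays congruent.

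So unique localizability is equivalent to: $|P|\geq 3$ and $\{v,b\}$ is globally 2-linked in $G'$ for all $b\in P$. For each $b$, Theorem \ref{thm:MAIN}(d) gives either $vb\in E(G')$ or an ${\cal R}_2$-connected subgraph $H_b$ of $G'$ containing $v,b$ with $\kappa_{H_b}(v,b)\geq 3$. Since ${\cal R}_2$-connected subgraphs lie inside ${\cal R}_2$-components, we may take $H_b$ to be the ${\cal R}_2$-component containing it, and $\kappa$ only increases.

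The remaining combinatorial step, which I expect to be the main obstacle, is to show that all these components coincide and handle the adjacent case. Since $|P|\geq 3$, $K(P)$ is a complete graph on at least three vertices. If $|P|\geq 4$, $K(P)$ is ${\cal R}_2$-connected and lies in a single component $H$. If $|P|=3$, I would argue that a triangle together with globally linked pairs to $v$ forces a common component. For sufficiency, the given $H$ supplies (d) for each pair $\{v,b\}$.

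For necessity, first suppose $v$ is adjacent to some $b$. I would use that $\{v,b'\}$ for the other $b'\in P$ are globally linked. By Lemma \ref{newcomps}, adding globally linked edges $vb'$ to $G'$ does not change component vertex sets. Then in $G'+\{vb:b\in P\}$ the vertex $v$ together with $P$ spans $K(P+v)$ minus nothing, which is ${\cal R}_2$-connected for $|P|\geq3$. Hence $P+v$ lies in one component $H$ of the closure, and thus by Lemma \ref{newcomps} in one component $H$ of $G'$.

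It then remains to check $\kappa_H(v,b)\geq 3$ even when $vb$ was an edge. My plan here is to use that $v$ has paths to the other members of $P$ inside $H$, with $\kappa\geq3$ where nonadjacent, together with the clique on $P$, giving three disjoint $v$–$b$ paths by a Menger/fan argument. I would also need to handle carefully the degenerate case where $v$ is adjacent to all of $P$.
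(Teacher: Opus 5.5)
Your proposal is correct and follows the route the paper intends when it says the result follows from Theorem \ref{thm:MAIN}: for $|P|\geq 3$, reduce unique localizability to global 2-linkedness of every pair $\{v,b\}$, $b\in P$, in $G+K(P)$, then use Lemma \ref{newcomps} and the $\mathcal{R}_2$-connectivity of $K(P+v)$ to put $P+v$ into a single $\mathcal{R}_2$-component $H$. This closure argument works whether or not $v$ is adjacent to members of $P$, so your separate $|P|=3$ remark is not needed.

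The two loose ends you flag close quickly:
\begin{itemize}
\item For non-adjacent $b$, the component supplied by Theorem \ref{thm:main} shares both $v$ and $b$ with $H$, so it equals $H$ by Lemma \ref{mconnbasic}(b).
\item For adjacent $b$, suppose a set $S$ of at most one vertex separated $v$ from $b$ in $H-vb$. Any $b'\in P-(S\cup\{b\})$ is adjacent to $b$ in the induced subgraph $H$, so it lies on $b$'s side. Then either $vb'\in E(H)$, which is impossible since it would join the two sides, or $S\cup\{b\}$ separates $v$ from $b'$ in $H$, contradicting $\kappa_H(v,b')\geq 3$. Hence $\kappa_H(v,b)\geq 3$.
\end{itemize}
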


\section{Acknowledgements}

TJ was 
supported by  RIMS (Research Institute for Mathematical Sciences), Kyoto University,
the National Research, Development and Innovation Office
of Hungary, grant no. Advanced 152786, 
and the MTA-ELTE Momentum Matroid Optimization Research Group. 
This work was supported by the Japan Science and Technology Agency (JST) as part of Adopting Sustainable Partnerships for Innovative Research Ecosystem (ASPIRE), Grant Number JPMJAP2520.

\end{document}